\numberwithin{equation}{section}
\numberwithin{equation}{section}
\theoremstyle{plain}
\newtheorem{theorem}[equation]{Theorem}
\newtheorem{conjecture}[equation]{Conjecture}
\newtheorem{lemma}[equation]{Lemma}
\newtheorem{corollary}[equation]{Corollary}
\newtheorem{proposition}[equation]{Proposition}
\theoremstyle{definition}
\newtheorem{definition}[equation]{Definition}
\newtheorem{example}[equation]{Example}
\newtheorem{remark}[equation]{Remark}
\newtheorem{nonsec}[equation]{}
\theoremstyle{remark}
\newcommand{\R}{\mathbb{R}}
\newcommand{\C}{\mathbb{C}}
\newcommand{\B}{\mathbb{B}}
\newcommand{\M}{\mathsf{M}}
\newcommand{\uhp}{\mathbb{H}}
\font\fFt=eusm10 
\font\fFa=eusm7  
\font\fFp=eusm5  
\def\K{\mathchoice
{\hbox{\,\fFt K}}
{\hbox{\,\fFt K}}
{\hbox{\,\fFa K}}
{\hbox{\,\fFp K}}}
\newcounter{alphabet}
\newcounter{minutes}\setcounter{minutes}{\time}
\newcounter{hours}\setcounter{hours}{\time}
\begin{document}
\bibliographystyle{amsplain}
\title
{
M\"obius metric in sector domains
}

\def\thefootnote{}
\footnotetext{
\texttt{\tiny File:~\jobname .tex,
          printed: \number\year-\number\month-\number\day,
          \thehours.\ifnum\theminutes<10{0}\fi\theminutes}
}
\makeatletter\def\thefootnote{\@arabic\c@footnote}\makeatother

\author[O. Rainio]{Oona Rainio}
\author[M. Vuorinen]{Matti Vuorinen}

\keywords{Hyperbolic geometry, hyperbolic metric, intrinsic geometry, M\"obius metric, quasiregular mappings, triangular ratio metric}
\subjclass[2010]{Primary 51M10; Secondary 30C62}
\begin{abstract}
The M\"obius metric $\delta_G$ is studied in the cases where its domain $G$ is an open sector of the complex plane. We introduce upper and lower bounds for this metric in terms of the hyperbolic metric and the angle of the sector, and then use these results to find bounds for the distortion of the M\"obius metric under quasiregular mappings defined in sector domains. Furthermore, we numerically study the M\"obius metric and its connection to the hyperbolic metric in polygon domains.
\end{abstract}
\maketitle

\textbf{Author information.}

\noindent Oona Rainio$^{(1)}$ email: \texttt{ormrai@utu.fi} ORCID: 0000-0002-7775-7656\\
Matti Vuorinen$^{(1)}$ email: \texttt{vuorinen@utu.fi} ORCID: 0000-0002-1734-8228\\
(1): Department of Mathematics and Statistics, University of Turku, FI-20014 Turku, Finland

\textbf{Funding.} Oona Rainio's research was funded by the University of Turku Graduate School UTUGS.

\textbf{Data availability statement.} Not applicable, no new data was generated.

\textbf{Conflict of interest statement.} No conflict of interest.

\section{Introduction}

One of the most important concepts in the geometric function theory is the \emph{intrinsic distance}. It means that, given two points in a domain, we do not only consider how close these points are to each other but also how they are located with respect to the boundary of the domain. In order to measure these kinds of distances, we need to use suitable intrinsic or hyperbolic type metrics, which have been recently studied, for instance, in \cite{chkv, hkvbook, h18, hvz, himps, fss, inm, sqm}.

In this article, we focus on one of these intrinsic metrics, which is defined as follows: For any domain $G\subset\overline{\R}^n=\R^n\cup\{\infty\}$ whose complement $(\overline{\R}^n\backslash G)$ contains at least two points, let the \emph{M\"obius metric} be the function $\delta_G:G\times G\to[0,\infty),$
\begin{align}\label{def_delta}
\delta_G(x,y)=\sup_{a,b\in\partial G}\log(1+|a,x,b,y|),    
\end{align}
where $|a,x,b,y|$ is the cross-ratio defined in \eqref{def_crossratio}. This metric was first introduced in \cite[pp. 115-116]{cgqm} and then later studied more extensively by P. Seittenranta in his PhD thesis \cite[Def. 1.1, p. 511]{S99}, which is why it is sometimes also referred to as Seittenranta's metric.

Due to the M\"obius invariance of the cross-ratio, the distances defined with the M\"obius metric are preserved under M\"obius transformations, which is one of the most useful properties of this metric. However, there are still numerous open questions concerning this metric. For instance, while it is known that the value of this metric is equal to that of the hyperbolic metric $\rho$ or the distance ratio metric $j$ in some special cases, see Theorems \ref{thm_seitminvariant} and \ref{thm_deltaj}, the M\"obius metric is studied very little in other kinds of domains. To fill this gap, our aim here is to find more information about the M\"obius metric in the cases where the domain $G$ is either an open sector of the complex plane or a polygon.

The main result of this article is as follows:

\begin{theorem}\label{thm_finalfordeltaS}
For all points $x,y$ in an open sector $S_\theta$ with an angle $0<\theta<2\pi$, the following inequalities hold:
\begin{align*}
(1)\quad&\rho_{S_\theta}(x,y)\leq\delta_{S_\theta}(x,y)\leq\min\left\{2,\left(\frac{\pi\sin(\theta\slash2)}{\theta}\right)^2\right\}\rho_{S_\theta}(x,y),\quad\text{if}\quad\theta<\pi,\\
(2)\quad&\delta_{S_\theta}(x,y)=\rho_{S_\theta}(x,y),\quad\text{if}\quad\theta=\pi,\\
(3)\quad&\max\left\{2{\rm arth}\frac{{\rm th}(\rho_{S_\theta}(x,y)\slash2)}{2},
\left(\frac{\pi\sin(\theta\slash2)}{\theta}\right)^2\rho_{S_\theta}(x,y)\right\}
\leq\delta_{S_\theta}(x,y)
\leq4\psi,\quad\text{if}\quad\theta>\pi,\\
&\text{where}\\
&\psi=
\begin{cases}
\min\left\{\rho_{S_\theta}(x,y),{\rm arth}\left((\theta\slash\pi){\rm th}(\rho_{S_\theta}(x,y)\slash2)\right)\right\},
\quad\text{if}\quad(\theta\slash\pi){\rm th}(\rho_{S_\theta}(x,y)\slash2)<1,\\
\rho_{S_\theta}(x,y)\quad\text{otherwise}.
\end{cases}
\end{align*}
\end{theorem}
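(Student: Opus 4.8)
The plan is to treat the three regimes separately, with the half-plane as the pivot. Case (2) is immediate: $S_\pi$ is a half-plane, hence a Möbius image of a ball, so Theorem \ref{thm_seitminvariant} gives $\delta_{S_\pi}=\rho_{S_\pi}$. For (1) and (3) I would first normalize: place the vertex of $S_\theta$ at the origin and introduce the conformal power map $f(z)=z^{\pi/\theta}\colon S_\theta\to\uhp$, which sends the two edges of $S_\theta$ onto the two halves of $\partial\uhp=\R$. Since the hyperbolic metric is a conformal invariant, $\rho_{S_\theta}(x,y)=\rho_{\uhp}(f(x),f(y))$, and I would pass to the pseudo-hyperbolic form $\mathrm{th}\bigl(\rho_{\uhp}(X,Y)/2\bigr)=|(X-Y)/(X-\overline Y)|$; this is where the $\mathrm{th}$--$\mathrm{arth}$ expressions in (1) and (3) come from. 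Finally, the only Möbius self-maps of $S_\theta$ (for $\theta\ne\pi$) are the dilations $z\mapsto\mu z$, $\mu>0$, so by the Möbius invariance of $\delta_{S_\theta}$ I would normalize one point, say $|x|=1$, before estimating.

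The structural backbone is two general facts about $\delta_G$. First, the supremum in \eqref{def_delta} may be taken over all $a,b$ in the closed complement $\overline{\R}^2\setminus G$: for fixed $b$ and $x$ the map $a\mapsto|a-b|/|a-x|$ has no critical point away from $x$, so it is maximized on the boundary of any closed set avoiding $x$; consequently $\delta_G$ is \emph{monotone decreasing} in $G$. Second, estimating $|a-b|\le|a-x|+|x-y|+|y-b|$ gives $1+|a,x,b,y|\le\bigl(1+|x-y|/\min\{d(x),d(y)\}\bigr)^2$, hence $\delta_G\le 2j_G$, while $j_G\le\delta_G$ always. For $\theta<\pi$ the sector is convex, so $j_{S_\theta}\le\rho_{S_\theta}$ and therefore $\delta_{S_\theta}\le 2\rho_{S_\theta}$; the companion bound $\rho_{S_\theta}\le\delta_{S_\theta}$ I would get by taking $a,b$ to be the $f$-preimages of the two endpoints of the hyperbolic geodesic of $f(x),f(y)$ in $\uhp$ (so that $|a,x,b,y|$ is compared with $e^{\rho_{\uhp}}-1$) and checking that the cross ratio does not decrease under $f^{-1}=w\mapsto w^{\theta/\pi}$ when $\theta/\pi<1$. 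For $\theta>\pi$ the sector contains a half-plane ($\uhp\subset S_\theta$), so monotonicity still yields the coarse bound $\delta_{S_\theta}\le\rho_{\uhp}$, but the quantitative bounds in (3) again come from boundary pairs placed on the two edges.

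The remaining constants come from the explicit comparison through $f$. Pulling back $\lambda_{\uhp}(w)=1/\mathrm{Im}\,w$ gives the sector density $\lambda_{S_\theta}(re^{i\varphi})=\pi/\bigl(\theta r\sin(\pi\varphi/\theta)\bigr)$, and a short computation shows that $d(z,\partial S_\theta)\,\lambda_{S_\theta}(z)$ runs over $[1,\kappa(\theta)]$ for $\theta<\pi$ and over $[\kappa(\theta),1]$ for $\theta>\pi$, where $\kappa(\theta):=\pi\sin(\theta/2)/\theta$ and the extremum is attained on the bisecting ray. To obtain $\delta_{S_\theta}\le\kappa(\theta)^2\rho_{S_\theta}$ (and, for $\theta>\pi$, $\delta_{S_\theta}\le 4\,\mathrm{arth}\bigl(\tfrac{\theta}{\pi}\mathrm{th}(\rho_{S_\theta}/2)\bigr)$ together with the matching lower bounds), I would estimate the Euclidean cross ratio with $a,b$ on the edges by transporting the four points by $f$ into $\uhp$, where the supremum over $\partial\uhp$ equals $e^{\rho_{\uhp}}-1$; because the cross ratio couples one ratio depending on $x$ with one depending on $y$, the distortion of $f$ enters as the \emph{product} of the one-sided density ratios at $x$ and at $y$, which is why $\kappa(\theta)$ appears squared, and the numerical factors $2$ and $4$ trace back to the exponent $\theta/\pi<2$ of $f^{-1}$ and to the exponent $2$ already present in $\delta_G\le 2j_G$.

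The main obstacle is precisely this last step. Because $\partial S_\theta$ is a union of two rays rather than a line, $\sup_{a,b}|a,x,b,y|$ has no closed form, and even after the normalization $|x|=1$ one still faces a genuine three-parameter optimization interleaved with the non-Möbius change of variables $f$; identifying the extremal configuration, tracking its dependence on whether $\theta\lessgtr\pi$, and translating the optimal Euclidean cross ratio back into $\rho_{S_\theta}$ are where essentially all the effort goes. Recovering each constant sharply — in particular the transition $\min\{2,\kappa(\theta)^2\}$ in (1) and the four matching $\mathrm{th}/\mathrm{arth}$ bounds in (3) — is delicate, and it would not be surprising if one or two of the inequalities in (3) end up proved only with the stated constant $4$ rather than in sharp form.
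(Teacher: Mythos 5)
Your outline is not a proof: the steps that actually carry Theorem \ref{thm_finalfordeltaS} are exactly the ones you defer, and the one concrete mechanism you do propose for the key lower bound fails. For $\theta<\pi$ you suggest obtaining $\rho_{S_\theta}\le\delta_{S_\theta}$ by taking $a,b$ to be the $f^{-1}$-images of the endpoints of the hyperbolic geodesic through $f(x),f(y)$ and ``checking that the cross ratio does not decrease under $w\mapsto w^{\theta/\pi}$.'' That check is false. Take $x=r_1e^{i\theta/2}$, $y=r_2e^{i\theta/2}$ with $0<r_1<r_2$: then $f(x),f(y)$ lie on the positive imaginary axis, the geodesic endpoints are $0$ and $\infty$, their preimages are again $0,\infty$, and
\begin{align*}
\log\bigl(1+|0,x,\infty,y|\bigr)=\log\frac{r_2}{r_1}=\frac{\theta}{\pi}\,\rho_{S_\theta}(x,y)<\rho_{S_\theta}(x,y).
\end{align*}
So this boundary pair never certifies $\rho\le\delta$; the supremum must be taken over points on the two edges, and identifying it is precisely the hard part. (A small side error: the Möbius self-maps of $S_\theta$ are not only dilations, e.g.\ $z\mapsto e^{i\theta}/z$ preserves $S_\theta$; this does not hurt your normalization, but the classification claim is wrong.) Likewise, your explanation of the constant $(\pi\sin(\theta/2)/\theta)^2$ as a ``product of one-sided density ratios'' is a heuristic, not a derivation, and for part (3) you give no route at all to the specific bounds $2\,\mathrm{arth}\bigl(\mathrm{th}(\rho/2)/2\bigr)$, $(\pi\sin(\theta/2)/\theta)^2\rho$ and $4\,\mathrm{arth}\bigl((\theta/\pi)\mathrm{th}(\rho/2)\bigr)$.

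What the paper does at the point where your plan stops: for $|x|=|y|$ it determines the extremal boundary pair exactly when $\theta\le\pi$ (Theorem \ref{thm_deltaPointslessthanpi}, proved by mapping $S_\theta$ onto a lens-shaped domain and reducing to the disk via Proposition \ref{prop_abinSH}), yielding the explicit formula of Corollary \ref{cor_deltaInS}; it then reduces an arbitrary pair to the symmetric configuration $x=re^{(1-k)\theta i/2}$, $y=re^{(1+k)\theta i/2}$ by a conformal self-map of the sector that preserves $\rho$ (Lemmas \ref{lem_g}--\ref{lem_confmappingtoH}, Corollary \ref{cor_Qwlg}); and it proves $1\le Q(k,\theta)\le(\pi\sin(\theta/2)/\theta)^2$ for $\theta<\pi$ (reversed for $\theta>\pi$) by the monotonicity analysis of Proposition \ref{prop_triginc} and Theorem \ref{thm_quoQlimits}. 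The remaining constants in (1) and (3) do not come from edge-point optimization but from the triangular ratio metric comparison in the sector (Theorem \ref{thm_rhos} combined with Corollary \ref{cor_deltasj}, giving Corollary \ref{cor_rhodeltaWithS}) and from the uniformity/quasihyperbolic chain $\delta\le 2j\le 2k\le 4\rho$ (Corollaries \ref{cor_deltauni} and \ref{cor_rhodeltaSuni}). Your observation that $\delta\le 2j\le 2\rho$ in the convex case is fine and parallels the paper's bound $2$, and case (2) is correct, but without the extremal-pair identification, the symmetrization step, and the $Q(k,\theta)$ estimates (or some substitute for them), the stated inequalities in (1) and (3) remain unproved.
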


The structure of this article is as follows. First, in Section \ref{sct3}, we combine some already known inequalities to create some initial bounds for the M\"obius metric in a general domain. Then, in Section \ref{sct4}, we study the M\"obius metric defined in an open sector by showing how the supremum of the cross-ratio in its definition \eqref{def_delta} can be found. These results will be used in Section \ref{sct5}, where we introduce bounds for the M\"obius metric in terms of the hyperbolic metric in a sector and prove Theorem \ref{thm_finalfordeltaS}. In Section \ref{sct6}, we 
apply these results and prove bounds for the distortion of the M\"obius metric under quasiregular mappings of the unit disk into sector domains. Finally, in Section \ref{sct7}, we  utilise the recent
computational methods from \cite{nrv1} to experimentally study the inequalities between the M\"obius and hyperbolic metric in polygon domains and formulate a few conjectures.
\section{Preliminaries}

First, define the following notations for the Euclidean metric. Let the distance from a point $x\in\R^n$ to a non-empty set $F\subset\R^n$ be $d(x,F)=\inf\{|x-z|\text{ }|\text{ }z\in F\}$. For a domain $G\subsetneq\R^n$, denote $d_G(x)=d(x,\partial G)$ for all $x\in G$. Let the Euclidean diameter of a non-empty set $F$ be $d(F)$ and the Euclidean distance between two non-empty separate sets $F_0,F_1$ be $d(F_0,F_1)$. Furthermore, denote the Euclidean open ball with a center $x\in\R^n$ and a radius $r>0$ by $B^n(x,r)$, the corresponding closed ball by $\overline{B}^n(x,r)$ and its boundary sphere by $S^{n-1}(x,r)$.

Let $\overline{\R}^n=\R^n\cup\{\infty\}$ be as in the introduction, and also denote $\overline{\C}^n=\C^n\cup\{\infty\}$. For all distinct points $x,y\in\overline{\R}^n$, define the \emph{spherical (chordal) metric} as in \cite[(3.6), p. 29]{hkvbook}:
\begin{align*}
q(x,y)&=\frac{|x-y|}{\sqrt{1+|x|^2}\sqrt{1+|y|^2}},\quad\text{if}\quad x\neq\infty\neq y;\quad
q(x,\infty)=\frac{1}{\sqrt{1+|x|^2}}.
\end{align*}
For any four distinct points $a,b,c,d\in\overline{\R}^n$, define the \emph{cross-ratio} as \cite[(3.10), p. 33]{hkvbook}
\begin{align}\label{def_crossratio}
|a,b,c,d|=\frac{q(a,c)q(b,d)}{q(a,b)q(c,d)},
\end{align}
and note that, if $\infty\notin\{a,b,c,d\}$, then this definition yields
\begin{align*}
|a,b,c,d|=\frac{|a-c||b-d|}{|a-b||c-d|}.
\end{align*}

Other than the M\"obius metric, we will be needing a few other hyperbolic type metrics. Define the upper half-space $\uhp^n=\{(x_1,...,x_n)\in\R^n\text{ }|\text{ }x_n>0\}$, the unit ball $\B^n=B^n(0,1)$ and the open sector $S_\theta=\{x\in\C\backslash\{0\}\text{ }|\text{ }0<\arg(x)<\theta\}$ with an angle $\theta\in(0,2\pi)$. Here, $\arg(x)\in[0,2\pi)$ denotes the principal branch of the argument of a complex number $x\in\C\backslash\{0\}$. Now, we can define the \emph{hyperbolic metric} in these three domains by using the following formulas, respectively \cite[(4.8), p. 52 \& (4.14), p. 55]{hkvbook}:
\begin{align*}
\text{ch}\rho_{\uhp^n}(x,y)&=1+\frac{|x-y|^2}{2d_{\uhp^n}(x)d_{\uhp^n}(y)},\quad x,y\in\uhp^n,\\
\text{sh}^2\frac{\rho_{\B^n}(x,y)}{2}&=\frac{|x-y|^2}{(1-|x|^2)(1-|y|^2)},\quad x,y\in\B^n,\\
\rho_{S_\theta}(x,y)&=\rho_{\uhp^2}(x^{\pi\slash\theta},y^{\pi\slash\theta}),\quad x,y\in S_\theta,
\end{align*}
From these formulas, it follows that:
\begin{align}
\text{th}\frac{\rho_{\uhp^2}(x,y)}{2}=\left|\frac{x-y}{x-\overline{y}}\right|,\quad
\text{th}\frac{\rho_{\B^2}(x,y)}{2}&=\left|\frac{x-y}{1-x\overline{y}}\right|,
\end{align}
where $\overline{y}$ is the complex conjugate of $y$.

For a domain $G\subsetneq\R^n$, the \emph{distance ratio metric} \cite[p. 25]{cgqm} introduced by Gehring and Osgood \cite{GO79} is the function $j_G:G\times G\to[0,\infty)$,
\begin{align*}
j_G(x,y)=\log(1+\frac{|x-y|}{\min\{d_G(x),d_G(y)\}}).   
\end{align*}
As noted in \cite[2.2, p. 1123 \& Lemma 2.1, p. 1124]{hvz}, this metric can be used to define another metric, the so called \emph{$j^*_G$-metric} $j^*_G:G\times G\to[0,1],$
\begin{align*}
j^*_G(x,y)={\rm th}\frac{j_G(x,y)}{2}=\frac{|x-y|}{|x-y|+2\min\{d_G(x),d_G(y)\}}.    
\end{align*}
Furthermore, the \emph{quasihyperbolic metric} introduced by Gehring and Palka in \cite{gp} is defined as the function $k_G:G\times G\to[0,\infty)$
\begin{align*}
k_G(x,y)=\inf_{\gamma\in\Gamma_{xy}}\int_\gamma\frac{|dx|}{d_G(x)},    
\end{align*}
where $\Gamma_{xy}$ consists of all the rectifiable curves in $G$ joining $x$ and $y$. Consider yet the \emph{triangular ratio metric} \cite[(1.1), p. 683]{chkv} $s_G:G\times G\to[0,1],$ 
\begin{align*}
s_G(x,y)=\frac{|x-y|}{\inf_{z\in\partial G}(|x-z|+|z-y|)}, 
\end{align*}
which was originally introduced by P. H\"ast\"o in 2002 \cite{h}.

The following result expresses the main property of the M\"obius metric:

\begin{theorem}\label{thm_seitminvariant} \cite{S99}, \cite[Thm 5.16, p. 75]{hkvbook}
The M\"obius metric $\delta_G$ is M\"obius invariant: If $G\subset\overline{\R}^n$ is a domain such that $\overline{\R}^n\backslash G$ contains at least two points and $h:\overline{\R}^n\to\overline{\R}^n$ is a M\"obius transformation, then for all $x,y\in G$,
\begin{align*}
\delta_{h(G)}(h(x),h(y))=\delta_G(x,y).    
\end{align*}
Furthermore, $\delta_G=\rho_G$ for $G\in\{\B^n,\uhp^n\}$.
\end{theorem}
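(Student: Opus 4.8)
The plan is to separate the two assertions and treat them independently. For the M\"obius invariance, the essential input is the classical invariance of the absolute cross-ratio: for any four distinct points of $\overline{\R}^n$ and any M\"obius transformation $h$ one has $|h(a),h(b),h(c),h(d)|=|a,b,c,d|$, an identity that holds uniformly for the chordal form in \eqref{def_crossratio} whether or not one of the points is $\infty$. Granting this, I would argue that $h$ is a homeomorphism of $\overline{\R}^n$ carrying $G$ onto $h(G)$, hence it maps $\partial G$ bijectively onto $\partial(h(G))$. Therefore, writing $a'=h(a)$, $b'=h(b)$, the pair $(a',b')$ ranges over $\partial(h(G))\times\partial(h(G))$ exactly as $(a,b)$ ranges over $\partial G\times\partial G$. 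Substituting into the definition \eqref{def_delta} and applying the cross-ratio invariance $|a',h(x),b',h(y)|=|a,x,b,y|$ termwise yields
\begin{align*}
\delta_{h(G)}(h(x),h(y))=\sup_{a,b\in\partial G}\log(1+|a,x,b,y|)=\delta_G(x,y),
\end{align*}
with the only subtlety being the bookkeeping of the value $\infty$, which the chordal expression handles automatically.

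For the identity $\delta_G=\rho_G$, I would first reduce the number of model domains. Since a Cayley-type M\"obius transformation maps $\uhp^n$ onto $\B^n$, and since both $\delta$ (by the first part) and $\rho$ are M\"obius invariant, it suffices to prove $\delta_{\uhp^n}=\rho_{\uhp^n}$; the ball case then transports across. Next, given $x,y\in\uhp^n$ I would use a M\"obius self-map of $\uhp^n$ to carry the hyperbolic geodesic through $x,y$ onto the $x_n$-axis, and then a dilation to arrange $x=e_n$, $y=\lambda e_n$ with $\lambda>1$, where $e_n=(0,\dots,0,1)$. A direct check with the half-space formula gives $\rho_{\uhp^n}(x,y)=\log\lambda$ in this position.

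It then remains to evaluate the supremum of the cross-ratio, the computational heart of the argument, and show it equals $\lambda-1=e^{\rho_{\uhp^n}(x,y)}-1$; this forces $\delta_{\uhp^n}(x,y)=\log(1+(\lambda-1))=\log\lambda=\rho_{\uhp^n}(x,y)$. The lower bound is immediate: choosing $a=0$ and $b=\infty$, the two endpoints of the geodesic, the cross-ratio $|0,x,\infty,y|$ collapses to $|x-y|/|a-x|=\lambda-1$. For the matching upper bound I would first reduce the optimisation to a $2$-plane: fixing a boundary point $a$ and writing $b\in\R^{n-1}$ in components along and perpendicular to the plane $P=\mathrm{span}(e_n,a)$, an elementary sign analysis of the resulting ratio shows that any maximising $b$ may be taken with vanishing perpendicular component, i.e.\ $b\in P$; the rotational symmetry about the axis then makes only $|a|$ relevant. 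This lands us in the planar case $x=i$, $y=i\lambda$, $a,b\in\R\cup\{\infty\}$, where maximising
\begin{align*}
|a,x,b,y|=\frac{|a-b|\,(\lambda-1)}{|a-i|\,|b-i\lambda|}
\end{align*}
over $b$ first gives $\max_b |a-b|/|b-i\lambda|=\sqrt{a^2+\lambda^2}/\lambda$, and optimising the remaining expression over $a$ produces its maximum at $a=0$ with value $\lambda-1$, pinning the supremum to the geodesic endpoints.

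The main obstacle is precisely this cross-ratio optimisation: one must verify that the supremum over all boundary pairs is genuinely attained at the endpoints of the geodesic through $x$ and $y$, which is a sharper statement than the one-sided comparisons of Section \ref{sct3}. The two technical points requiring care are the dimensional reduction by the rotational-and-reflection symmetry of the normalised configuration, and the consistent treatment of $\infty$ in the chordal cross-ratio; the residual one-variable maximisations are routine.
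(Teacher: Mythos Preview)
The paper does not prove Theorem~\ref{thm_seitminvariant}; it is stated with citations to \cite{S99} and \cite{hkvbook} and thereafter used as a black box (for instance in the proof of Proposition~\ref{prop_abinSH}). So there is no paper-side argument to compare against.

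Your proof is correct. The M\"obius-invariance part is routine, and your normalisation $x=e_n$, $y=\lambda e_n$ in $\uhp^n$ together with the explicit optimisation over boundary pairs correctly pins the supremum of the cross-ratio to $\lambda-1$, attained at the geodesic endpoints $a=0$, $b=\infty$. The one step that deserves a line more of care is the dimensional reduction: once $a$ is rotated into the plane $P$, writing $b=b_\parallel+b_\perp$ gives
\[
\frac{|a-b|^2}{|b-y|^2}=\frac{|a-b_\parallel|^2+|b_\perp|^2}{|b_\parallel-y|^2+|b_\perp|^2},
\]
which is \emph{increasing} in $|b_\perp|$ precisely when $|a-b_\parallel|<|b_\parallel-y|$, with limit $1$ as $|b_\perp|\to\infty$. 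Since the planar maximum $\sqrt{a^2+\lambda^2}/\lambda$ you compute is always $\ge 1$, the global supremum is nonetheless realised in $P$ (or at $b=\infty$), so the reduction survives; but the sign analysis is slightly subtler than your sketch indicates.

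As a side remark, the paper's Proposition~\ref{prop_abinSH} runs the half-plane logic in reverse: it \emph{assumes} $\delta_{\uhp^2}=\rho_{\uhp^2}$ from Theorem~\ref{thm_seitminvariant} and uses this to identify the extremal boundary pair for symmetric configurations. Your direct optimisation would yield that proposition (in dimension two) without invoking the cited theorem.
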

\section{General inequalities}\label{sct3}

In this section, we will briefly review a few already existing inequalities and show how they can be used to create bounds for the M\"obius metric. Note that the inequalities found here concern mostly the situation, in which the shape of the domain $G$ is not known. For instance, Corollary \ref{cor_deltauni} gives us an inequality for a simply connected uniform domain $G$, but its constants are probably not very sharp when compared to those that could be obtained when knowing the exact shape of the domain $G$.

\begin{theorem}\label{thm_deltaj}\cite[Thm 5.16, p. 75]{hkvbook}
For all points $x,y$ in a domain $G\subsetneq\R^n$,
\begin{align*}
j_G(x,y)\leq\delta_G(x,y)\leq 2j_G(x,y)    
\end{align*}
and, in the special case $G=\R^n\backslash\{0\}$, $\delta_G=j_G$. 
\end{theorem}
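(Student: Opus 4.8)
The plan is to work directly from the Euclidean form of the cross-ratio. Writing $|a,x,b,y|=\frac{|a-b|\,|x-y|}{|a-x|\,|b-y|}$ and abbreviating $r=|x-y|$ and $m=\min\{d_G(x),d_G(y)\}$, both inequalities become comparisons between $1+|a,x,b,y|$ and $1+r/m=e^{j_G(x,y)}$. Since both $\delta_G$ and $j_G$ are symmetric in $x$ and $y$ (for $\delta_G$ this follows by relabelling $a\leftrightarrow b$ in the supremum), I would assume without loss of generality that $x\neq y$ and $d_G(x)\le d_G(y)$, so that $m=d_G(x)$.

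For the lower bound $j_G\le\delta_G$ I would exhibit one admissible pair $(a,b)$ for which the cross-ratio already reaches $r/m$. Choose $a\in\partial G$ with $|a-x|=d_G(x)=m$ (a nearest boundary point, which exists as $\partial G$ is nonempty and closed). It then suffices to produce $b\in\partial G$ with $|a-b|\ge|b-y|$, because for such $b$ one has $|a,x,b,y|=\frac{r}{|a-x|}\cdot\frac{|a-b|}{|b-y|}\ge r/m$, and passing to the supremum in the definition of $\delta_G$ gives the claim. The existence of $b$ is the one step that needs care, and I would argue geometrically: every point of the ray $z(s)=y+s(y-a)$, $s\ge0$, satisfies $|z(s)-y|=s|y-a|\le(1+s)|y-a|=|z(s)-a|$, hence lies in the closed half-space $\{z:|z-y|\le|z-a|\}$. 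If this ray meets $\partial G$, take $b$ to be a crossing point; otherwise the ray lies in $G$, so $G$ is unbounded, $\infty\in\partial G$, and $b=\infty$ works since $|a,x,\infty,y|=r/|a-x|=r/m$.

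For the upper bound $\delta_G\le 2j_G$ I would use the triangle inequality $|a-b|\le|a-x|+|x-y|+|y-b|$ together with $|a-x|\ge d_G(x)\ge m$ and $|b-y|\ge d_G(y)\ge m$, which give
\[
|a,x,b,y|\le\frac{(|a-x|+r+|b-y|)\,r}{|a-x|\,|b-y|}=\frac{r}{|b-y|}+\frac{r^2}{|a-x|\,|b-y|}+\frac{r}{|a-x|}\le\frac{2r}{m}+\frac{r^2}{m^2}.
\]
Hence $1+|a,x,b,y|\le 1+\frac{2r}{m}+\frac{r^2}{m^2}=\bigl(1+\frac{r}{m}\bigr)^2$ uniformly in $a,b$, and taking logarithms and then the supremum yields $\delta_G(x,y)\le 2\log(1+r/m)=2j_G(x,y)$. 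The degenerate cases $a=\infty$ or $b=\infty$ are covered by the same estimate in the limit, where one of the three terms simply drops out.

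Finally, for the special domain $G=\R^n\setminus\{0\}$ the boundary is exactly $\{0,\infty\}$, so the only admissible pairs are $(0,\infty)$ and $(\infty,0)$; direct evaluation gives $|0,x,\infty,y|=|x-y|/|x|$ and $|\infty,x,0,y|=|x-y|/|y|$, whence $\delta_G(x,y)=\log\bigl(1+|x-y|/\min\{|x|,|y|\}\bigr)$. Since $d_G(x)=|x|$ here, this is precisely $j_G(x,y)$, establishing the equality. The only genuinely nontrivial point in the whole argument is the existence of the far-side boundary point $b$ in the lower bound, and I expect the half-space/ray argument above to settle it in all cases.
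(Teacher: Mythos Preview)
The paper does not actually prove this theorem; it is quoted as a known result from \cite[Thm~5.16, p.~75]{hkvbook}, so there is no in-paper proof to compare against. Your argument is essentially the standard one and is correct. A couple of very minor points: the existence of a nearest boundary point $a$ requires a word about compactness (intersect $\partial G$ with a closed ball around $x$), not just closedness; and in the ray argument you should note that the boundary in the definition of $\delta_G$ is taken in $\overline{\R}^n$, so that $\infty$ is indeed admissible when $G$ is unbounded. With those clarifications the lower bound, the upper bound via $1+|a,x,b,y|\le(1+r/m)^2$, and the punctured-space case all go through exactly as you describe.
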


\begin{theorem}\label{thm_krho}\cite[(3.2.1), p. 35]{gh}
For all points $x,y$ in a simply connected domain $G\subsetneq\R^2$,
\begin{align*}
\frac{1}{2}k_G(x,y)\leq\rho_G(x,y)\leq2k_G(x,y).    
\end{align*}
\end{theorem}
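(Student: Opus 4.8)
The plan is to derive both inequalities from a single pointwise comparison of the conformal densities generating the two metrics. Identifying $\R^2$ with $\C$, recall that $k_G$ is the length metric generated by the density $1\slash d_G(z)$, so that $k_G(x,y)=\inf_{\gamma\in\Gamma_{xy}}\int_\gamma|dz|\slash d_G(z)$. On a simply connected domain the hyperbolic metric is likewise a length metric, generated by some density $\lambda_G$; with the normalisation used in the excerpt the half-plane formula gives $\lambda_{\uhp^2}(z)=1\slash{\rm Im}\,z$ and the disk formula gives $\lambda_{\B^2}(z)=2\slash(1-|z|^2)$, so in particular $\lambda_{\B^2}(0)=2$ and $\rho_G(x,y)=\inf_{\gamma\in\Gamma_{xy}}\int_\gamma\lambda_G(z)\,|dz|$. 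Everything then reduces to comparing $\lambda_G$ with $1\slash d_G$.

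The key step is the pointwise estimate
\begin{align*}
\frac{1}{2\,d_G(z)}\leq\lambda_G(z)\leq\frac{2}{d_G(z)},\qquad z\in G.
\end{align*}
To prove it, fix $z\in G$ and choose a conformal map $f\colon\B^2\to G$ with $f(0)=z$; such a map exists by the Riemann mapping theorem, since $G$ is simply connected and $G\neq\C$. Conformal invariance of the hyperbolic metric together with $\lambda_{\B^2}(0)=2$ gives $\lambda_G(z)\,|f'(0)|=2$, hence $\lambda_G(z)=2\slash|f'(0)|$, and it suffices to show $\tfrac14|f'(0)|\leq d_G(z)\leq|f'(0)|$. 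The left inequality is the Koebe one-quarter theorem for the normalised univalent function $w\mapsto(f(w)-z)\slash f'(0)$, whose image contains $B^2(0,1\slash4)$, so that $G=f(\B^2)\supseteq B^2(z,|f'(0)|\slash4)$. The right inequality follows from the Schwarz lemma applied to the self-map $w\mapsto f^{-1}(z+d_G(z)\,w)$ of $\B^2$, which fixes $0$ and has derivative of modulus $d_G(z)\slash|f'(0)|\leq1$ there. Substituting these two bounds into $\lambda_G(z)=2\slash|f'(0)|$ yields the density estimate.

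Finally I would integrate and pass to the infimum. For any $\gamma\in\Gamma_{xy}$ the density estimate gives
\begin{align*}
\frac{1}{2}\int_\gamma\frac{|dz|}{d_G(z)}\leq\int_\gamma\lambda_G(z)\,|dz|\leq2\int_\gamma\frac{|dz|}{d_G(z)},
\end{align*}
that is, the $\rho$-length of $\gamma$ is squeezed between $\tfrac12$ and $2$ times its $k$-length. Taking the infimum over $\gamma\in\Gamma_{xy}$, the right inequality yields $\rho_G(x,y)\leq2k_G(x,y)$, while the left inequality gives, for every $\gamma$, that its $\rho$-length is at least $\tfrac12k_G(x,y)$ and hence $\tfrac12k_G(x,y)\leq\rho_G(x,y)$. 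Together these are the assertion.

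The only substantive ingredient is the density comparison, and inside it the one-quarter theorem; this is precisely the point at which simple connectivity and planarity are used. I expect this step to be the main obstacle to a self-contained argument, since the remaining passage from the pointwise density bound to the length-metric bound is formal once the two densities are identified.
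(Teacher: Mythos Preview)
Your argument is correct: the pointwise density estimate $\tfrac{1}{2d_G(z)}\leq\lambda_G(z)\leq\tfrac{2}{d_G(z)}$ via the Riemann map, Koebe's one-quarter theorem, and the Schwarz lemma is the standard route, and the passage to the length metrics by integrating and taking infima is carried out cleanly. Note, however, that the paper does not supply its own proof of this statement at all; it is quoted as a known result from Gehring and Hag's monograph, so there is no in-paper argument to compare against. What you have written is essentially the proof one finds in that reference.
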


\begin{theorem}\label{thm_jk}\cite[Cor. 5.6, p. 69]{hkvbook}
For all points $x,y$ in a domain $G\subsetneq\R^n$, $j_G(x,y)\leq k_G(x,y)$.
\end{theorem}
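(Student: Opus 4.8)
The plan is to bound the quasihyperbolic line integral from below by the distance-ratio quantity along \emph{every} admissible curve, and then pass to the infimum. Since both $j_G$ and $k_G$ are symmetric in their two arguments, I may assume without loss of generality that $d_G(x)\leq d_G(y)$, so that $\min\{d_G(x),d_G(y)\}=d_G(x)$. Note also that $d_G(x)$ is finite and positive: positive because $x$ lies in the open set $G$, and finite because $G\subsetneq\R^n$ forces $\partial G\neq\emptyset$.

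The key elementary fact I would establish first is that the boundary-distance function $d_G$ is $1$-Lipschitz, i.e.\ $|d_G(z)-d_G(w)|\leq|z-w|$ for all $z,w\in G$. This follows from the triangle inequality: for any $\zeta\in\partial G$ one has $d_G(z)\leq|z-\zeta|\leq|z-w|+|w-\zeta|$, and taking the infimum over $\zeta\in\partial G$ yields $d_G(z)\leq|z-w|+d_G(w)$; exchanging the roles of $z$ and $w$ gives the reverse inequality.

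Next I would fix an arbitrary rectifiable curve $\gamma\in\Gamma_{xy}$ and parametrise it by arc length, $\gamma:[0,\ell]\to G$ with $\gamma(0)=x$ and $\gamma(\ell)=y$, where $\ell$ denotes the Euclidean length of $\gamma$. Applying the Lipschitz bound with $w=x$ gives $d_G(\gamma(t))\leq d_G(x)+|\gamma(t)-x|\leq d_G(x)+t$, since the arc-length parametrisation satisfies $|\gamma(t)-x|\leq t$. Consequently the integrand obeys the pointwise lower bound $1/d_G(\gamma(t))\geq 1/(d_G(x)+t)$, and integrating yields
\begin{align*}
\int_\gamma\frac{|dz|}{d_G(z)}=\int_0^\ell\frac{dt}{d_G(\gamma(t))}\geq\int_0^\ell\frac{dt}{d_G(x)+t}=\log\left(1+\frac{\ell}{d_G(x)}\right).
\end{align*}

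Finally, because any curve joining $x$ and $y$ has length at least the Euclidean distance $|x-y|$, so that $\ell\geq|x-y|$, and the map $t\mapsto\log(1+t/d_G(x))$ is increasing, the right-hand side is bounded below by $\log(1+|x-y|/d_G(x))=j_G(x,y)$. Taking the infimum over all $\gamma\in\Gamma_{xy}$ then gives $k_G(x,y)\geq j_G(x,y)$, as claimed. The only point demanding genuine care is the Lipschitz estimate that feeds the integral comparison; once the monotone envelope $d_G(\gamma(t))\leq d_G(x)+t$ is in hand, the rest reduces to a routine one-dimensional integral together with the length inequality $\ell\geq|x-y|$.
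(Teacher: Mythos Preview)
Your proof is correct and is precisely the standard argument for this classical inequality. Note, however, that the paper does not actually supply its own proof of this statement: Theorem~\ref{thm_jk} is quoted from the literature with a citation to \cite[Cor.~5.6, p.~69]{hkvbook}, so there is no in-paper argument to compare against. The proof you have written---bounding $d_G(\gamma(t))\leq d_G(x)+t$ via the $1$-Lipschitz property of $d_G$, integrating the resulting envelope, and using $\ell\geq|x-y|$---is exactly the classical proof going back to Gehring--Palka~\cite{gp} and is what one finds in the cited reference.
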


\begin{definition}\label{def_uni}\cite[Def. 6.1, p. 84]{hkvbook},\cite[Def. 2.4, p. 8]{linden}
A domain $G\subset\R^n$ is \emph{uniform} if there exists a number $A\geq1$ such that the inequality $k_G(x,y)\leq Aj_G(x,y)$ holds for all $x,y\in G$ and the smallest such number $A$ fulfilling this condition is called the \emph{uniformity constant} of $G$.
\end{definition}

\begin{corollary}\label{cor_deltauni}
If a domain $G\subsetneq\R^2$ is simply connected and uniform with the uniformity constant $A_G$, then
\begin{align*}
\rho_G(x,y)\slash(2A_G)\leq\delta_G(x,y)\leq4\rho_G(x,y)    
\end{align*}
for all $x,y\in G$.
\end{corollary}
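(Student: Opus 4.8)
The plan is to prove Corollary \ref{cor_deltauni} by chaining together the four preceding results in the excerpt, so the argument should be essentially a matter of bookkeeping. First I would fix a simply connected uniform domain $G\subsetneq\R^2$ with uniformity constant $A_G$ and two points $x,y\in G$. The lower bound is the shorter half: by Theorem \ref{thm_krho} we have $\rho_G(x,y)\leq 2k_G(x,y)$, and by Definition \ref{def_uni} uniformity gives $k_G(x,y)\leq A_G j_G(x,y)$; combining these yields $j_G(x,y)\geq \frac{1}{2A_G}\rho_G(x,y)$. Since Theorem \ref{thm_deltaj} gives $\delta_G(x,y)\geq j_G(x,y)$, we conclude $\delta_G(x,y)\geq \frac{1}{2A_G}\rho_G(x,y)$.

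For the upper bound I would start from the other inequality in Theorem \ref{thm_deltaj}, namely $\delta_G(x,y)\leq 2 j_G(x,y)$, then use Theorem \ref{thm_jk}, $j_G(x,y)\leq k_G(x,y)$, to get $\delta_G(x,y)\leq 2k_G(x,y)$, and finally apply the lower bound in Theorem \ref{thm_krho}, $\tfrac12 k_G(x,y)\leq\rho_G(x,y)$, i.e.\ $k_G(x,y)\leq 2\rho_G(x,y)$. Stringing these together gives $\delta_G(x,y)\leq 4\rho_G(x,y)$, as claimed. Putting the two bounds side by side establishes
\[
\frac{1}{2A_G}\rho_G(x,y)\leq\delta_G(x,y)\leq4\rho_G(x,y).
\]

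There is no real obstacle here: every inequality needed is quoted verbatim in the excerpt (Theorems \ref{thm_deltaj}, \ref{thm_krho}, \ref{thm_jk} and Definition \ref{def_uni}), and the hypotheses—simple connectivity for Theorem \ref{thm_krho}, planarity for all of them, and uniformity for the use of $A_G$—are exactly the standing assumptions of the corollary. The only thing to be slightly careful about is not double-using Theorem \ref{thm_krho} in the "wrong direction": the lower bound for $\delta_G$ uses $\rho_G\leq 2k_G$ while the upper bound uses $k_G\leq 2\rho_G$, and both are contained in the single two-sided statement of that theorem. One could also remark that the upper constant $4$ is independent of $A_G$, which is worth a sentence since it may look surprising, but it is simply a consequence of the fact that the upper estimate never invokes uniformity.
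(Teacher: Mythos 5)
Your proposal is correct and is exactly the paper's argument: the authors prove the corollary by the same chain $\tfrac{1}{2A_G}\rho_G\leq\tfrac{1}{A_G}k_G\leq j_G\leq\delta_G\leq 2j_G\leq 2k_G\leq 4\rho_G$, using Theorem \ref{thm_krho}, Definition \ref{def_uni}, Theorem \ref{thm_deltaj} and Theorem \ref{thm_jk} just as you do. No differences worth noting.
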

\begin{proof}
Follows from Theorems \ref{thm_jk} and \ref{thm_krho}, and Definition \ref{def_uni}.
\end{proof}

Now, let us find some bounds for the M\"obius metric with the triangular ratio metric and the $j^*$-metric in the cases of both a convex domain $G$ and a non-convex one.

\begin{lemma}\label{lem_sj}\cite[Lemma 2.1, p. 1124; Lemma 2.2, p. 1125 \& Thm 2.9(i), p. 1129]{hvz}
For all points $x,y$ in a domain $G\subsetneq\R^n$,
\begin{align*}
j^*_G(x,y)\leq s_G(x,y)\leq 2j^*_G(x,y)    
\end{align*}
and, if $G$ is convex, the constant 2 above can be replaced by $\sqrt{2}$.
\end{lemma}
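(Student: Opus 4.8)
The statement is Lemma \ref{lem_sj}, comparing the triangular ratio metric $s_G$ with the $j^*_G$-metric on an arbitrary domain $G\subsetneq\R^n$. Since both metrics are expressed through explicit Euclidean quantities, the plan is to work directly with the definitions and reduce everything to elementary distance estimates. Throughout, fix $x,y\in G$ and, without loss of generality, assume $d_G(x)=\min\{d_G(x),d_G(y)\}$ so that $j^*_G(x,y)=|x-y|\slash(|x-y|+2d_G(x))$. The key observation is that for any boundary point $z\in\partial G$ one has $|x-z|\geq d_G(x)$ and $|y-z|\geq d_G(y)\geq d_G(x)$, while by the triangle inequality $|x-z|+|z-y|\geq|x-y|$.

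First I would prove the lower bound $j^*_G(x,y)\leq s_G(x,y)$. Writing $L=\inf_{z\in\partial G}(|x-z|+|z-y|)$, it suffices to show $L\leq|x-y|+2d_G(x)$, since $s_G(x,y)=|x-y|\slash L$ and the map $t\mapsto |x-y|\slash t$ is decreasing. To bound $L$ from above, I would choose a near-optimal competitor $z_0\in\partial G$ realizing $d_G(x)$ (i.e.\ $|x-z_0|$ arbitrarily close to $d_G(x)$); then $|z_0-y|\leq|z_0-x|+|x-y|$ gives $|x-z_0|+|z_0-y|\leq 2|x-z_0|+|x-y|\to 2d_G(x)+|x-y|$, and taking the infimum yields $L\leq|x-y|+2d_G(x)$, which is exactly the desired inequality.

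Next I would prove the upper bound $s_G(x,y)\leq 2j^*_G(x,y)$, equivalently $L\geq\tfrac12(|x-y|+2d_G(x))=\tfrac12|x-y|+d_G(x)$. For an arbitrary $z\in\partial G$, I would combine the two lower estimates $|x-z|+|z-y|\geq|x-y|$ and $|x-z|+|z-y|\geq d_G(x)+d_G(y)\geq 2d_G(x)$; averaging these two inequalities (taking half of each and adding) gives $|x-z|+|z-y|\geq\tfrac12|x-y|+d_G(x)$. Since this holds for every $z\in\partial G$, it passes to the infimum $L$, giving the required bound and hence $s_G\leq 2j^*_G$.

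For the convex refinement, the hypothesis $G$ convex lets me pick a more efficient competitor than the pair of crude estimates above. The plan is to take $z_0$ to be the point where the segment $[x,y]$, extended past $x$, meets $\partial G$, or more robustly the foot of the perpendicular from $x$ to a supporting hyperplane of $G$ at a nearest boundary point; convexity guarantees that $y$ and $x$ lie on the same side, so the angle at $z_0$ in the triangle $x z_0 y$ is obtuse. The main obstacle, and the only genuinely nontrivial step, is to extract the constant $\sqrt2$ rather than $2$ from this geometry: I would place $x$ at distance $d_G(x)$ from a supporting line, write $|x-z|+|z-y|$ for $z$ on that line, minimize the sum, and show the minimizing configuration forces $L\geq|x-y|\slash(\sqrt2\, j^*_G(x,y))$ after substitution; concretely this reduces to verifying that for the worst-case position of $y$ relative to the supporting line the ratio $s_G\slash j^*_G$ never exceeds $\sqrt2$, an elementary but careful two-variable optimization. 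Since these estimates are already established in the cited reference \cite{hvz}, I would present the argument in the streamlined form above and refer to \cite{hvz} for the sharpness of the constants.
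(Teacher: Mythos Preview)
The paper does not prove this lemma at all: it is stated with a citation to \cite{hvz} and used as a black box, so there is no ``paper's own proof'' to compare against. Your argument for the two general inequalities is correct and self-contained; the choice of a near-realizing boundary point for $d_G(x)$ gives $L\le |x-y|+2d_G(x)$, and the averaging of the two trivial lower bounds $L\ge|x-y|$ and $L\ge 2d_G(x)$ gives $L\ge\tfrac12|x-y|+d_G(x)$, which is exactly what is needed.

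The convex refinement, however, is not actually proved in your proposal. You describe a plausible geometric setup (supporting hyperplane at a nearest boundary point, obtuse angle at $z_0$), but the inequality you need is $L\ge(|x-y|+2d_G(x))/\sqrt{2}$, and nothing in your sketch delivers that constant; the obtuse-angle observation at a \emph{single} competitor $z_0$ controls $|x-z_0|+|z_0-y|$ from above via $|x-y|$, not from below, so it points in the wrong direction for bounding $L$. The phrase ``an elementary but careful two-variable optimization'' followed by a deferral to \cite{hvz} is where the real work would have to happen, and you have not done it. If you want a complete proof rather than a citation, you need to argue at the \emph{infimizing} boundary point (or a supporting half-space there) and exploit convexity to compare $|x-z|+|z-y|$ with both $|x-y|$ and $d_G(x)+d_G(y)$ in a way that yields the $\sqrt{2}$ factor; as written, that step is missing.
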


\begin{lemma}\label{lem_sthj}\cite[Lemma 2.7(ii), p. 1128]{hvz}
For all points $x,y$ in a convex domain $G\subsetneq\R^n$,
\begin{align*}
{\rm th}\frac{j_G(x,y)}{2}\leq s_G(x,y)\leq{\rm th}j_G(x,y).
\end{align*}
\end{lemma}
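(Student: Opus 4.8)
The plan is to treat the two inequalities separately, since only the right-hand one uses convexity. Write $a=|x-y|$ and $m=\min\{d_G(x),d_G(y)\}$, and recall that $s_G(x,y)=a/L$ with $L=\inf_{z\in\partial G}(|x-z|+|z-y|)$, while ${\rm th}(j_G(x,y)/2)=j^*_G(x,y)=a/(a+2m)$. For the left inequality ${\rm th}(j_G/2)\le s_G$ I would simply invoke Lemma \ref{lem_sj}, which already gives $j^*_G\le s_G$ in every domain; equivalently, a one-line argument suffices. Taking $d_G(x)=m$ without loss of generality and letting $z_0\in\partial G$ realize $|x-z_0|=d_G(x)=m$, the triangle inequality gives $|x-z_0|+|z_0-y|\le m+(|z_0-x|+|x-y|)=a+2m$, so $L\le a+2m$ and hence $a/(a+2m)\le a/L=s_G$. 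No convexity is needed here.

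The substance is the right inequality $s_G\le{\rm th}(j_G)$, which amounts to a lower bound for $L$. The key idea is a reflection trick that exploits convexity through supporting hyperplanes. Fix any $z\in\partial G$ and choose a supporting hyperplane $H_z$ of $G$ at $z$, which exists because $G$ is convex. Let $x^*_z$ be the reflection of $x$ across $H_z$. Since $z\in H_z$, reflection preserves the distance to $z$, so $|x-z|=|x^*_z-z|$, and the triangle inequality gives
\[
|x-z|+|z-y|=|x^*_z-z|+|z-y|\ge|x^*_z-y|.
\]
Thus it remains to bound $|x^*_z-y|$ below uniformly in $z$.

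For this I would use that $G$ lies in one closed half-space bounded by $H_z$, so the ball $B^n(x,d_G(x))\subseteq G$ cannot cross $H_z$; hence $d(x,H_z)\ge d_G(x)\ge m$, and likewise $d(y,H_z)\ge d_G(y)\ge m$. Since $x$ and $y$ lie on the same side of $H_z$, placing $H_z$ as a coordinate hyperplane shows $|x^*_z-y|^2=a^2+4\,d(x,H_z)\,d(y,H_z)\ge a^2+4m^2$. Combining with the previous display and taking the infimum over $z\in\partial G$ yields $L\ge\sqrt{a^2+4m^2}$, so that $s_G=a/L\le a/\sqrt{a^2+4m^2}$. I expect this geometric lower bound to be the main obstacle; once it is in place the rest is routine.

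Finally I would close the argument by an elementary computation: from $j_G=\log(1+a/m)$ one gets ${\rm th}(j_G)=\tfrac{a(a+2m)}{a^2+2am+2m^2}$, and the desired estimate $a/\sqrt{a^2+4m^2}\le{\rm th}(j_G)$ reduces, after clearing denominators and squaring, to the polynomial inequality $(a^2+2am+2m^2)^2\le(a+2m)^2(a^2+4m^2)$, whose difference equals $8am^3+12m^4\ge0$. This chains to $s_G\le a/\sqrt{a^2+4m^2}\le{\rm th}(j_G)$, completing the convex case, while the degenerate case $x=y$ (where all three quantities vanish) is immediate.
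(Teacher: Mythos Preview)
Your proof is correct. The left inequality is handled exactly as in Lemma~\ref{lem_sj}, and the right inequality is established cleanly: the supporting-hyperplane reflection yields $|x-z|+|z-y|\ge|x^*_z-y|$, the identity $|x^*_z-y|^2=a^2+4\,d(x,H_z)\,d(y,H_z)$ together with $d(x,H_z)\ge m$ and $d(y,H_z)\ge m$ gives $L\ge\sqrt{a^2+4m^2}$, and the final polynomial comparison $(a^2+2am+2m^2)^2\le(a+2m)^2(a^2+4m^2)$ with difference $8am^3+12m^4\ge0$ checks out. Note, however, that there is nothing to compare with in the paper itself: Lemma~\ref{lem_sthj} is quoted verbatim from \cite[Lemma~2.7(ii)]{hvz} and used without proof. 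Your argument is thus a self-contained replacement for that external citation, and in fact recovers the sharp intermediate bound $s_G(x,y)\le|x-y|/\sqrt{|x-y|^2+4m^2}$ valid for convex $G$, which is slightly more informative than the stated conclusion.
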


\begin{corollary}\label{cor_deltasj}
For a domain $G\subsetneq\R^n$ such that $\overline{\R}^n\backslash G$ contains at least two points and for all $x,y\in G$, the following inequalities hold:\newline
$(1)$ $j^*_G(x,y)\leq{\rm th}(\delta_G(x,y)\slash2)\leq{\rm th}(2{\rm arth}(j^*_G(x,y)))\leq2j^*_G(x,y)$,\newline
$(2)$ $s_G(x,y)\slash2\leq{\rm th}(\delta_G(x,y)\slash2)\leq{\rm th}(2{\rm arth}(s_G(x,y)))\leq2s_G(x,y)$.\newline
Furthermore, if $G$ is convex, then for all $x,y\in G$\newline 
$(3)$ $s_G(x,y)\slash\sqrt{2}\leq{\rm th}(\delta_G(x,y)\slash2)$,\newline
$(4)$ $s_G(x,y)\leq{\rm th}(\delta_G(x,y))$.
\end{corollary}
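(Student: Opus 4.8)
The goal is to prove Corollary~\ref{cor_deltasj}, which relates $\delta_G$ to the two metrics $j^*_G$ and $s_G$. The plan is to convert every bound into a statement about $\operatorname{th}(\delta_G/2)$ or $\operatorname{th}(\delta_G)$, and then feed in the already-established comparisons from Theorem~\ref{thm_deltaj} and Lemmas~\ref{lem_sj} and \ref{lem_sthj}. The key elementary observation is that $\operatorname{th}$ is a strictly increasing bijection from $[0,\infty)$ onto $[0,1)$, so an inequality $A\le B\le C$ between nonnegative quantities is equivalent to $\operatorname{th}(A)\le\operatorname{th}(B)\le\operatorname{th}(C)$; combined with the definition $j^*_G=\operatorname{th}(j_G/2)$ this is what makes the half-angle formulation natural.

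For part~(1): Theorem~\ref{thm_deltaj} gives $j_G(x,y)\le\delta_G(x,y)\le 2j_G(x,y)$. Applying the increasing map $t\mapsto\operatorname{th}(t/2)$ to all three terms yields
\begin{align*}
\operatorname{th}\frac{j_G(x,y)}{2}\le\operatorname{th}\frac{\delta_G(x,y)}{2}\le\operatorname{th}\frac{2j_G(x,y)}{2}=\operatorname{th}(j_G(x,y)).
\end{align*}
The left side is exactly $j^*_G(x,y)$ by definition, and writing $j_G(x,y)=2\operatorname{arth}(j^*_G(x,y))$ turns $\operatorname{th}(j_G(x,y))$ into $\operatorname{th}(2\operatorname{arth}(j^*_G(x,y)))$; this gives the first three quantities in~(1). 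The final inequality $\operatorname{th}(2\operatorname{arth}(u))\le 2u$ for $u\in[0,1)$ is the one genuinely analytic fact needed: from the double-angle formula $\operatorname{th}(2t)=\frac{2\operatorname{th}t}{1+\operatorname{th}^2 t}$ one gets $\operatorname{th}(2\operatorname{arth}(u))=\frac{2u}{1+u^2}\le 2u$, which is immediate. I would state this small inequality once and reuse it.

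For part~(2): Lemma~\ref{lem_sj} gives $j^*_G(x,y)\le s_G(x,y)\le 2j^*_G(x,y)$, equivalently $s_G(x,y)/2\le j^*_G(x,y)\le s_G(x,y)$. Substituting the outer bounds into part~(1) — using $j^*_G\ge s_G/2$ on the left of (1) and $j^*_G\le s_G$ on the right, together with monotonicity of $\operatorname{th}(2\operatorname{arth}(\cdot))$ and of $2(\cdot)$ — produces
\begin{align*}
\frac{s_G(x,y)}{2}\le\operatorname{th}\frac{\delta_G(x,y)}{2}\le\operatorname{th}(2\operatorname{arth}(s_G(x,y)))\le 2s_G(x,y),
\end{align*}
which is~(2). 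For~(3) and~(4), when $G$ is convex Lemma~\ref{lem_sj} improves the upper constant to $\sqrt2$, so $s_G/\sqrt2\le j^*_G$, and substituting into the left inequality of~(1) gives~(3). Finally~(4): by Lemma~\ref{lem_sthj}, $s_G(x,y)\le\operatorname{th} j_G(x,y)$ on a convex domain, hence $j_G(x,y)\ge\operatorname{arth}(s_G(x,y))$, and since $\delta_G\ge j_G$ and $\operatorname{th}$ is increasing we get $\operatorname{th}(\delta_G(x,y))\ge\operatorname{th}(j_G(x,y))\ge s_G(x,y)$, which is~(4).

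I do not anticipate a serious obstacle here: the whole argument is a bookkeeping exercise in applying monotone functions to chains of known inequalities, and the only non-formal ingredient is the trivial bound $\operatorname{th}(2\operatorname{arth}(u))=2u/(1+u^2)\le 2u$. The one place to be careful is making sure each substitution respects monotonicity — in particular that $u\mapsto\operatorname{th}(2\operatorname{arth}(u))$ is increasing on $[0,1)$ (clear from its closed form $2u/(1+u^2)$, whose derivative $2(1-u^2)/(1+u^2)^2$ is positive there) — so that replacing $j^*_G$ by the larger quantity $s_G$ in~(1) is legitimate when deriving~(2).
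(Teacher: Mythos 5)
Your proposal is correct and follows essentially the same route as the paper: part (1) from Theorem \ref{thm_deltaj} plus the identity $j^*_G=\operatorname{th}(j_G/2)$ and the elementary bound $\operatorname{th}(2\operatorname{arth}u)=2u/(1+u^2)\le 2u$, parts (2)--(3) by substituting Lemma \ref{lem_sj} into (1), and part (4) from Theorem \ref{thm_deltaj} together with Lemma \ref{lem_sthj}. You simply spell out the monotonicity details that the paper's one-line proof leaves implicit.
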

\begin{proof}
(1) Follows from Theorem \ref{thm_deltaj} and the definition of $j^*$-metric.\newline
(2) Follows from the first inequality and Lemma \ref{lem_sj}.\newline
(3) Follows from the first inequality and Lemma \ref{lem_sj}.\newline
(4) Follows from Theorem \ref{thm_deltaj} and Lemma \ref{lem_sthj}.
\end{proof}

Finally, let us consider the case where the domain $G$ is an open sector. Note that neither the inequalities of Corollary \ref{cor_rhodeltaWithS} nor Corollary \ref{cor_rhodeltaSuni} have the best possible constants. In fact, they are only used to prove our main result, Theorem \ref{thm_finalfordeltaS}, in Section \ref{sct5}.

\begin{theorem}\label{thm_rhos}\cite[Cor. 4.9, p. 9]{sqm}
For a fixed angle $\theta\in(0,2\pi)$ and for all $x,y\in S_\theta$, the following results hold:\newline 
(1) $s_{S_\theta}(x,y)\leq
{\rm th}(\rho_{S_\theta}(x,y)\slash2)\leq
(\pi\slash\theta)\sin(\theta\slash2) s_{S_\theta}(x,y)$ if $\theta\in(0,\pi)$,\newline
(2) $s_{S_\theta}(x,y)={\rm th}(\rho_{S_\theta}(x,y)\slash2)$ if $\theta=\pi$,\newline
(3) $(\pi\slash\theta)s_{S_\theta}(x,y)\leq
{\rm th}(\rho_{S_\theta}(x,y)\slash2)\leq
 s_{S_\theta}(x,y)$ if $\theta\in(\pi,2\pi)$.\newline
 Furthermore, these bounds are also sharp.
\end{theorem}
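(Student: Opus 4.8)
The plan is to transfer everything to the upper half-plane through the conformal power map and to use the one clean fact that in $\uhp^2$ the triangular ratio metric equals ${\rm th}(\rho/2)$ exactly. I would settle the anchor case $\theta=\pi$ first, where $S_\pi=\uhp^2$. Since $\partial\uhp^2=\R$, reflecting $y$ to $\overline y$ shows that $\inf_{z\in\R}(|x-z|+|z-y|)=|x-\overline y|$, the value attained where the segment from $x$ to $\overline y$ meets the real axis. Hence $s_{\uhp^2}(x,y)=|x-y|/|x-\overline y|={\rm th}(\rho_{\uhp^2}(x,y)/2)$, which is precisely statement (2) and the identity on which the other two cases rest.

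For general $\theta$ write $\alpha=\pi/\theta$. The map $f(w)=w^{\alpha}$ sends $S_\theta$ conformally onto $\uhp^2$, so by the definition of $\rho_{S_\theta}$ recorded in the Preliminaries together with the $\theta=\pi$ computation,
\[
{\rm th}\frac{\rho_{S_\theta}(x,y)}{2}={\rm th}\frac{\rho_{\uhp^2}(x^{\alpha},y^{\alpha})}{2}=s_{\uhp^2}(x^{\alpha},y^{\alpha})=\frac{|x^{\alpha}-y^{\alpha}|}{|x^{\alpha}-\overline{y^{\alpha}}|}.
\]
On the other hand $s_{S_\theta}(x,y)=|x-y|/m(x,y)$, where $m(x,y)=\inf_{z\in\partial S_\theta}(|x-z|+|z-y|)$ is obtained by reflecting $y$ across each of the two bounding rays of $S_\theta$ (taking care of the regimes in which a reflected segment misses a ray, so that the infimum is realised at the vertex). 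Thus both (1) and (3) reduce to two-sided bounds on the quotient ${\rm th}(\rho_{S_\theta}(x,y)/2)/s_{S_\theta}(x,y)$ of these two explicit quantities, by the constants claimed.

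To identify the sharp constants I would first run the infinitesimal analysis. As $y\to x$ one has $m(x,y)\to2\,d_{S_\theta}(x)$ and ${\rm th}(\rho_{S_\theta}/2)\approx\rho_{S_\theta}/2$, so the quotient tends to the product of the hyperbolic density and $d_{S_\theta}(x)$. Evaluating on the bisector $w=re^{i\theta/2}$, where $f(w)=i\,r^{\alpha}$ gives density $\alpha/r=(\pi/\theta)/r$, yields the limit $(\pi/\theta)\sin(\theta/2)$ when $\theta<\pi$ (then $d_{S_\theta}=r\sin(\theta/2)$) and $\pi/\theta$ when $\theta>\pi$ (then the nearest boundary point is the vertex and $d_{S_\theta}=r$); letting $x,y$ approach a boundary ray far from the vertex, where the domain looks locally like a half-plane, gives the limit $1$. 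This pins down exactly the four constants in (1) and (3) and, by exhibiting these extremal sequences, simultaneously establishes their sharpness.

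The decisive and hardest step is to promote these diagonal extrema to global bounds valid for all $x,y$. Here I would use the scaling and bisector-reflection symmetries of $S_\theta$ to reduce the number of free parameters, normalise (say $|x|=1$), and then estimate the explicit quotient directly. The main obstacle is precisely this global optimisation: the denominator $m(x,y)$ is piecewise-defined, since in different regions of configuration space the infimum is realised on different bounding rays or at the vertex, and one must verify in each regime that the quotient never leaves the interval between $1$ and the bisector value. I expect the two ``constant $1$'' bounds (the lower bound for the convex case $\theta<\pi$ and the upper bound for the non-convex case $\theta>\pi$) to follow from the convexity estimate of Lemma \ref{lem_sthj} together with domain-comparison against the enclosing half-plane, while the constants $(\pi/\theta)\sin(\theta/2)$ and $\pi/\theta$ should require the genuine distortion estimate for the power map $w\mapsto w^{\alpha}$; the two cases are dual to each other, with the roles of the constants interchanged, which matches the sign flip of the inequalities at $\theta=\pi$.
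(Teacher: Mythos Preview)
The paper does not prove this statement: Theorem~\ref{thm_rhos} is quoted from \cite[Cor.~4.9]{sqm} without any argument reproduced here, so there is no ``paper's own proof'' to compare against. What one can infer is that the machinery of \cite{sqm} is the same as that deployed in Section~\ref{sct5} of the present paper for the parallel quotient $\delta_{S_\theta}/\rho_{S_\theta}$: Lemmas~\ref{lem_g} and~\ref{lem_confmappingtoH} normalise an arbitrary pair $(x,y)$ to the symmetric form $e^{(1\pm k)\theta i/2}$, both metrics are then computed explicitly as elementary trigonometric functions of $(k,\theta)$, and the resulting one-variable quotient is bounded over $k\in(0,1)$ by monotonicity lemmas of the type collected in Proposition~\ref{prop_triginc}.

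Your anchor case $\theta=\pi$, the transfer via $w\mapsto w^{\pi/\theta}$, and the infinitesimal sharpness analysis on the bisector and near the boundary rays are all correct and match that strategy. The genuine gap is exactly the one you flag, and your proposed shortcut for the two ``constant~$1$'' bounds does not work. Domain comparison against an enclosing or enclosed half-plane $H$ moves $s_G$ and ${\rm th}(\rho_G/2)$ in the \emph{same} direction (both increase as the domain shrinks), so from $S_\theta\subset H$ you obtain $s_{S_\theta}\geq s_H={\rm th}(\rho_H/2)\leq{\rm th}(\rho_{S_\theta}/2)$, which yields nothing; and Lemma~\ref{lem_sthj} compares $s_G$ only with ${\rm th}(j_G)$ and ${\rm th}(j_G/2)$, not with ${\rm th}(\rho_G/2)$. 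The constant-$1$ inequalities are therefore not ``soft'' consequences of convexity or inclusion; in \cite{sqm} they come out of the same explicit optimisation as the other two constants, once the symmetric normalisation has been made. Your final remark that everything hinges on a distortion estimate for the power map is accurate, but that estimate is precisely what the explicit $(k,\theta)$ computation supplies, and you have not carried it out.
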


\begin{corollary}\label{cor_rhodeltaWithS}
For all points $x,y\in S_\theta$, the following inequalities hold:
\begin{align*}
(1)\quad&\max\left\{2{\rm arth}\left(\frac{\theta}{\sqrt{2}\pi\sin(\theta\slash2)}{\rm th}\frac{\rho_{S_\theta}(x,y)}{2}\right),{\rm arth}\left(\frac{\theta}{\pi\sin(\theta\slash2)}{\rm th}\frac{\rho_{S_\theta}(x,y)}{2}\right)\right\}\\
&\leq\delta_{S_\theta}(x,y)
\leq2\rho_{S_\theta}(x,y),
\quad\text{if}\quad0<\theta<\pi,
\\
(2)\quad&2{\rm arth}\frac{{\rm th}(\rho_{S_\theta}(x,y)\slash2)}{2}
\leq\delta_{S_\theta}(x,y),
\quad\text{if}\quad\pi<\theta<2\pi,\\
(3)\quad&\delta_{S_\theta}(x,y)
\leq4{\rm arth}\left(\frac{\theta}{\pi}{\rm th}\frac{\rho_{S_\theta}(x,y)}{2}\right),
\quad\text{if}\quad\pi<\theta<2\pi\quad\text{and}\quad\frac{\theta}{\pi}{\rm th}\frac{\rho_{S_\theta}(x,y)}{2}<1.
\end{align*}
\end{corollary}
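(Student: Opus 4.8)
The plan is to derive Corollary~\ref{cor_rhodeltaWithS} by feeding the sharp sector estimates of Theorem~\ref{thm_rhos} into the general relations of Corollary~\ref{cor_deltasj}, and then cleaning up the resulting expressions with elementary monotonicity arguments about $\mathrm{th}$ and $\mathrm{arth}$. The two pieces of input are: (i) the chain in Corollary~\ref{cor_deltasj}(1)--(2), which bounds $\mathrm{th}(\delta_G(x,y)/2)$ from below by $s_G(x,y)/2$ and from above by $2s_G(x,y)$ for \emph{any} admissible $G$ (and by $s_G/\sqrt 2$ from below and $\mathrm{th}\,\delta_G\le$ via $s_G\le\mathrm{th}\,\delta_G$ if $G$ is convex); and (ii) Theorem~\ref{thm_rhos}, which relates $s_{S_\theta}(x,y)$ to $\mathrm{th}(\rho_{S_\theta}(x,y)/2)$ with the explicit factor $(\pi/\theta)\sin(\theta/2)$ when $\theta<\pi$ and the factor $\pi/\theta$ when $\theta>\pi$. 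The sector $S_\theta$ is convex precisely when $\theta\le\pi$, so in case~(1) we may use parts (3) and (4) of Corollary~\ref{cor_deltasj}, whereas in case~(2) only the non-convex parts (1)--(2) are available; this is exactly why the two cases look structurally different.

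For case~(1), $0<\theta<\pi$: the upper bound $\delta_{S_\theta}(x,y)\le 2\rho_{S_\theta}(x,y)$ is already immediate from Theorem~\ref{thm_deltaj} together with the left inequality $j_G\le \delta_G$ being irrelevant here — rather, one uses $\delta_G\le 2j_G$, then $j_G\le k_G\le 2\rho_G$ via Theorems~\ref{thm_jk} and \ref{thm_krho}, giving $\delta_G\le 4\rho_G$; to get the sharper constant $2$ one instead argues through Corollary~\ref{cor_deltasj}(2): $\mathrm{th}(\delta/2)\le 2s_{S_\theta}\le 2\,\mathrm{th}(\rho/2)$ is too weak, so the intended route is $\delta_{S_\theta}\le 2 j_{S_\theta}$ and then the convex-case bound $s_{S_\theta}\ge \mathrm{th}(j_{S_\theta}/2)$ from Lemma~\ref{lem_sthj} combined with $s_{S_\theta}\le \mathrm{th}(\rho_{S_\theta}/2)$ from Theorem~\ref{thm_rhos}(1), which yields $\mathrm{th}(j_{S_\theta}/2)\le \mathrm{th}(\rho_{S_\theta}/2)$, hence $j_{S_\theta}\le \rho_{S_\theta}$ and therefore $\delta_{S_\theta}\le 2\rho_{S_\theta}$. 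For the lower bound, start from Corollary~\ref{cor_deltasj}(3), $\mathrm{th}(\delta_{S_\theta}/2)\ge s_{S_\theta}(x,y)/\sqrt2$, and from Corollary~\ref{cor_deltasj}(4), $\mathrm{th}(\delta_{S_\theta})\ge s_{S_\theta}(x,y)$; then invoke the left inequality of Theorem~\ref{thm_rhos}(1) in the form $s_{S_\theta}(x,y)\ge (\theta/(\pi\sin(\theta/2)))\,\mathrm{th}(\rho_{S_\theta}(x,y)/2)$. Substituting and applying $\mathrm{arth}$ (which is increasing) to both resulting inequalities gives the two lower bounds $\delta_{S_\theta}\ge 2\,\mathrm{arth}\big(\tfrac{\theta}{\sqrt2\pi\sin(\theta/2)}\mathrm{th}\tfrac{\rho_{S_\theta}}{2}\big)$ and $\delta_{S_\theta}\ge \mathrm{arth}\big(\tfrac{\theta}{\pi\sin(\theta/2)}\mathrm{th}\tfrac{\rho_{S_\theta}}{2}\big)$, whose maximum is the claimed bound; one should note in passing that the argument of $\mathrm{arth}$ stays in $[0,1)$ because $\theta/(\pi\sin(\theta/2))\le 2/\pi\cdot\text{something}$ — more carefully, because $s_{S_\theta}<1$ always, so each $\mathrm{arth}$ is well-defined.

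For case~(2), $\pi<\theta<2\pi$: here $S_\theta$ is not convex, so only Corollary~\ref{cor_deltasj}(2) is available, i.e. $s_{S_\theta}(x,y)/2\le \mathrm{th}(\delta_{S_\theta}(x,y)/2)\le 2s_{S_\theta}(x,y)$. Now plug in Theorem~\ref{thm_rhos}(3): the left half of that theorem gives $\mathrm{th}(\rho_{S_\theta}/2)\le s_{S_\theta}$, i.e. $s_{S_\theta}\ge \mathrm{th}(\rho_{S_\theta}/2)$, so $\mathrm{th}(\delta_{S_\theta}/2)\ge \tfrac12 s_{S_\theta}\ge \tfrac12\mathrm{th}(\rho_{S_\theta}/2)$, and applying $2\,\mathrm{arth}$ yields $\delta_{S_\theta}\ge 2\,\mathrm{arth}\big(\tfrac{\mathrm{th}(\rho_{S_\theta}/2)}{2}\big)$; the right half of Theorem~\ref{thm_rhos}(3) gives $s_{S_\theta}\le (\theta/\pi)\,\mathrm{th}(\rho_{S_\theta}/2)$, so $\mathrm{th}(\delta_{S_\theta}/2)\le 2s_{S_\theta}\le 2(\theta/\pi)\mathrm{th}(\rho_{S_\theta}/2)$, but this has a factor $2\theta/\pi$ which can exceed $1$, so one cannot directly apply $\mathrm{arth}$; instead one first uses $\mathrm{th}(\delta_{S_\theta}/2)\le \mathrm{th}(2\,\mathrm{arth}(s_{S_\theta}))$ from Corollary~\ref{cor_deltasj}(1)--(2) (the middle term there), i.e. $\delta_{S_\theta}\le 4\,\mathrm{arth}(s_{S_\theta})$, and then $s_{S_\theta}\le (\theta/\pi)\mathrm{th}(\rho_{S_\theta}/2)$ gives $\delta_{S_\theta}\le 4\,\mathrm{arth}\big(\tfrac{\theta}{\pi}\mathrm{th}\tfrac{\rho_{S_\theta}}{2}\big)$, where now $\theta/\pi<2$ but $(\theta/\pi)\mathrm{th}(\rho_{S_\theta}/2)$ could still reach or exceed $1$ when $\theta$ is close to $2\pi$ — this is the one genuine subtlety. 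I expect the main obstacle to be exactly this domain-of-definition issue: one must check that $(\theta/\pi)\,\mathrm{th}(\rho_{S_\theta}(x,y)/2)<1$ for all $x,y\in S_\theta$, which should follow from the sharper fact that $(\theta/\pi)\,\mathrm{th}(\rho_{S_\theta}/2)\le s_{S_\theta}\cdot(\text{bounded factor})<1$, or directly from $s_{S_\theta}<1$ together with Theorem~\ref{thm_rhos}(3); once that is in place, everything else is a routine chain of monotone substitutions. Finally, I would remark that, as stated before the corollary, these constants are not claimed to be sharp — they merely need to be strong enough to combine with the sector cross-ratio analysis of Section~\ref{sct4} to produce Theorem~\ref{thm_finalfordeltaS}.
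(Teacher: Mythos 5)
Your argument is correct and is essentially the paper's own proof: both cases are obtained by feeding the sector estimates of Theorem \ref{thm_rhos} into Corollary \ref{cor_deltasj}, and even your detour through Lemma \ref{lem_sthj} to get $\delta_{S_\theta}\le 2j_{S_\theta}\le 2\rho_{S_\theta}$ is the same inequality chain as the paper's $\delta_{S_\theta}\le 4\,{\rm arth}(s_{S_\theta})\le 2\rho_{S_\theta}$, since $j^*_{S_\theta}={\rm th}(j_{S_\theta}/2)$. The only thing to drop is your proposed verification that $(\theta/\pi)\,{\rm th}(\rho_{S_\theta}(x,y)/2)<1$ for all $x,y$: this can in fact fail (take $\theta$ near $2\pi$ and $\rho_{S_\theta}(x,y)$ large), but no check is needed, because the substitution ${\rm arth}(s_{S_\theta})\le{\rm arth}\left((\theta/\pi)\,{\rm th}(\rho_{S_\theta}/2)\right)$ is valid whenever the right-hand argument is below $1$, and otherwise the stated upper bound is read as $+\infty$ and holds trivially.
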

\begin{proof}
Follows from Corollary \ref{cor_deltasj} and Theorem \ref{thm_rhos}.
\end{proof}

\begin{theorem}\label{thm_unisForS}\cite[Thm 1.7 \& Thm 1.8, p. 6]{linden}
An open sector $S_\theta$ is uniform with the constant $A_\theta$ that fulfills
\begin{align*}
&A_\theta=\frac{1}{\sin(\theta\slash2)}+1,\quad\text{if}\quad0<\theta\leq\pi,\quad\text{and}\\
&\max\left\{2,\frac{2\log(\tan(\theta\slash4))+\theta-\pi}{\log(1-2\cos(\theta\slash2))}\right\}\leq A_\theta\leq4\left(\frac{\theta}{2\pi-\theta}\right)^2\left(\frac{1}{\sin(\theta\slash2)}+1\right),\\
&\text{if}\quad\pi<\theta<2\pi.
\end{align*}
\end{theorem}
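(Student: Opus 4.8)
The plan is to work throughout with the quantity $A_\theta=\sup_{x\neq y}k_{S_\theta}(x,y)/j_{S_\theta}(x,y)$, whose finiteness is precisely the uniformity assertion, and to treat the two halves of the statement separately: an upper estimate for this quotient proves $S_\theta$ uniform with the quoted constant, while exhibiting pairs $(x_n,y_n)$ along which the quotient tends to a value $c$ forces $A_\theta\geq c$. Both $k_{S_\theta}$ and $j_{S_\theta}$ are invariant under the similarities preserving $S_\theta$ — the dilations $z\mapsto\lambda z$ and the reflection $z\mapsto e^{i\theta}\overline{z}$ in the ray $\arg z=\theta/2$ — so one may normalise $\min\{d_{S_\theta}(x),d_{S_\theta}(y)\}=1$ and assume symmetry about the bisector. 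The crucial device is the cone (logarithmic--polar) model: writing $z=e^{s}e^{i\phi}$ with $s\in\R$, $\phi\in(0,\theta)$, one has $d_{S_\theta}(z)=e^{s}\delta(\phi)$ with $\delta(\phi)=d_{S_\theta}(e^{i\phi})$ and $|dz|/d_{S_\theta}(z)=\sqrt{ds^2+d\phi^2}/\delta(\phi)$, so $k_{S_\theta}$ is isometric to this $\phi$-only conformal metric on the strip $\R\times(0,\theta)$; its geodesics satisfy a Clairaut relation in $s$, and in particular two points of equal modulus are joined by a geodesic confined to that circular arc, of length $\int d\phi/\delta(\phi)$. One records that $\delta(\phi)=\sin\phi$ for $\phi\in(0,\pi/2]$, $\delta(\phi)=\sin(\theta-\phi)$ symmetrically near $\phi=\theta$, $\delta(\phi)=1$ (the distance to the vertex) on the middle range when $\theta>\pi$, and $\max_\phi\delta(\phi)=\sin(\theta/2)$ exactly when $\theta\leq\pi$.

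For $0<\theta\leq\pi$ the sector is convex, so $d_{S_\theta}$ is concave along Euclidean segments; estimating the quasihyperbolic length of the segment $[x,y]$ and combining this with the cone model, I expect to obtain $k_{S_\theta}(x,y)\leq(1/\sin(\theta/2)+1)\,j_{S_\theta}(x,y)$. For sharpness I would test $x=\varepsilon e^{i\theta/2}$ on the bisector against $y=e^{i\varepsilon}$ near a bounding edge, $\varepsilon\to0$: then $d_{S_\theta}(x)=\varepsilon\sin(\theta/2)$ is the smaller boundary distance, so $j_{S_\theta}(x,y)=\log(1/\varepsilon)+O(1)$, whereas the geodesic must spend $\log(1/\varepsilon)/\sin(\theta/2)$ descending radially along the bisector and a further $\int_\varepsilon^{\theta/2}d\phi/\sin\phi=\log(1/\varepsilon)+O(1)$ turning towards the edge, giving $k_{S_\theta}(x,y)/j_{S_\theta}(x,y)\to1/\sin(\theta/2)+1$. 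The endpoint $\theta=\pi$ is the half-plane, where $k_{\uhp^2}=\rho_{\uhp^2}$ and $\sinh(\rho_{\uhp^2}(x,y)/2)=|x-y|/(2\sqrt{d_{\uhp^2}(x)d_{\uhp^2}(y)})$ yields $e^{\rho_{\uhp^2}(x,y)/2}\leq1+|x-y|/\min\{d_{\uhp^2}(x),d_{\uhp^2}(y)\}$, i.e.\ $k_{\uhp^2}\leq2j_{\uhp^2}$, with constant $2=1/\sin(\pi/2)+1$ sharp along points of equal height receding horizontally.

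For $\pi<\theta<2\pi$ the domain is non-convex and the difficulty sits at the reflex vertex: a segment joining two points that straddle it leaves $S_\theta$, so their geodesic must detour, sweeping around the vertex along a near-circular arc whose length is read off from $\int d\phi/\delta(\phi)$ in the model. For the lower bound I would first note that the near-antipodal pair $x=\rho e^{i\alpha}$, $y=\rho e^{i(\theta-\alpha)}$ with $\alpha\to0$ behaves as in the half-plane and already gives $A_\theta\geq2$; then, optimising over configurations in which one point lies near each bounding edge and the joining geodesic wraps around the vertex, the arc-length integral splits into a flat middle portion contributing $\theta-\pi$ and two tails of the form $\int d\phi/\sin\phi$ contributing a $\log\tan$-term, while the Euclidean separation of the pair read against the relevant boundary distances supplies the denominator $\log(1-2\cos(\theta/2))$, producing $A_\theta\geq(2\log\tan(\theta/4)+\theta-\pi)/\log(1-2\cos(\theta/2))$. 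For the upper bound the convexity argument is unavailable; instead I would transfer the question to the half-plane via the conformal power map $w=z^{\pi/\theta}$, using $\rho_{S_\theta}(x,y)=\rho_{\uhp^2}(x^{\pi/\theta},y^{\pi/\theta})$, Theorem~\ref{thm_krho} and $k_{\uhp^2}=\rho_{\uhp^2}\leq2j_{\uhp^2}$ to obtain $k_{S_\theta}(x,y)\leq4\,j_{\uhp^2}(x^{\pi/\theta},y^{\pi/\theta})$, and then Koebe-type distortion estimates for $z\mapsto z^{\pi/\theta}$ — whose derivative $\tfrac{\pi}{\theta}|z|^{\pi/\theta-1}$ oscillates only by a bounded factor over each quasihyperbolic ball, since $d_{S_\theta}(z)\leq|z|$ — to bound $j_{\uhp^2}(x^{\pi/\theta},y^{\pi/\theta})$ by a constant multiple of $j_{S_\theta}(x,y)$; tracking the distortion through the vertex region is what produces the factor $4(\theta/(2\pi-\theta))^2(1/\sin(\theta/2)+1)$, in which $1/\sin(\theta/2)+1$ is recognised as the uniformity constant of the complementary convex sector $S_{2\pi-\theta}$.

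The step I expect to be the main obstacle is the control of quasihyperbolic geodesics near the reflex vertex when $\theta>\pi$: deciding for which pairs the geodesic bends around the corner rather than running essentially straight, pinning down the angular interval it sweeps, and estimating its length precisely enough that the extremal configurations reproduce the stated closed-form constants on both sides. The convex case and all the reductions above are, by comparison, routine one- or two-parameter optimisations.
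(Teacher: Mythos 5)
This statement is not proved in the paper at all: it is quoted verbatim from Lind\'en's thesis \cite[Thm 1.7 \& Thm 1.8]{linden}, so there is no internal argument to compare with, and your text has to stand on its own as a proof. As it stands it is a programme rather than a proof, and the two decisive steps are exactly the ones left as expectations. For $0<\theta\le\pi$ the theorem asserts the \emph{equality} $A_\theta=1/\sin(\theta/2)+1$, so you must prove $k_{S_\theta}(x,y)\le(1/\sin(\theta/2)+1)\,j_{S_\theta}(x,y)$ for \emph{all} pairs; your proposed route, ``estimating the quasihyperbolic length of the segment $[x,y]$'' using concavity of $d_{S_\theta}$, cannot yield this. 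Already in the half-plane ($\theta=\pi$), two points at height $1$ and Euclidean distance $R$ apart have a segment of quasihyperbolic length $R$, while $j\approx\log R$; the sharp constant is only seen along paths that rise away from the boundary, so the whole content of the upper bound (constructing near-optimal radial/arc paths in your cone model and proving a matching estimate valid for every configuration) is missing. Your sharpness example $x=\varepsilon e^{i\theta/2}$, $y=e^{i\varepsilon}$ is a good choice, but ``the geodesic must spend $\log(1/\varepsilon)/\sin(\theta/2)$ \ldots and a further $\log(1/\varepsilon)$'' is asserted, not proved: the naive bounds give only the maximum of the two terms, not their sum, and one needs an inequality of the type $\sqrt{ds^2+d\phi^2}/\delta(\phi)\ge |ds|/\sin(\theta/2)+\bigl(1/\delta(\phi)-1/\sin(\theta/2)\bigr)|d\phi|$ to make the sum appear.

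For $\pi<\theta<2\pi$ the gaps are worse. A lower bound on $A_\theta$ requires a \emph{lower} bound on $k_{S_\theta}$ for a concrete pair, whereas sweeping ``around the vertex along a near-circular arc'' only exhibits a path and hence an upper bound; you never identify the extremal pair, and the natural candidate your description suggests does not work. For points of equal modulus, symmetric about the bisector at angles $\phi_0$ and $\theta-\phi_0$, the arc is indeed minimizing and one gets $k/j=\bigl(-2\log\tan(\phi_0/2)+\theta-\pi\bigr)/\log\bigl(1+2\sin(\theta/2-\phi_0)/\sin\phi_0\bigr)$; at $\theta=3\pi/2$ this family never exceeds about $2.8$, while the stated bound is $\bigl(2\log\tan(3\pi/8)+\pi/2\bigr)/\log(1+\sqrt2)\approx3.78$. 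So the configuration producing Lind\'en's constant is a different one, and nothing in your sketch locates it. On the upper-bound side, the chain via $w=z^{\pi/\theta}$, Theorem \ref{thm_krho} and $k_{\uhp^2}=\rho_{\uhp^2}\le 2j_{\uhp^2}$ reduces everything to the claim $j_{\uhp^2}(x^{\pi/\theta},y^{\pi/\theta})\le C\,j_{S_\theta}(x,y)$ with an explicit $C$, and ``Koebe-type distortion estimates \ldots is what produces the factor $4(\theta/(2\pi-\theta))^2(1/\sin(\theta/2)+1)$'' is not an argument: the power map with exponent $\pi/\theta<1$ distorts distance ratios by unbounded factors near the vertex, and the precise constant is exactly what has to be established. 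Until the all-pairs upper bound in the convex case, the identification and lower-bound analysis of the extremal pair in the reflex case, and the quantitative transfer of $j$ under the power map are carried out, the proposal does not prove the theorem.
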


\begin{corollary}\label{cor_rhodeltaSuni}
For all points $x,y\in S_\theta$,
\begin{align*}
&(1)\quad\frac{\sin(\theta\slash2)}{2(1+\sin(\theta\slash2))}\rho_{S_\theta}(x,y)\leq\delta_{S_\theta}(x,y)\leq4\rho_{S_\theta}(x,y),
\quad\text{if}\quad0<\theta\leq\pi,\\
&(2)\quad\frac{1}{8}\left(\frac{2\pi}{\theta}-1\right)^2\frac{\sin(\theta\slash2)}{1+\sin(\theta\slash2)}\rho_{S_\theta}(x,y)\leq\delta_{S_\theta}(x,y)\leq4\rho_{S_\theta}(x,y)
\quad\text{if}\quad\pi<\theta<2\pi.
\end{align*}
\end{corollary}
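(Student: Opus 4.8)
The goal is to deduce Corollary \ref{cor_rhodeltaSuni} from Theorem \ref{thm_unisForS} in exactly the same way Corollary \ref{cor_deltauni} was deduced from Definition \ref{def_uni} — the sector is simply connected and uniform, so the general machinery applies and only the explicit arithmetic with the uniformity constant $A_\theta$ remains.

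First I would invoke Corollary \ref{cor_deltauni}: since $S_\theta$ is a simply connected domain in $\R^2$ and is uniform with uniformity constant $A_{S_\theta}$, we immediately get
\begin{align*}
\frac{1}{2A_{S_\theta}}\rho_{S_\theta}(x,y)\leq\delta_{S_\theta}(x,y)\leq4\rho_{S_\theta}(x,y)
\end{align*}
for all $x,y\in S_\theta$. The upper bound $4\rho_{S_\theta}(x,y)$ is already exactly what is claimed in both parts (1) and (2), so no further work is needed there. It remains only to bound $1/(2A_{S_\theta})$ from below using the information Theorem \ref{thm_unisForS} gives about $A_\theta$.

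For part (1), when $0<\theta\leq\pi$, Theorem \ref{thm_unisForS} gives the exact value $A_{S_\theta}=A_\theta=\frac{1}{\sin(\theta\slash2)}+1=\frac{1+\sin(\theta\slash2)}{\sin(\theta\slash2)}$, so $\frac{1}{2A_{S_\theta}}=\frac{\sin(\theta\slash2)}{2(1+\sin(\theta\slash2))}$, which is precisely the stated coefficient. For part (2), when $\pi<\theta<2\pi$, Theorem \ref{thm_unisForS} only gives an upper bound $A_\theta\leq4\left(\frac{\theta}{2\pi-\theta}\right)^2\left(\frac{1}{\sin(\theta\slash2)}+1\right)$; since in Corollary \ref{cor_deltauni} the coefficient $\frac{1}{2A_G}$ decreases in $A_G$, replacing $A_{S_\theta}$ by this upper bound only weakens the inequality, giving
\begin{align*}
\delta_{S_\theta}(x,y)\geq\frac{1}{2A_{S_\theta}}\rho_{S_\theta}(x,y)\geq\frac{1}{8}\left(\frac{2\pi-\theta}{\theta}\right)^2\frac{\sin(\theta\slash2)}{1+\sin(\theta\slash2)}\rho_{S_\theta}(x,y),
\end{align*}
and since $\left(\frac{2\pi-\theta}{\theta}\right)^2=\left(\frac{2\pi}{\theta}-1\right)^2$ this matches the claimed lower bound.

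This is essentially a bookkeeping argument with no genuine obstacle; the only point requiring a moment's care is the direction of the monotonicity in part (2) — one must verify that using the \emph{upper} bound on $A_\theta$ yields a valid (weaker) \emph{lower} bound on $\delta_{S_\theta}$, which is correct because $A\mapsto 1/(2A)$ is decreasing. One should also note that the sharper but less explicit lower bound for $A_\theta$ in Theorem \ref{thm_unisForS} is not needed here, consistent with the remark preceding the corollary that these constants are not claimed to be best possible.
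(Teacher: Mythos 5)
Your proposal is correct and follows exactly the route the paper takes: its proof of this corollary is simply ``Follows from Corollary \ref{cor_deltauni} and Theorem \ref{thm_unisForS},'' and your write-up supplies precisely the arithmetic (substituting the exact value of $A_\theta$ for $0<\theta\leq\pi$ and the upper bound on $A_\theta$, together with the monotonicity of $A\mapsto 1/(2A)$, for $\pi<\theta<2\pi$) that this citation implicitly relies on.
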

\begin{proof}
Follows from Corollary \ref{cor_deltauni} and Theorem \ref{thm_unisForS}.
\end{proof}
\section{M\"obius metric in open sector}\label{sct4}

In this section, our aim is to find ways to estimate the value of the M\"obius metric defined in an open sector $S_\theta$. To do this, we will study the supremum of the cross-ratio needed in the definition of the metric $\delta_{S_\theta}$, in both cases where the angle $\theta$ is less than $\pi$ and greater than $\pi$. The main result of this section is Corollary \ref{cor_deltaInS} but, in order to prove it, we need to consider several other results first.

\begin{proposition}\label{prop_abinSH}
$(1)$ If $x,y\in\uhp^2$ such that $|x|=|y|=r>0$ and $\arg(x)\leq\arg(y)$, then $\sup_{a,b\in \R}|a,x,b,y|=|r,x,-r,y|$.\newline
$(2)$ If $x,y\in i\R\cap\B^2$ such that ${\rm Im}(x)\leq{\rm Im}(y)$, then $\sup_{a,b\in S^1}|a,x,b,y|=|-i,x,i,y|$.
\end{proposition}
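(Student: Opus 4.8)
The plan is to prove both parts by exhibiting the supremum of the cross-ratio explicitly, the guiding principle being that the extremal pair of boundary points consists of the two ideal endpoints of the hyperbolic geodesic through $x$ and $y$, with the endpoint nearer $x$ playing the role of the variable $a$. For part $(1)$, write $x=re^{i\alpha}$ and $y=re^{i\beta}$ with $0<\alpha\le\beta<\pi$ (possible since $x,y\in\uhp^2$); then $x$ and $y$ lie on the circular arc $\{|z|=r\}\cap\uhp^2$, a hyperbolic geodesic of $\uhp^2$ with ideal endpoints $r$ and $-r$, and since $\arg x\le\arg y$ the endpoint nearer $x$ is $r$ and the one nearer $y$ is $-r$ --- exactly the pair in the claim. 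To prove optimality I would normalise by the M\"obius self-map $\psi(z)=(z-r)\slash(z+r)$ of $\uhp^2$, which sends $r\mapsto0$, $-r\mapsto\infty$, and, by Proposition~\ref{prop_sin1minuscos}, $re^{i\mu}\mapsto i\tan(\mu\slash2)$. Since $\psi$ restricts to a bijection of $\overline{\R}$ and the cross-ratio is M\"obius invariant, putting $p=\tan(\alpha\slash2)\le q=\tan(\beta\slash2)$, $u=\psi(a)$, $v=\psi(b)$ reduces the problem to maximising
\begin{align*}
|u,ip,v,iq|=\frac{(q-p)\,|u-v|}{\sqrt{u^2+p^2}\,\sqrt{v^2+q^2}}
\end{align*}
over $u,v\in\overline{\R}$.

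This two-variable maximisation is the heart of the matter, and the one genuine obstacle is that $u$ and $v$ are coupled through the factor $|u-v|$. I would clear it with the elementary inequality
\begin{align*}
p^2(u-v)^2\le(u^2+p^2)(v^2+q^2),
\end{align*}
valid for all real $u,v$ because the difference of the two sides equals $(uv+p^2)^2+(q^2-p^2)(u^2+p^2)\ge0$ (using $p\le q$), with the cases $u=\infty$ or $v=\infty$ settled by inspection. Hence $|u,ip,v,iq|\le(q-p)\slash p=|0,ip,\infty,iq|$ for all $u,v\in\overline{\R}$, so the supremum is attained at $(u,v)=(0,\infty)=(\psi(r),\psi(-r))$; as $r,-r\in\R$, pulling back gives $\sup_{a,b\in\R}|a,x,b,y|=|r,x,-r,y|$. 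For part $(2)$ I would transport the configuration to $\uhp^2$ by the Cayley map $T(z)=i(1+z)\slash(1-z)$, which carries $\B^2$ onto $\uhp^2$, $S^1$ onto $\R\cup\{\infty\}$, the diameter $i\R\cap\B^2$ onto the geodesic $\{|w|=1\}\cap\uhp^2$, and $\{-i,i\}$ onto $\{1,-1\}$ while preserving the order of arguments; applying part $(1)$ with $r=1$ and M\"obius invariance then yields $\sup_{a,b\in S^1}|a,x,b,y|=|-i,x,i,y|$.

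A shorter, though less self-contained, route bypasses the maximisation altogether: by Theorem~\ref{thm_seitminvariant} one has $\delta_{\uhp^2}=\rho_{\uhp^2}$ and $\delta_{\B^2}=\rho_{\B^2}$, and since $\log(1+\cdot)$ is increasing this already fixes $\sup_{a,b\in\partial\uhp^2}|a,x,b,y|=e^{\rho_{\uhp^2}(x,y)}-1$ together with the analogous formula over $\partial\B^2$; it then remains only to check that the candidate pair attains this value --- that in $(1)$ both $|r,x,-r,y|=2r\,|x-y|\slash(|r-x|\,|r+y|)$ and $e^{\rho_{\uhp^2}(x,y)}-1$ equal $\sin((\beta-\alpha)\slash2)\slash(\sin(\alpha\slash2)\cos(\beta\slash2))$, via ${\rm th}(\rho_{\uhp^2}(x,y)\slash2)=|(x-y)\slash(x-\overline{y})|$, and that in $(2)$ both $|-i,x,i,y|$ and $e^{\rho_{\B^2}(x,y)}-1$ equal $2(t-s)\slash((1+s)(1-t))$ when $x=is$, $y=it$ --- and then to observe that, since $r,-r\in\R$ and $\pm i\in S^1$, the restricted suprema cannot fall below the full-boundary ones. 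In either route the only point demanding care is routine bookkeeping: keeping the half-angles in the ranges where the relevant sines and cosines are positive, so that all the moduli simplify without residual absolute values.
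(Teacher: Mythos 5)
Your argument is correct, and your primary route is genuinely different from the paper's. The paper proves both parts by leaning on Theorem \ref{thm_seitminvariant} ($\delta_G=\rho_G$ for $G\in\{\uhp^n,\B^n\}$): it verifies, via the identities $|x-\overline{y}|-|x-y|=(1\slash r)|x-r||y+r|$ in the half-plane and $|1-x\overline{y}|-|x-y|=|x+i||y-i|$ in the disk, that $\log(1+|r,x,-r,y|)=\rho_{\uhp^2}(x,y)$ and $\log(1+|-i,x,i,y|)=\rho_{\B^2}(x,y)$, so the candidate boundary pair attains the supremum defining $\delta$. This is exactly your second, ``shorter'' route (the paper handles the disk case directly rather than transporting it by the Cayley map, a cosmetic difference). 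Your first route, by contrast, is self-contained: after normalising by $\psi(z)=(z-r)\slash(z+r)$ you bound the cross-ratio by the explicit inequality $p^2(u-v)^2\le(u^2+p^2)(v^2+q^2)$ — your identity for the difference, $(uv+p^2)^2+(q^2-p^2)(u^2+p^2)$, checks out, as do the $u=\infty$, $v=\infty$ cases and the attained value $(q-p)\slash p=|0,ip,\infty,iq|$ at $(\psi(r),\psi(-r))$ — so you never invoke $\delta=\rho$; in effect you reprove the relevant special case of Seittenranta's theorem at the cost of some elementary algebra, which buys independence from Theorem \ref{thm_seitminvariant} and exhibits the maximiser by a direct estimate. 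Two minor bookkeeping points, neither affecting correctness: in part (2) the appeal to part (1) really requires the supremum over $\R\cup\{\infty\}$ (the Cayley image of $S^1$ is $\overline{\R}$), which your maximisation over $\overline{\R}$ does supply since the extremal points are finite; and the degenerate case $\arg x=\arg y$ (so $x=y$, both sides zero) is trivially covered.
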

\begin{proof}
Since $\delta_{\uhp^2}(x,y)=\sup_{a,b\in\R}\log(1+|a,x,b,y|)$, both results can be verified by the fact that $\delta_G=\rho_G$ for $G\in\{\B^n,\uhp^n\}$ according to Theorem \ref{thm_seitminvariant}.
\end{proof}

\begin{lemma}\label{lem_smallsectorontolens}
For all points $x,y$ in an open sector $S_\theta$ with an angle $0<\theta<\pi$ such that $\arg(x)\leq\arg(y)$ and $|x|=|y|=r>0$, there is a M\"obius transformation $f$ that maps $S_\theta$ onto the lens-shaped domain
\begin{align*}
B^2((1-u)i,u)\cap B^2((u-1)i,u),\quad u=\frac{1}{1-\cos(\theta\slash2)}>1, \end{align*}
and $x,y$ into $f(x),f(y)\in i\R\cap\B^2$ so that ${\rm Im}(f(x))\leq{\rm Im}(f(y))$.
\end{lemma}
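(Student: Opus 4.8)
The plan is to build the Möbius transformation $f$ as a composition of two elementary maps: first a power map that ``straightens'' the sector, then a Cayley-type transformation that places the image nicely, and finally a rotation/translation normalising everything. First I would apply the conformal power map $w\mapsto w^{\pi/(2\theta)}$... wait — more precisely, since we want a \emph{Möbius} transformation (not merely conformal) and the sector $S_\theta$ with $\theta<\pi$ is not itself Möbius-equivalent to a half-plane, the trick must be that the specific pair of points $x,y$ with $|x|=|y|=r$ and the circular arc through them is what gets used. So the real plan is: the two boundary rays of $S_\theta$ are arcs of circles (lines) through $0$ and $\infty$; a Möbius map $g$ sending $0\mapsto -i$ and $\infty\mapsto i$ (for instance $g(w)=\frac{w-\text{(something)}}{\dots}$ chosen so the two rays land on two circular arcs joining $-i$ and $i$) turns $S_\theta$ into the lens bounded by two circular arcs from $-i$ to $i$. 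The angle of the lens equals the angle $\theta$ between the original rays, by conformality of $g$ at $0$. One then checks that the lens symmetric about the imaginary axis with half-angle $\theta/2$ at each vertex is exactly $B^2((1-u)i,u)\cap B^2((u-1)i,u)$ with $u=1/(1-\cos(\theta/2))$: the circle $S^1((1-u)i,u)$ passes through $\pm i$ (distance from $(1-u)i$ to $i$ is $|1-u-1|=u-1$... so one must instead verify $|(1-u)i-i|=|{-}ui|=u$, i.e. the center is chosen so both $i$ and $-i$ lie on it — a short computation pins down $u$), and the tangent direction of that circle at $i$ makes the correct angle with the imaginary axis.

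Next I would arrange the two marked points. After $g$, the points $g(x),g(y)$ lie inside this lens. I would precompose $g$ with a preliminary Möbius self-map of $S_\theta$ (a real scaling $w\mapsto \lambda w$ composed with the rotation-type maps fixing $0$ and $\infty$) — or rather, postcompose with a Möbius self-map of the lens — to move $g(x),g(y)$ onto the imaginary axis $i\R\cap\B^2$. The key observation is that $x$ and $y$ lie on the circular arc $S^1(0,r)\cap S_\theta$, which is orthogonal to both boundary rays; this arc is the hyperbolic geodesic of $S_\theta$ perpendicular to $\partial S_\theta$ in the appropriate sense, and it gets mapped by $g$ to the circular arc through $g(x),g(y)$ orthogonal to both bounding arcs of the lens — which by the symmetry of the lens is precisely the imaginary axis segment $i\R\cap\B^2$. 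Then the ordering $\arg(x)\le\arg(y)$ translates, after tracking orientation, into $\operatorname{Im}(f(x))\le\operatorname{Im}(f(y))$ (possibly after an extra reflection $w\mapsto -w$, which is a Möbius transformation and preserves the lens and the imaginary axis).

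\textbf{Carrying it out.} The concrete steps, in order: (1) Define $f$ explicitly. A clean choice: rotate $S_\theta$ so it becomes symmetric about a ray, apply the Möbius map $T(w)=\frac{w-1}{w+1}$ (or a conjugate of it) which sends the two rays to two circular arcs, and record the image. Invoking Proposition \ref{prop_sin1minuscos} at this point will identify the endpoints and centers of the image arcs cleanly — that proposition computes exactly the ratio $\frac{1+e^{\mu i}}{1-e^{\mu i}}$ which is what one gets when feeding the ray $\{re^{i\mu}:r>0\}$ into a map of the form $\frac{1+w}{1-w}$, so it hands us the image ray/arc direction. (2) Compute $u$ by demanding the image circles pass through $\pm i$ and have the half-opening angle $\theta/2$; this is where Proposition \ref{prop_sin1minuscos} yields $u=\frac{1}{1-\cos(\theta/2)}$, and one confirms $u>1$ since $0<\cos(\theta/2)<1$ for $0<\theta<\pi$. (3) Check that the bisecting geodesic arc $S^1(0,r)\cap S_\theta$ maps to $i\R\cap\B^2$ using conformality (orthogonality to both boundary pieces is preserved) plus the symmetry of the target lens across $i\R$. (4) Fix the sign of the imaginary parts by composing with $w\mapsto-w$ if needed.

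\textbf{Main obstacle.} The routine-but-fiddly part is the bookkeeping in step (1)–(2): getting the exact normalising pre- and post-rotations right so that the image is \emph{exactly} the stated lens $B^2((1-u)i,u)\cap B^2((u-1)i,u)$ with the displayed value of $u$, rather than some rotated or rescaled copy. The genuinely substantive point — the one that makes the lemma true and not just plausible — is step (3): that the circular arc $|x|=|y|=r$ is carried to the imaginary diameter. This rests on that arc being characterised intrinsically (as the locus orthogonal to both boundary rays through a given radius, equivalently a hyperbolic geodesic of $S_\theta$), so that its Möbius image is forced to be the unique arc orthogonal to both lens boundaries, which symmetry identifies with $i\R\cap\B^2$. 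I expect verifying this orthogonality/symmetry argument cleanly, without hand-waving, to be the crux; everything else is elementary computation with the half-angle formulas already packaged in Proposition \ref{prop_sin1minuscos}.
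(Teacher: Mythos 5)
There is a genuine gap here, and it is not the ``bookkeeping'' you flag at the end: your chosen normalisation is incompatible with the stated target domain. You anchor the map by $0\mapsto -i$, $\infty\mapsto i$ and describe the image as a lens ``bounded by two circular arcs from $-i$ to $i$''. But the lens $B^2((1-u)i,u)\cap B^2((u-1)i,u)$ does not have its vertices at $\pm i$: the two circles $S^1((1-u)i,u)$ and $S^1((u-1)i,u)$ have centres on the imaginary axis and intersect on the \emph{real} axis, at $\pm\sqrt{2u-1}$, which are the vertices of the lens; the point $i$ lies only on $S^1((1-u)i,u)$ and $-i$ only on $S^1((u-1)i,u)$ (your parenthetical claim that both $i$ and $-i$ lie on one of them is false, since $|(1-u)i-(-i)|=|2-u|\neq u$ for $u>1$). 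Hence $u$ cannot be pinned down by ``demanding the image circles pass through $\pm i$''; it comes from the angle condition that each arc meets $\R$ at angle $\theta/2$. More seriously, with your anchoring the decisive step (3) fails: if $0$ and $\infty$ go to $\mp i$, the boundary rays map to arcs of the elliptic pencil of circles through $\pm i$, and the image of $S^1(0,r)$ (orthogonal to both rays) lies in the orthogonal, Apollonius-type pencil with limit points $\pm i$; such a circle does not pass through $\pm i$, and in that configuration the imaginary axis is the line through your lens's vertices, meeting the bounding arcs at angle $\theta/2\neq\pi/2$, so no symmetry argument can identify the image of $S^1(0,r)$ with $i\R\cap\B^2$. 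Your image domain is a rotated and rescaled copy of the stated lens, and any M\"obius map correcting this also moves the images of $x,y$, so the proposal as written proves neither the ``onto the stated lens'' claim nor $f(x),f(y)\in i\R\cap\B^2$.

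The paper anchors the map the other way around, and that is what makes both conclusions immediate: it sends the three points $r$, $re^{i\theta/2}$, $re^{i\theta}$ of the circle $S^1(0,r)$ to $-i$, $0$, $i$, explicitly $f(z)=\frac{-i(1+e^{i\theta/2})(z-re^{i\theta/2})}{(1-e^{i\theta/2})(z+re^{i\theta/2})}$. Since three points determine a circle, $S^1(0,r)$ is carried onto the imaginary axis, so $f(x),f(y)\in i\R\cap\B^2$ with ${\rm Im}(f(x))\leq{\rm Im}(f(y))$, while Proposition \ref{prop_sin1minuscos} gives $f(0)=-f(\infty)=-\sin(\theta/2)/(1-\cos(\theta/2))=-\sqrt{2u-1}$, i.e.\ the lens vertices sit symmetrically on the real axis; the boundary rays then go to the two arcs through these vertices containing $-i$ and $i$ respectively, and the requirement that each arc meet $\R$ at angle $\theta/2$ yields $u=1/(1-\cos(\theta/2))$. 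Your overall strategy (an explicit M\"obius map, Proposition \ref{prop_sin1minuscos}, tracking the circle $|z|=r$) is the right one, but you must anchor on the circle $|z|=r$ rather than on the vertex pair $\{0,\infty\}$ for the argument to go through.
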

\begin{proof}
Define the function $f:\overline{\C}\to\overline{\C}$, 
\begin{align}\label{functionF}
f(z)=\frac{-i(1+e^{\theta i\slash2})(z-re^{\theta i\slash2})}{(1-e^{\theta i\slash2})(z+re^{\theta i\slash2})}.     
\end{align}
Clearly, $f$ is the M\"obius transformation that fulfills
\begin{align}\label{mappingsOfF}
f(r)=-i,\quad
f(re^{\theta i\slash2})=0,\quad
f(re^{\theta i})=i,\quad
f(0)=\frac{-\sin(\theta\slash2)}{1-\cos(\theta\slash2)}=-f(\infty).
\end{align}
By the general properties of M\"obius transformations, $f$ preserves the angles and must turn a line into a circle if any three points chosen from it are no longer collinear after the transformation. Thus, $f$ maps two sides of the sector $S_\theta$ onto two circular arcs that are symmetric with respect to the both coordinate axes, meet each other in the points $f(0)$ and $f(\infty)$ at an angle of $\theta$, and out of which one contains the point $i$ and the other one the point $-i$. See Figure \ref{fig1}. 

Consider a circle $S^1((1-u)i,u)$, $u>1$. Clearly, it is symmetric with respect to the coordinate axes and $i\in S^1((1-u)i,u)$. Using simple trigonometry, it can be calculated that the two interior angles of the figure consisting of the real axis and the circular arc $S^1((1-u)i,u)\cap\uhp^2$ are
\begin{align*}
\frac{\pi}{2}-\arcsin\left(\frac{u-1}{u}\right)\in\left(0,\frac{\pi}{2}\right).    
\end{align*}
If the value of this angle is $\theta\slash2$, we can solve that
\begin{align}\label{ucos}
u=\frac{1}{1-\cos(\theta\slash2)}.   
\end{align}

By combining all our observations made above, we will have that, for all $0<\theta<\pi$, 
\begin{align*}
f(\partial S_\theta)&=(S^1((1-u)i,u)\cap\uhp^2)\cup(S^1((u-1)i,u)\backslash\uhp^2),\\
f(S_\theta)&=B^2((1-u)i,u)\cap B^2((u-1)i,u),
\end{align*}
where $u$ is as in \eqref{ucos}. The final part of the lemma is very trivial: From the behaviour of the points in \eqref{mappingsOfF}, we see that the transformation $f$ maps the circle $S^1(0,r)$ onto the imaginary axis and, if $x,y\in S^1(0,r)$ such that $0<\arg(x)\leq\arg(y)<\theta$, then clearly $f(x),f(y)\in[-i,i]$ so that $-1<{\rm Im}(f(x))\leq{\rm Im}(f(y))<1$.
\end{proof}

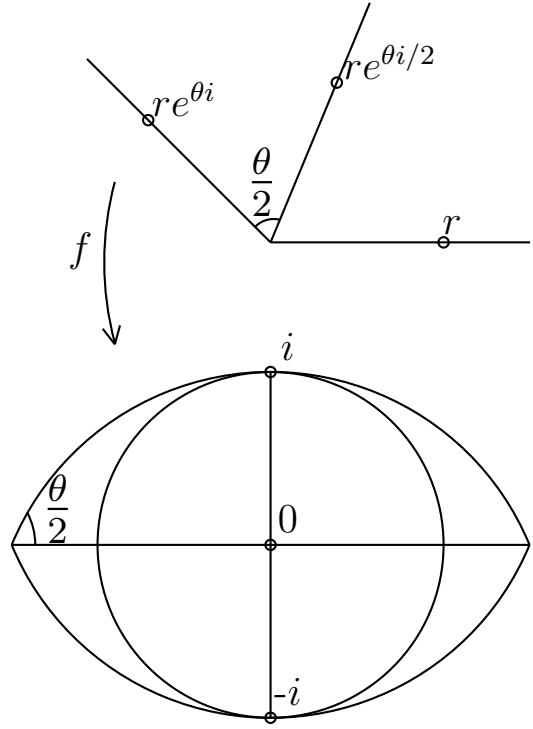
\begin{figure}[ht]
    \centering
    \begin{tikzpicture}[scale=2.3]
    \draw[thick] (0,1.75) -- (-1.061,2.811);
    \draw[thick] (0,1.75) -- (1.5,1.75);
    \draw[thick] (0,1.75) -- (0.574,3.136);
    \draw[thick] (1,1.75) circle (0.03cm);
    \node[scale=1.3] at (1.05,1.85) {$r$};
    \draw[thick] (0.383,2.674) circle (0.03cm);
    \node[scale=1.3] at (0.69,2.8) {$re^{\theta i\slash2}$};
    \draw[thick] (-0.707,2.457) circle (0.03cm);
    \node[scale=1.3] at (-0.5,2.58) {$re^{\theta i}$};
    \draw[thick] (0.057,1.88) arc (75:137:0.15);
    \node[scale=1.8] at (-0.05,2.1) {$\frac{\theta}{2}$};
    \draw[thick] (-0.9,2.1) arc (165:195:1.8);
    \draw[thick] (-0.9,1.16) -- (-0.87,1.27);
    \draw[thick] (-0.9,1.16) -- (-0.97,1.25);
    \node[scale=1.3] at (-1.1,1.7) {$f$};
    \draw[thick] (0,-1) -- (0,1);
    \draw[thick] (-1.497,0) -- (1.497,0);
    \draw[thick] (0,0) circle (1cm);
    \draw[thick] (1.497,0) arc (22.5:157.5:1.620);
    \draw[thick] (1.497,0) arc (-22.5:-157.5:1.620);
    \draw[thick] (0,1) circle (0.03cm);
    \node[scale=1.3] at (0.1,1.15) {$i$};
    \draw[thick] (0,0) circle (0.03cm);
    \node[scale=1.3] at (0.1,0.15) {$0$};
    \draw[thick] (0,-1) circle (0.03cm);
    \node[scale=1.3] at (0.1,-0.85) {-$i$};
    \draw[thick] (-1.36,0) arc (0:28:0.4);
    \node[scale=1.8] at (-1.23,0.21) {$\frac{\theta}{2}$};
    \end{tikzpicture}
    \caption{Sector $S_\theta$ before and after the M\"obius transformation $f$ defined in \eqref{functionF}, when $\theta=3\pi\slash4$.}
    \label{fig1}
\end{figure}

The result of Lemma \ref{lem_smallsectorontolens} is very useful because it follows from the M\"obius invariance of the cross-ratio that the value of the M\"obius metric between $x,y\in S_\theta$ can be calculated in the lens-shaped symmetric domain $f(S_\theta)$ for $f(x),f(y)$,
see Figure \ref{fig1}.

\begin{theorem}\label{thm_deltaPointslessthanpi}
For all $0<\theta<\pi$ and $x,y\in S_\theta$ such that $\arg(x)\leq\arg(y)$ and $|x|=|y|=r>0$, the supremum $\sup_{a,b\in\partial S_\theta}|a,x,b,y|$ is given by the points $a=r$ and $b=re^{\theta i}$.
\end{theorem}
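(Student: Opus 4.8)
The plan is to transport the problem, via Lemma~\ref{lem_smallsectorontolens}, to the lens-shaped domain $L:=f(S_\theta)$ and then to exploit that $L$ contains the unit disc. First I apply the M\"obius transformation $f$ of \eqref{functionF}. Since $f$ is a homeomorphism of $\overline{\C}$ with $f(\partial S_\theta)=\partial L$ and the cross-ratio is M\"obius invariant,
\[
\sup_{a,b\in\partial S_\theta}|a,x,b,y|=\sup_{a',b'\in\partial L}\bigl|a',f(x),b',f(y)\bigr|.
\]
By Lemma~\ref{lem_smallsectorontolens}, $L=B^2((1-u)i,u)\cap B^2((u-1)i,u)$ with $u=1/(1-\cos(\theta\slash2))>1$, the points $f(x),f(y)$ lie on the segment $[-i,i]\subset i\R\cap\B^2$ with ${\rm Im}(f(x))\le{\rm Im}(f(y))$, and by \eqref{mappingsOfF} the boundary points $r$ and $re^{\theta i}$ of $S_\theta$ satisfy $f(r)=-i$ and $f(re^{\theta i})=i$. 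Thus the theorem follows once I show
\[
\sup_{a',b'\in\partial L}\bigl|a',f(x),b',f(y)\bigr|=\bigl|-i,f(x),i,f(y)\bigr|,
\]
the right-hand side being attained at $a'=-i$, $b'=i$.

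Next I would establish that $\B^2\subseteq L$. For $|z|\le 1$ one has $|z-(1-u)i|^2=|z|^2+2(u-1){\rm Im}(z)+(u-1)^2\le 1+2(u-1)+(u-1)^2=u^2$, with equality only at $z=i$, so $\B^2\subset B^2((1-u)i,u)$; reflecting across the real axis gives $\B^2\subset B^2((u-1)i,u)$, hence $\B^2\subset L$. In particular $\partial L$, being disjoint from the open set $L$, is contained in $\overline{\C}\setminus\B^2$, that is, $|a'|\ge 1$ and $|b'|\ge 1$ for all $a',b'\in\partial L$. I would then combine this with a maximum-principle argument: for fixed $w\in\B^2$ and fixed $c$ with $|c|\ge 1$, the function $t\mapsto\log|t-c|-\log|t-w|$ is harmonic on $\{|t|>1\}$ away from $c$ and tends to $0$ as $t\to\infty$, so $\sup_{|t|\ge1}\frac{|t-c|}{|t-w|}=\sup_{t\in S^1}\frac{|t-c|}{|t-w|}$; applying this first in the variable $a'$ and then in $b'$ yields
\[
\sup_{a',b'\in\partial L}\bigl|a',f(x),b',f(y)\bigr|\le\sup_{|a'|\ge1,\,|b'|\ge1}\bigl|a',f(x),b',f(y)\bigr|=\sup_{a,b\in S^1}\bigl|a,f(x),b,f(y)\bigr|.
\]
By Proposition~\ref{prop_abinSH}(2) the last supremum equals $|-i,f(x),i,f(y)|$. (Equivalently, this inequality is the monotonicity of the M\"obius metric under domain inclusion applied to $\B^2\subseteq L$, together with $\delta_{\B^2}=\rho_{\B^2}$ from Theorem~\ref{thm_seitminvariant}.)

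For the reverse inequality it is enough that $\pm i\in\partial L$, which holds because $-i=f(r)$ and $i=f(re^{\theta i})$ with $r,re^{\theta i}\in\partial S_\theta$. Hence the two suprema agree and the one over $\partial L$ is attained at $(a',b')=(-i,i)$; transporting back through $f$ and using M\"obius invariance once more gives $\sup_{a,b\in\partial S_\theta}|a,x,b,y|=|r,x,re^{\theta i},y|$, attained at $a=r$, $b=re^{\theta i}$. The single nontrivial ingredient is the maximum-principle step of the second paragraph; everything else is a short computation or a direct appeal to Lemma~\ref{lem_smallsectorontolens}, Proposition~\ref{prop_abinSH}(2), and Theorem~\ref{thm_seitminvariant}. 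Should one wish to avoid the maximum principle, the fallback is a direct optimization: after normalising $r=1$ and splitting $\partial S_\theta$ into its two bounding rays, the subcases with $a,b$ on the same ray reduce (via Proposition~\ref{prop_abinSH}(1) after a rotation, plus an elementary trigonometric inequality such as $\sin((\theta-\psi)\slash2)\le\sin(\theta\slash2)\cos(\psi\slash2)$) to the asserted bound, while the mixed subcase becomes a two-variable calculus problem in which $(r,re^{\theta i})$ is the unique interior critical point and the degenerate configurations, with a boundary point at the vertex $0$ or at $\infty$, are seen to be dominated.
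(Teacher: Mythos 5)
Your proposal is correct, and its overall skeleton coincides with the paper's: transport $x,y$ to the lens $L=f(S_\theta)$ via Lemma \ref{lem_smallsectorontolens}, show that the supremum of the cross-ratio over $\partial L$ is no larger than the supremum over $S^1$, invoke Proposition \ref{prop_abinSH}(2), and use $\pm i\in\partial L$ together with M\"obius invariance to transport back. Where you genuinely differ is in the central reduction step. The paper handles it by elementary means: the inequality \eqref{ine_uiv} (which tacitly uses that $f(\partial S_\theta)$ lies outside $\B^2$), the observation $|x-a|\le|a-b|$ at the extremum, and two triangle-inequality replacements \eqref{ine_x1}--\eqref{ine_y1} moving $a$ and $b$ onto $S^1$. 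You instead verify explicitly that $\B^2\subseteq L$ (your computation $|z-(1-u)i|^2\le u^2$ is correct; the two bounding circles are internally tangent to $S^1$ at $\pm i$), conclude $\partial L\subset\{|t|\ge1\}$, and then use, in each boundary variable separately, the subharmonicity of $\log|t-c|-\log|t-w|$ on the closed exterior of $\B^2$ (with the removable singularity at $\infty$ and the $-\infty$ singularity at $c$ causing no harm) to push the extremal $a',b'$ onto $S^1$; equivalently, as you note, this is domain monotonicity of $\delta$ applied to $\B^2\subseteq L$ plus $\delta_{\B^2}=\rho_{\B^2}$. Your route is more conceptual and makes explicit a geometric fact the paper only uses implicitly; it also generalizes immediately to any domain containing $\B^2$ with $\pm i$ on its boundary. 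The paper's route stays entirely at the level of the triangle inequality and avoids any appeal to potential theory or to the monotonicity of $\delta$ (which is not stated in the paper, so it is good that your maximum-principle argument is self-contained). The concluding fallback sketch is not needed and is only a sketch, but the main argument stands on its own.
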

\begin{proof}
Let $f$ be the M\"obius transformation defined in \eqref{functionF}, under which the open sector $S_\theta$ with an angle $0<\theta<\pi$ is mapped onto a lens-shaped domain $f(S_\theta)$. For all points $x,y\in i\R\cap\B^2$, choose $a,b\in f(\partial S_\theta)$ so that the cross-ratio $|a,x,b,y|$ is at greatest. Note that, for all points $u\in i\R\cap\B^2$ and $v\in\C\backslash\B^2$, the inequality
\begin{align}\label{ine_uiv}
\max\left\{\frac{|i-v|}{|u-i|},\frac{|-i-v|}{|u-(-i)|}\right\}\geq1,
\end{align}
holds. It follows from this that $|x-a|\leq|a-b|$ holds, because otherwise replacing $a$ either by $i$ or $-i$ would give a greater value for the cross-ratio $|a,x,b,y|$.

Fix now $a'\in[x,a]\cap S^1$. By the inequality $|x-a|\leq|a-b|$ and the triangle inequality,
\begin{align}\label{ine_x1}
\frac{|a-b|}{|x-a|}
\leq\frac{|a-b|-|a-a'|}{|x-a|-|a-a'|}
=\frac{|a-b|-|a-a'|}{|x-a'|}
\leq\frac{|a'-b|}{|x-a'|}.
\end{align}
Let us yet show that there is a point $b'\in S^1$ such that
\begin{align}\label{ine_y1}
\frac{|a'-b|}{|y-b|}
\leq\frac{|a'-b'|}{|y-b'|}.
\end{align}
If $|y-b|\leq|a'-b|$ holds for the point $b'\in[y,b]\cap S^1$, then the inequality \eqref{ine_y1} follows from the triangle inequality just like \eqref{ine_x1}. If $|y-b|>|a'-b|$ instead, then there is $b\in\{i,-i\}$ such that $|y-b'|\leq|a'-b'|$ by the inequality \eqref{ine_uiv} and the inequality \eqref{ine_y1} clearly holds for this choice of $b'$. 

Thus, if $a,b\in f(\partial S_\theta)$ give the supremum of $|a,x,b,y|$ for given points $x,y\in i\R\cap\B^2$ and $a',b'$ are chosen like above, it follows from the inequalities \eqref{ine_x1} and \eqref{ine_y1} that
\begin{align*}
|a,x,b,y|
\leq|a',x,b,y|
\leq|a',x,b',y|
\leq\sup_{a',b'\in S^1}|a',x,b',y|.
\end{align*}
By Proposition \ref{prop_abinSH}(2), if $x,y\in i\R\cap\B^2$ such that $\text{Im}(x)\leq\text{Im}(y)$, then $\sup_{a',b'\in S^1}|a',x,b',y|$ is given by $a=-i$ and $b=i$. Since $i,-i\in f(\partial S_\theta)\cap S^1$, it must hold that
\begin{align*}
\sup_{a,b\in f(\partial S_\theta)}|a,x,b,y|=|-i,x,i,y|.
\end{align*}
Because $f$ preserves the cross-ratio as a M\"obius transformation, we can now show that, for all $x,y\in S_\theta$ such that $|x|=|y|=r$ and $\arg(x)\leq\arg(y)$,
\begin{align*}
\sup_{a,b\in\partial S_\theta}|a,x,b,y|
=\sup_{a,b\in f(\partial S_\theta)}|a,f(x),b,f(y)|
=|-i,f(x),i,f(y)|
=|r,x,re^{\theta i},y|.
\end{align*}
\end{proof}

Note that Theorem \ref{thm_deltaPointslessthanpi} does not hold in the case $\theta>\pi$, as the following example shows.

\begin{example}\label{ex_crossratiooverpi}
For $x=e^{(1-k)\theta i\slash2}$ and $y=e^{(1+k)\theta i\slash2}$ with $0<k<1$ and $\pi<\theta<2\pi$,
\begin{align*}
\lim_{k\to0^+}\frac{|1,x,e^{\theta i},y|}{|0,x,\infty,y|} 
=\lim_{k\to0^+}\frac{\sin(\theta\slash2)}{2\sin^2((1-k)\theta\slash4)}
=\frac{\sin(\theta\slash2)}{2\sin^2(\theta\slash4)}
=\frac{\cos(\theta\slash4)}{\sin(\theta\slash4)}<1
\end{align*}
and it follows that $\sup_{a,b\in\partial S_\theta}|a,x,b,y|$ is not attained with $a=1$ and $b=e^{\theta i}$.
\end{example}

However, we can still use Lemma \ref{lem_smallsectorontolens} to calculate the supremum of the cross-ratio in the M\"obius metric for points $x,y$ in a sector $S_\theta$ with $\theta>\pi$, as can be seen from the following result.

\begin{figure}[ht]
    \centering
    \begin{tikzpicture}[scale=2.3]
    \draw[thick] (0,1.75) -- (1.5,1.75);
    \draw[thick] (1,1.75) circle (0.03cm);
    \node[scale=1.3] at (1.08,1.85) {$r$};
    \draw[thick] (0,1.75) -- (-1.061,0.689);
    \draw[thick] (-0.707,1.043) circle (0.03cm);
    \node[scale=1.3] at (-0.707,1.23) {$re^{\theta i}$};
    \draw[thick] (0,1.75) -- (-0.574,3.136);
    \draw[thick] (-0.383,2.674) circle (0.03cm);
    \node[scale=1.3] at (-0.05,2.8) {$re^{\theta i\slash2}$};
    \draw[thick] (1,2.5) arc (115:65:1.3);
    \draw[thick] (2.09,2.5) -- (2,2.45);
    \draw[thick] (2.09,2.5) -- (2.05,2.61);
    \node[scale=1.3] at (1.6,2.8) {$f$};
    \draw[thick] (0.668+3,1.75) arc (-22:202:0.723);
    \draw[thick] (-0.668+3,1.75) arc (158:382:0.723);
    \draw[thick] (-0.668+3,1.75) -- (0.668+3,1.75);
    \draw[thick] (3,2.74) circle (0.03cm);
    \node[scale=1.3] at (3.1,2.86) {$i$};
    \draw[thick] (3,1.75) circle (0.03cm);
    \node[scale=1.3] at (3.1,1.9) {$0$};
    \draw[thick] (3,-1+2.75) circle (0.03cm);
    \node[scale=1.3] at (0.1,-0.85+2.75) {$0$};
    \draw[thick] (3,-2+2.75) circle (0.03cm);
    \node[scale=1.3] at (3.1,-1.85+2.75) {-$i$};
    \draw[thick] (-0.09,1.94) arc (150:216:0.25);
    \node[scale=1.8] at (-0.23,1.85) {$\frac{\theta}{2}$};
    \draw[thick] (-0.568+3,1.75) arc (0:90:0.15);
    \node[scale=1.8] at (-0.57+3,-0.68+2.75) {$\frac{\theta}{2}$};
    \end{tikzpicture}
    \caption{Sector $S_\theta$ before and after the M\"obius transformation $f$ defined in \eqref{functionF},
    when $\theta=5\pi\slash4$.}
    \label{figN}
\end{figure}

\begin{corollary}
For any open sector $S_\theta$ with an angle $\pi<\theta<2\pi$, there is a M\"obius transformation $f$ that maps $S_\theta$ onto the domain
\begin{align*}
B^2((1-u)i,u)\cup B^2((u-1)i,u),\quad u=\frac{1}{1-\cos(\theta\slash2)}\in\left(\frac{1}{2},1\right), \end{align*}
and, for all $x,y\in S_\theta$,
\begin{align*}
\sup_{a,b\in\partial S_\theta}|a,x,b,y|
=\sup_{a,b\in f(\partial S_\theta)}|a,f(x),b,f(y)|.    
\end{align*}
\end{corollary}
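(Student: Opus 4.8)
The plan is to re-run the construction of Lemma~\ref{lem_smallsectorontolens} with the \emph{same} M\"obius transformation $f$ defined in \eqref{functionF}, but now track what happens when $\theta>\pi$. First I would record that $f$ still satisfies the interpolation data \eqref{mappingsOfF}: $f(r)=-i$, $f(re^{\theta i/2})=0$, $f(re^{\theta i})=i$, and $f(0)=-\sin(\theta/2)/(1-\cos(\theta/2))=-f(\infty)$ (Proposition~\ref{prop_sin1minuscos} is valid for all $0<\theta<2\pi$). As before, $f$ sends each side of $\partial S_\theta$ to a circular arc through $f(0)$ and $f(\infty)$, symmetric about both coordinate axes, one arc containing $i$ and the other $-i$; the arcs meet at angle $\theta$. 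The only difference from the sub-$\pi$ case is that now the angle exceeds $\pi$, so the two circular discs $B^2((1-u)i,u)$ and $B^2((u-1)i,u)$ are \emph{disjoint} except that their boundary circles are tangent at $0$ (equivalently $u<1$), and $f(S_\theta)$ is the union of the two open discs rather than their intersection. The key numerical point is to redo the trigonometry of Lemma~\ref{lem_smallsectorontolens}: the interior angle of the region bounded by the real axis and the arc $S^1((1-u)i,u)\cap\uhp^2$ is still $\tfrac\pi2-\arcsin\!\big(\tfrac{u-1}{u}\big)$, but for $\theta>\pi$ we need this quantity interpreted so that setting it equal to $\theta/2$ forces $(u-1)/u<0$, i.e.\ $u=\tfrac{1}{1-\cos(\theta/2)}$ with $\cos(\theta/2)<0$, hence $u\in(1/2,1)$ exactly as claimed. (When $\theta/2>\pi/2$ the ``angle'' one should use is $\pi-[\tfrac\pi2-\arcsin(\tfrac{u-1}u)]$, which rearranges to the same formula for $u$; this is the one sign-chasing step that must be done carefully.) Figure~\ref{figN} is the intended picture.

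Once the image domain is identified, the second assertion is immediate: $f$ is a M\"obius transformation, so it is a bijection $\overline{\C}\to\overline{\C}$ carrying $\partial S_\theta$ onto $f(\partial S_\theta)$, and by the M\"obius invariance of the cross-ratio (the defining property used throughout, cf.\ the cross-ratio formula \eqref{def_crossratio}) we have $|a,x,b,y|=|f(a),f(x),f(b),f(y)|$ for all admissible quadruples. Taking the supremum over $a,b\in\partial S_\theta$ and substituting $a\mapsto f(a)$, $b\mapsto f(b)$ as a change of variable over the boundary gives
\begin{align*}
\sup_{a,b\in\partial S_\theta}|a,x,b,y|=\sup_{a,b\in f(\partial S_\theta)}|a,f(x),b,f(y)|,
\end{align*}
which is the stated identity and, by \eqref{def_delta}, records that $\delta_{S_\theta}(x,y)=\delta_{f(S_\theta)}(f(x),f(y))$.

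The main obstacle is purely bookkeeping rather than conceptual: confirming that for $\pi<\theta<2\pi$ the transformation $f$ really produces the \emph{union} of the two discs (with the ``reflex'' side of each circle) and not some other configuration — in particular that the two bounding circles are internally tangent at $0=f(re^{\theta i/2})$ with $u<1$, and that the side of $\partial S_\theta$ through $re^{\theta i/2}$ maps to the correct arc. I would settle this by checking the images of a few test points on each side of $S_\theta$ (e.g.\ the rays $\arg z=0$ and $\arg z=\theta$), using \eqref{mappingsOfF} together with the fact that $f$ is orientation- and angle-preserving, and comparing with the $\theta=\pi$ limiting case where $u=1$ and the two discs coincide with $\B^2$, consistent with $f(S_\pi)=\B^2$ and Theorem~\ref{thm_seitminvariant}. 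No new estimate is needed; everything reduces to Lemma~\ref{lem_smallsectorontolens}'s argument with the inequality $u>1$ replaced by $1/2<u<1$.
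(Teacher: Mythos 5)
Your overall strategy is exactly the paper's: reuse the map $f$ of \eqref{functionF}, note that for $\theta>\pi$ the two image arcs meet at an angle greater than $\pi$ so that $f(S_\theta)$ is a union of disks with $1/2<u<1$, and deduce the supremum identity from the M\"obius invariance of the cross-ratio; that last part of your argument is correct and is all the paper itself says. However, your geometric verification of the first claim contains two concrete errors. First, for $u=1/(1-\cos(\theta/2))\in(1/2,1)$ the disks $B^2((1-u)i,u)$ and $B^2((u-1)i,u)$ are \emph{not} disjoint and their boundary circles are \emph{not} tangent at $0$: since $|0-(1-u)i|=1-u<u$, the origin is an interior point of both disks, and the two circles (reflections of one another in the real axis) intersect at the two real points $\pm\sqrt{2u-1}$, which are precisely $f(\infty)$ and $f(0)=-\sin(\theta/2)/(1-\cos(\theta/2))$; there the two boundary arcs of $f(S_\theta)$ (the arc through $i$ on the upper circle and the arc through $-i$ on the lower circle) meet at interior angle $\theta$, as in Figure \ref{figN}. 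As stated, your configuration would make the union of the two open disks disconnected, contradicting the fact that $f$ is a homeomorphism of $\overline{\C}$ and that $0=f(re^{\theta i/2})$ must lie in the interior of the image.

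Second, your ``sign-chasing'' correction of the angle equation is itself wrong: setting $\pi-\left[\pi/2-\arcsin\left((u-1)/u\right)\right]=\theta/2$ gives $(u-1)/u=-\cos(\theta/2)$, i.e.\ $u=1/(1+\cos(\theta/2))>1$, which is not the stated formula. No correction is needed: for $u<1$ the quantity $(u-1)/u$ is negative, so $\pi/2-\arcsin\left((u-1)/u\right)$ already lies in $(\pi/2,\pi)$, and equating it with $\theta/2$ yields \eqref{ucos} verbatim, giving $u=1/(1-\cos(\theta/2))\in(1/2,1)$. With these two points repaired, your proof coincides with the paper's, which simply observes that the argument of Lemma \ref{lem_smallsectorontolens} goes through with the arcs now meeting at angle $\theta>\pi$, so the image is the union of the two disks rather than their intersection, and concludes by cross-ratio invariance.
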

\begin{proof}
Let the M\"obius transformation $f$ be as in \eqref{functionF} with, for instance, $r=1$. The proof now goes just like that of Theorem \ref{thm_deltaPointslessthanpi}, but it must be noted that $f$ maps the sides of $S_\theta$ onto circular arcs that meet each other at an angle $\theta>\pi$. Thus, $f(S_\theta)$ must be an union of two disks $B^2((1-u)i,u)$ and $B^2((u-1)i,u)$ instead of their intersection and $1\slash2<u<1$ now, see Figure \ref{figN}. The final part of the proof follows from the M\"obius invariance of the cross-ratio.
\end{proof}

Several computational experiments support the next conjecture.

\begin{conjecture}
If $\pi<\theta<2\pi$ and $x=re^{(1-k)\theta i\slash2}$ and $y=re^{(1+k)\theta i\slash2}$ with $r>0$ and $0<k<1$, then
\begin{align*}
\sup_{a,b\in\partial S_\theta}|a,x,b,y|=\max\{|r,x,re^{\theta i},y|,|0,x,\infty,y|\}. 
\end{align*}
\end{conjecture}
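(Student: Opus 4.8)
\emph{Proof proposal.} The plan is to transport the problem, via the M\"obius map of \eqref{functionF}, to the two-disk model domain of the preceding Corollary, and then to optimise the cross-ratio there by combining the triangle-inequality ``pushing'' argument of Theorem~\ref{thm_deltaPointslessthanpi} with the reflection symmetry of the configuration.

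First I would apply $f$ from \eqref{functionF} with the parameter taken to be $r=|x|=|y|$. By Theorem~\ref{thm_seitminvariant} and the Corollary above, $f$ maps $S_\theta$ onto $D:=B^2((1-u)i,u)\cup B^2((u-1)i,u)$ with $u=1/(1-\cos(\theta/2))\in(1/2,1)$, and $\sup_{a,b\in\partial S_\theta}|a,x,b,y|=\sup_{a,b\in\partial D}|a,f(x),b,f(y)|$. Since $f(r)=-i$, $f(re^{\theta i/2})=0$, $f(re^{\theta i})=i$ are collinear, $f$ carries $S^1(0,r)$ onto the extended imaginary axis; and since $f$ conjugates the reflection $z\mapsto e^{\theta i}\bar z$ across the bisector $\arg=\theta/2$ (which interchanges $x$ and $y$) into complex conjugation, we obtain $f(x)=-i\lambda$ and $f(y)=i\lambda$ for some $\lambda=\lambda(k,\theta)\in(0,1)$. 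Writing $c:=\cot(\theta/4)\in(0,1)$ for the abscissa of the two corners $\{-c,c\}=\{f(0),f(\infty)\}$ of $D$, the two candidate boundary pairs become $\{f(r),f(re^{\theta i})\}=\{-i,i\}$ and $\{f(0),f(\infty)\}=\{-c,c\}$; moreover the functional $(a,b)\mapsto|a,f(x),b,f(y)|$ is invariant under the reflection $\sigma\colon w\mapsto-\bar w$, which fixes both $f(x)$ and $f(y)$ and maps $\partial D$ onto itself.

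Write $\partial D=A_+\cup A_-$, where $A_\pm$ is the outer arc of $C_\pm:=S^1(\pm(1-u)i,u)$, each joining $-c$ to $c$ and with $A_-=\overline{A_+}$. I would distinguish the case where $a,b$ lie on the same arc from the case $a\in A_-$, $b\in A_+$ (the reverse being symmetric by $\sigma$). In the former case, say both on $A_+\subset C_+$: one checks that $f(y)\in B^2((1-u)i,u)$ always and $f(x)\in B^2((1-u)i,u)$ precisely when $\lambda<c^2$, so transporting $B^2((1-u)i,u)$ to a half-plane and using $\delta_{\uhp^2}=\rho_{\uhp^2}$ (Theorem~\ref{thm_seitminvariant}) together with Proposition~\ref{prop_abinSH} yields a closed-form value for $\sup_{a,b\in C_+}|a,f(x),b,f(y)|$ (with a subdivision according to the sign of $\lambda-c^2$), which is then verified not to exceed the larger of the two candidates. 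In the main case $a\in A_-$, $b\in A_+$: for fixed $a$ the map $b\mapsto|a-b|/|b-f(y)|$ is the modulus of a M\"obius transformation, hence on the arc $A_+$ its supremum is attained at an endpoint $\pm c$ or at the unique interior critical point; the monotonicity used in \eqref{ine_x1}--\eqref{ine_y1} lets one slide $b$ along $A_+$ toward the top point $i$ or toward the nearer corner without decreasing the cross-ratio, and symmetrically for $a$ along $A_-$ toward $-i$. Combined with the symmetry observation (a unique extremal ordered pair must be $\sigma$-fixed, hence lie in $\{-i,i\}\times\{-i,i\}$, where $(-i,i)$ dominates), this reduces the problem to the pairs $\{-i,i\}$, $\{-c,c\}$ and the ``mixed'' pair $\{-i,c\}=\{f(r),f(\infty)\}$; the last is removed by verifying $|r,x,\infty,y|\le\max\{|r,x,re^{\theta i},y|,\,|0,x,\infty,y|\}$ in closed form, just as in Example~\ref{ex_crossratiooverpi}.

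The main obstacle is the pushing step for an \emph{interior} critical pair in the last case: one must exclude a maximiser $(a,b)$ with $a\in A_-$, $b\in A_+$ that lies neither at a corner nor on the imaginary axis. Since Example~\ref{ex_crossratiooverpi} shows that $\{-i,i\}$ is \emph{not} always optimal --- the corners $\{-c,c\}$ win as $k\to0^+$, in which case the extremal ordered pair is not unique but forms the $\sigma$-orbit $\{(-c,c),(c,-c)\}$ --- the uniqueness shortcut cannot be invoked unconditionally, and one has to argue directly that the only local maxima of $\log\bigl(1+|a,f(x),b,f(y)|\bigr)$ on $\partial D\times\partial D$ occur at the $\{-i,i\}$-type pair and at the corner $\sigma$-orbit. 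I would approach this by writing out the two critical-point equations, using $\sigma$-invariance to reduce to a one-parameter family (for instance ${\rm Re}\,a=-{\rm Re}\,b$), and then proving a monotonicity or convexity property of the cross-ratio along that family so that its extrema are the endpoints $b=i$ (an interior point of $A_+$) and $b=\pm c$. The remaining comparisons among the finitely many surviving pairs, including the dichotomy $\lambda\lessgtr c^2$, are routine half-angle computations of the type already displayed in Example~\ref{ex_crossratiooverpi}.
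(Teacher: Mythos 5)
This statement is one of the paper's \emph{conjectures}: the authors offer only computational evidence for it and no proof, so there is no argument in the paper to compare yours against, and your text is likewise a programme rather than a proof. Your setup is sound --- $f$ from \eqref{functionF} does conjugate the reflection in the bisector to complex conjugation, so $f(x)=-i\lambda$, $f(y)=i\lambda$ with $\lambda\in(0,1)$, the corners are $\pm c$ with $c=\cot(\theta\slash4)$, and indeed $2u-1=c^2$ --- but the decisive step is exactly what you leave open: excluding a maximizing pair $(a,b)\in A_-\times A_+$ lying neither at $\{-i,i\}$ nor at the corners. The sliding argument of \eqref{ine_x1}--\eqref{ine_y1} is not available here, since for $\pi<\theta<2\pi$ the arcs $A_\pm$ lie \emph{inside} $\overline{\B}^2$ and, as Example \ref{ex_crossratiooverpi} shows, the pair $\{-i,i\}$ need not win, so Proposition \ref{prop_abinSH}(2) cannot absorb the problem; and your proposed substitute (``writing out the critical-point equations and proving a monotonicity or convexity property'') is an intention, not an argument. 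Moreover the reduction to the $\sigma$-symmetric family ${\rm Re}\,a=-{\rm Re}\,b$ is not justified: only the \emph{set} of maximizers is $\sigma$-invariant, individual critical points need not be $\sigma$-fixed.

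There is also a step that fails as stated. In the same-arc case you bound $\sup_{a,b\in A_+}|a,f(x),b,f(y)|$ by the supremum over the full circle $C_+=S^1((1-u)i,u)$, computed via $\delta=\rho$ for the disk $B^2((1-u)i,u)$. But as $\lambda\uparrow c^2$ the point $f(x)=-i\lambda$ tends to $-ic^2\in C_+$, so that full-circle supremum, which equals $e^{\rho_{B^2((1-u)i,u)}(f(x),f(y))}-1$, tends to infinity, while both candidate values $4\lambda\slash(1-\lambda)^2$ and $4c\lambda\slash(c^2+\lambda^2)$ stay bounded; the comparison you claim is ``then verified'' is therefore false in that regime. (The genuine supremum over $a,b\in A_+$ does remain bounded, because $-ic^2$ lies strictly inside the other disk $B^2((u-1)i,u)$ and hence is an interior point of $f(S_\theta)$ --- which is precisely why bounding by the whole circle is too lossy.) So both the mixed-arc case and the same-arc case still require genuinely new ideas, and the statement remains a conjecture after your proposal.
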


The results of this section about the supremum of the cross-ratio give us information about the values of the M\"obius metric $\delta_{S_\theta}$ defined in a sector domain.

\begin{corollary}\label{cor_deltaInS}
For all $x,y\in S_\theta$ with $0<\theta<2\pi$ such that $|x|=|y|$ and $\arg(x)\leq\arg(y)$,
\begin{align*}
\delta_{S_\theta}(x,y)\geq\log\left(1+\frac{\sin(\theta\slash2)\sin((\arg(y)-\arg(x))\slash2)}{\sin(\arg(x)\slash2)\sin((\theta-\arg(y))\slash2)}\right), 
\end{align*}
where the equality holds whenever $\theta\leq\pi$.
\end{corollary}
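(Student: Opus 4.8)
The plan is to reduce the statement to an explicit cross-ratio computation via the machinery already set up in this section. By Theorem~\ref{thm_deltaPointslessthanpi}, when $0<\theta<\pi$ and $x,y\in S_\theta$ satisfy $|x|=|y|=r$ and $\arg(x)\le\arg(y)$, the supremum defining $\delta_{S_\theta}(x,y)$ is attained at $a=r$ and $b=re^{\theta i}$, so in that case $\delta_{S_\theta}(x,y)=\log(1+|r,x,re^{\theta i},y|)$ exactly. Hence the first step is simply to compute $|r,x,re^{\theta i},y|$ for $x=re^{\alpha i}$, $y=re^{\beta i}$ with $0\le\alpha\le\beta\le\theta$ (writing $\alpha=\arg(x)$, $\beta=\arg(y)$), and check it equals $\sin(\theta/2)\sin((\beta-\alpha)/2)\big/\big(\sin(\alpha/2)\sin((\theta-\beta)/2)\big)$.

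The computation itself is routine: using $|re^{si}-re^{ti}|=2r|\sin((s-t)/2)|$, one gets
\begin{align*}
|r,x,re^{\theta i},y|
=\frac{|r-re^{\theta i}|\,|re^{\alpha i}-re^{\beta i}|}{|re^{\alpha i}-r|\,|re^{\theta i}-re^{\beta i}|}
=\frac{2r\sin(\theta/2)\cdot 2r\sin((\beta-\alpha)/2)}{2r\sin(\alpha/2)\cdot 2r\sin((\theta-\beta)/2)},
\end{align*}
which simplifies to the claimed expression (all half-angle arguments lie in $[0,\pi/2]$, so the absolute values are harmless). Substituting back into $\log(1+\cdot)$ gives the equality in the case $\theta\le\pi$; the boundary case $\theta=\pi$ follows identically since Theorem~\ref{thm_deltaPointslessthanpi} and Proposition~\ref{prop_abinSH}(1) cover $\arg$ up to $\pi$, or alternatively from Theorem~\ref{thm_seitminvariant} together with the half-plane computation in Proposition~\ref{prop_abinSH}(1).

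For the inequality in the remaining range $\pi<\theta<2\pi$, the point is that $r$ and $re^{\theta i}$ still lie in $\partial S_\theta$, so $a=r$, $b=re^{\theta i}$ remain admissible competitors in the supremum \eqref{def_delta}; therefore $\delta_{S_\theta}(x,y)\ge\log(1+|r,x,re^{\theta i},y|)$, and the same algebraic identity as above yields the stated lower bound. Example~\ref{ex_crossratiooverpi} shows this choice is no longer optimal, which is exactly why only an inequality (not equality) is claimed for $\theta>\pi$.

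I do not expect a genuine obstacle here: the only care needed is bookkeeping of which half-angles lie in which range so that $|\sin(\cdot)|=\sin(\cdot)$, and confirming that the hypothesis $\arg(x)\le\arg(y)$ with common modulus is precisely the configuration to which Theorem~\ref{thm_deltaPointslessthanpi} applies (a general pair $x,y\in S_\theta$ can be rotated, but the statement is already phrased for this normalized configuration, so no extra reduction is needed). The mildly delicate point is the equality clause ``whenever $\theta\le\pi$'': one must invoke Theorem~\ref{thm_deltaPointslessthanpi} for $\theta<\pi$ and handle $\theta=\pi$ separately, but both are immediate from results already in the excerpt.
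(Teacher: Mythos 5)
Your proposal is correct and follows essentially the same route as the paper: invoke Theorem~\ref{thm_deltaPointslessthanpi} (together with Proposition~\ref{prop_abinSH}(1) for the half-plane case $\theta=\pi$) to get equality with $a=r$, $b=re^{\theta i}$ when $\theta\le\pi$, observe that these boundary points remain admissible competitors in the supremum for $\theta>\pi$, and then carry out the same chord-length computation $|re^{si}-re^{ti}|=2r\sin(|s-t|/2)$ to evaluate the cross-ratio. The only nitpick is that for $\theta>\pi$ the half-angle $\theta/2$ lies in $(\pi/2,\pi)$ rather than $[0,\pi/2]$, but the sine is still nonnegative there, so nothing changes.
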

\begin{proof}
Let $x=re^{ui}$ and $y=re^{vi}$ with $0<u\leq v<\theta$. By Theorem \ref{thm_deltaPointslessthanpi} and Proposition \ref{prop_abinSH}(1), the supremum $\sup_{a,b\in\partial S_\theta}|a,x,y,b|$ is now found by choosing $a=r$ and $b=re^{\theta i}$ if $\theta\leq\pi$, and these choices of $a,b$ give a lower limit for the supremum if $\theta>\pi$. The result follows now directly from the definition of $\delta_{S_\theta}(x,y)$.
\end{proof}
\section{M\"obius metric and hyperbolic metric in open sector}\label{sct5}

In this section, we will study the connection between the M\"obius metric and the hyperbolic metric in an open sector $S_\theta$ with an angle $0<\theta<2\pi$. The main result of this section is Corollary \ref{cor_rhodeltainS}, which will be used to prove Theorem \ref{thm_finalfordeltaS}. However, in order to derive these results, we need to define the following quotient.

For all $0<k<1$ and $0<\theta<2\pi$, define
\begin{align}\label{def_quoQ}
Q(k,\theta)
\equiv
\left. \log\left(1+\frac{\sin(\theta\slash2)\sin(k\theta\slash2)}{\sin^2((1-k)\theta\slash4)}\right) 
\middle/ \log\left(1+\frac{\sin(k\pi\slash2)}{\sin^2((1-k)\pi\slash4)}\right)
\right.
.
\end{align}

The quotient above is very much needed here because it equals to the value of the quotient between the M\"obius metric and the hyperbolic metric in certain cases, as will be shown in the next lemma.

\begin{lemma}\label{lem_Qktheta}
For all $x,y\in S_\theta$ such that $x=re^{(1-k)\theta i\slash2}$ and $y=re^{(1+k)\theta i\slash2}$ with $r>0$ and $0<k<1$,
\begin{align*}
\frac{\delta_{S_\theta}(x,y)}{\rho_{S_\theta}(x,y)}=Q(k,\theta),\quad\text{if}\quad0<\theta<\pi;\quad\text{and}\quad
\frac{\delta_{S_\theta}(x,y)}{\rho_{S_\theta}(x,y)}\geq Q(k,\theta),\quad\text{if}\quad\pi<\theta<2\pi.
\end{align*}
\end{lemma}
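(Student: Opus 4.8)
The plan is to compute both $\delta_{S_\theta}(x,y)$ and $\rho_{S_\theta}(x,y)$ explicitly for the special configuration $x=re^{(1-k)\theta i/2}$, $y=re^{(1+k)\theta i/2}$ and then simply take the ratio, observing that by scale invariance (both metrics are invariant under $z\mapsto \lambda z$, $\lambda>0$) we may assume $r=1$. Note that $\arg(x)=(1-k)\theta/2$, $\arg(y)=(1+k)\theta/2$, $|x|=|y|=r$, and $\arg(x)\le\arg(y)$, so the hypotheses of Corollary~\ref{cor_deltaInS} are met.

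First I would handle the numerator. Plug $u=\arg(x)=(1-k)\theta/2$ and $v=\arg(y)=(1+k)\theta/2$ into Corollary~\ref{cor_deltaInS}. Then $v-u=k\theta$, $\theta-v=(1-k)\theta/2$, so $(v-u)/2=k\theta/2$, $u/2=(1-k)\theta/4$, and $(\theta-v)/2=(1-k)\theta/4$; hence
\begin{align*}
\delta_{S_\theta}(x,y)=\log\left(1+\frac{\sin(\theta/2)\sin(k\theta/2)}{\sin^2((1-k)\theta/4)}\right)\quad(\theta\le\pi),
\end{align*}
with $\ge$ in place of $=$ when $\theta>\pi$, exactly the numerator of $Q(k,\theta)$ in \eqref{def_quoQ}.

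Next I would handle the denominator $\rho_{S_\theta}(x,y)$. Since $\rho_{S_\theta}(x,y)=\rho_{\uhp^2}(x^{\pi/\theta},y^{\pi/\theta})$ and $x^{\pi/\theta}=e^{(1-k)\pi i/2}$, $y^{\pi/\theta}=e^{(1+k)\pi i/2}$, these are precisely the images of the configuration for the sector $S_\pi=\uhp^2$ with the same parameter $k$ (and $r=1$). But for $S_\pi$, by Theorem~\ref{thm_seitminvariant} we have $\delta_{\uhp^2}=\rho_{\uhp^2}$, and the points $e^{(1-k)\pi i/2}$, $e^{(1+k)\pi i/2}$ lie on $S^1(0,1)$ with equal moduli, so Corollary~\ref{cor_deltaInS} applied with $\theta=\pi$ gives $\rho_{S_\pi}=\delta_{S_\pi}=\log\bigl(1+\sin(k\pi/2)/\sin^2((1-k)\pi/4)\bigr)$, which is the denominator of $Q(k,\theta)$. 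Forming the quotient of these two expressions yields $\delta_{S_\theta}(x,y)/\rho_{S_\theta}(x,y)=Q(k,\theta)$ when $\theta<\pi$ and $\ge Q(k,\theta)$ when $\theta>\pi$.

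The only mild subtlety — and the main thing to get right rather than a genuine obstacle — is the denominator step: one must recognize that evaluating $\rho_{S_\theta}$ via the conformal map $z\mapsto z^{\pi/\theta}$ transports the problem to $\uhp^2$, where the hyperbolic distance between two unit-modulus points can itself be read off from Corollary~\ref{cor_deltaInS} with $\theta=\pi$ using $\rho_{\uhp^2}=\delta_{\uhp^2}$. Alternatively one can compute $\rho_{\uhp^2}(e^{(1-k)\pi i/2},e^{(1+k)\pi i/2})$ directly from $\tnh(\rho_{\uhp^2}(a,b)/2)=|(a-b)/(a-\bar b)|$, which gives the same closed form after the identities $|1-e^{(1-k)\pi i/2}|^2=2\sin^2((1-k)\pi/4)\cdot 2=4\sin^2((1-k)\pi/4)$ (and similar), confirming the formula. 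I would present the conformal-map route as the clean argument. Everything else is the routine trigonometric simplification already illustrated in Example~\ref{ex_crossratiooverpi} and Corollary~\ref{cor_deltaInS}.
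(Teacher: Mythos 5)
Your proposal is correct and follows essentially the same route as the paper: the numerator comes from Corollary \ref{cor_deltaInS} (equality for $\theta\le\pi$, lower bound for $\theta>\pi$), and the denominator from transporting the points to $\uhp^2$ via $z\mapsto z^{\pi/\theta}$, where the paper computes $\rho_{\uhp^2}$ directly with the identity $1-\sin(k\pi/2)=2\sin^2((1-k)\pi/4)$. Your shortcut of reading the denominator off Corollary \ref{cor_deltaInS} with $\theta=\pi$ together with $\delta_{\uhp^2}=\rho_{\uhp^2}$ is a valid, slightly slicker packaging of the same computation.
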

\begin{proof}
Recall the trigonometric identities $\sin(u)=\cos(\pi\slash2-u)$ and $\cos(2v)=1-2\sin^2(v)$. It follows from these that
\begin{align*}
1-\sin(k\pi\slash2)
=1-\cos((1-k)\pi\slash2)
=2\sin^2((1-k)\pi\slash4).
\end{align*}
By using this formula, we will have 
\begin{align*}
&\rho_{S_\theta}(x,y)
=\rho_{S_\theta}(re^{(1-k)\theta i\slash2},re^{(1+k)\theta i\slash2}) =\rho_{\uhp^2}(r^{\pi\slash\theta}e^{(1-k)\pi i\slash2},r^{\pi\slash\theta}e^{(1+k)\pi i\slash2})\\
&=\log\left(\frac{|e^{(1-k)\pi i\slash2}-e^{-(1+k)\pi i\slash2}|+|e^{(1-k)\pi i\slash2}-e^{(1+k)\pi i\slash2}|}{|e^{(1-k)\pi i\slash2}-e^{-(1+k)\pi i\slash2}|-|e^{(1-k)\pi i\slash2}-e^{(1+k)\pi i\slash2}|}\right)
=\log\left(\frac{1+\sin(k\pi\slash2)}{1-\sin(k\pi\slash2)}\right)\\
&=\log\left(1+\frac{2\sin(k\pi\slash2)}{1-\sin(k\pi\slash2)}\right)
=\log\left(1+\frac{\sin(k\pi\slash2)}{\sin^2((1-k)\pi\slash4)}\right).
\end{align*}
Combining the expression above and Corollary \ref{cor_deltaInS}, our result follows.
\end{proof}

While the result of Lemma \ref{lem_Qktheta} only holds for distinct points $x,y\in S_\theta$ that are symmetric with respect to the angle bisector of the sector and fulfill $|x|=|y|$, Corollary \ref{cor_Qwlg} shows us why studying the quotient $Q(k,\theta)$ is useful also outside these restrictions.

\begin{lemma}\label{lem_g}\cite[Lemma 4.2, pp. 7-8]{sqm}
For given two distinct points $x\,, y \in \uhp^2$, there exists a M\"obius transformation
$g:\uhp^2\to\uhp^2$ such that $|g(x)|=|g(y)| =1$ and ${\rm Im}(g(x)) = {\rm Im}(g(y))\,.$
\begin{enumerate}
    \item If ${\rm Im}(x) = {\rm Im}(y)\,,$ then $g(z)=(z-a)/r$
    where $a=  {\rm Re}((x+y)/2)$ and $r =|x-a|\,.$
    \item If ${\rm Re}(x) = {\rm Re}(y) =a\,$ and $r=\sqrt{{\rm Im}(x)  {\rm Im}(y)}$, then $g$ is the M\"obius transformation fulfilling $g(a-r)=0$, $g(a)=1$ and $g(a+r) =\infty$.
    \item In the remaining case, the angle $\alpha=\measuredangle(L(x,y),\R)$ belongs to $(0,\pi\slash2)$. Let $S^1(c_1,r_1)$ and $S^1(c_2,r_2)$ be two circles centered at the real axis and orthogonal to each other, such that $x,y\in S^1(c_1,r_1)$ and $c_2=L(x,y)\cap\R$. Then $g$ is determined by $g(B^2(c_1,r_1)\cap\uhp^2)=\B^2\cap\uhp^2\,,$  $g(c_1-r_1)=-1$, $g(c_1+r_1)=1$ and $g(S^1(c_2,r_2)\cap\uhp^2)=\{yi\text{ }|\text{ }y>0\}$.
\end{enumerate}
\end{lemma}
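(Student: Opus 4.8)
The plan is to construct $g$ explicitly in each of the three cases and then check the two required identities $|g(x)|=|g(y)|=1$ and ${\rm Im}(g(x))={\rm Im}(g(y))$ by direct computation. The geometric principle common to all three constructions is simple: two points $p,q\in S^1(0,1)\cap\uhp^2$ satisfy $|p|=|q|=1$ and ${\rm Im}(p)={\rm Im}(q)$ precisely when they are symmetric in the imaginary axis, that is, when they lie on the geodesic $\gamma_0=S^1(0,1)\cap\uhp^2$ at equal hyperbolic distance from $i$ on opposite sides. Hence it suffices to build a conformal automorphism $g$ of $\uhp^2$ that carries the hyperbolic geodesic through $x,y$ onto $\gamma_0$ and the hyperbolic midpoint of $[x,y]$ onto $i$.

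In case $(1)$, where ${\rm Im}(x)={\rm Im}(y)$, the geodesic through $x,y$ is the semicircle $S^1(a,r)\cap\uhp^2$ with $a={\rm Re}((x+y)/2)$ and $r=|x-a|=|y-a|$, and the similarity $g(z)=(z-a)/r$ is a hyperbolic isometry mapping it onto $\gamma_0$; then $|g(x)|=|x-a|/r=1=|g(y)|$ and ${\rm Im}(g(x))={\rm Im}(x)/r={\rm Im}(y)/r={\rm Im}(g(y))$. In case $(2)$, where ${\rm Re}(x)={\rm Re}(y)=a$, the geodesic is the vertical ray from $a$ to $\infty$; taking the real M\"obius map $g$ with $g(a-r)=0$, $g(a)=1$, $g(a+r)=\infty$ and $r=\sqrt{{\rm Im}(x){\rm Im}(y)}$, one computes for $x=a+it_1$ that $g(x)=(r+it_1)/(r-it_1)$, so $|g(x)|=1$ and ${\rm Im}(g(x))=2rt_1/(r^2+t_1^2)$; the relation $r^2=t_1t_2$ is exactly what forces ${\rm Im}(g(x))={\rm Im}(g(y))$.

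For the remaining case, where $\alpha=\measuredangle(L(x,y),\R)\in(0,\pi/2)$, let $S^1(c_1,r_1)$ with $c_1\in\R$ be the geodesic through $x,y$ and put $c_2=L(x,y)\cap\R$. Since the segment $[x,y]$ lies in $\uhp^2$ while $c_2\in\R$, the point $c_2$ lies on $L(x,y)$ outside the disk $\overline{B}^2(c_1,r_1)$, so its power $|c_1-c_2|^2-r_1^2$ with respect to $S^1(c_1,r_1)$ is positive; set $r_2=\sqrt{|c_1-c_2|^2-r_1^2}$, the radius of the circle through $c_2$ orthogonal to $S^1(c_1,r_1)$. As $x$ and $y$ lie on a common ray from $c_2$ with $|c_2-x|\,|c_2-y|=r_2^2$, inversion in $S^1(c_2,r_2)$ interchanges them; being orthogonal to $S^1(c_1,r_1)$, this inversion is the hyperbolic reflection of $\uhp^2$ whose fixed geodesic is $S^1(c_2,r_2)\cap\uhp^2$, so this geodesic is the hyperbolic perpendicular bisector of $[x,y]$ and therefore contains the hyperbolic midpoint of $[x,y]$, which also lies on $S^1(c_1,r_1)\cap\uhp^2$. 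Finally, the M\"obius transformation $g$ with $g(c_1-r_1)=-1$, $g(c_1+r_1)=1$ and $\{g(c_2-r_2),g(c_2+r_2)\}=\{0,\infty\}$ is an automorphism of $\uhp^2$ taking $S^1(c_1,r_1)\cap\uhp^2$ onto $\gamma_0$ and $S^1(c_2,r_2)\cap\uhp^2$ onto the positive imaginary axis, hence the midpoint of $[x,y]$ onto $\gamma_0\cap i\R=\{i\}$; so $g(x)$ and $g(y)$ lie on $\gamma_0$ symmetrically about $i$, which gives the two identities.

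I expect the main obstacle to be the bookkeeping in case $(3)$: one must verify that $c_2$ really lies strictly outside $S^1(c_1,r_1)$ (so that $r_2>0$ and the orthogonal circle exists), that $x,y$ lie on the same ray from $c_2$ so that the inversion swaps rather than fixes them, and that the normalising conditions on $g$ are mutually consistent --- this follows from the harmonic relation $|c_1-r_1,c_2-r_2,c_1+r_1,c_2+r_2|=-1$ encoding the orthogonality of the two circles --- and single out a self-map of $\uhp^2$ with the orientation that sends the perpendicular bisector to the \emph{positive} imaginary axis rather than to its reflection. Once the target pair on $\gamma_0$ has been identified, cases $(1)$ and $(2)$ are routine.
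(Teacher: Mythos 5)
The paper itself gives no proof of this lemma --- it is quoted from \cite[Lemma 4.2]{sqm} --- so there is no in-paper argument to measure yours against; judged on its own, your proof is correct. Cases (1) and (2) are settled by the direct computations you give: $|g(x)|=|x-a|/r=1=|g(y)|$ with equal imaginary parts in (1), and in (2) $g(a+it)=(r+it)/(r-it)$ has modulus $1$, while the identity ${\rm Im}(g(a+it))=2rt/(r^2+t^2)$ together with $r^2={\rm Im}(x)\,{\rm Im}(y)$ forces the equality of imaginary parts. In case (3) your reduction is sound: since $x,y$ are the only points of $L(x,y)$ on $S^1(c_1,r_1)$ and the open segment between them stays in $\uhp^2$, the real point $c_2$ is not between them, so the power $|c_1-c_2|^2-r_1^2=|c_2-x|\,|c_2-y|$ is positive; this yields $r_2$, the orthogonality, and the fact that inversion in $S^1(c_2,r_2)$ (a hyperbolic reflection, as $c_2\in\R$) swaps $x$ and $y$, so that after applying $g$ the reflection in the imaginary axis, $z\mapsto-\overline{z}$, swaps $g(x),g(y)\in S^1(0,1)\cap\uhp^2$, which gives both required identities. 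The only step you leave compressed --- and correctly flag --- is the consistency and orientation of the normalisation of $g$: because the two circles are orthogonal and centred on $\R$, their real endpoint pairs $\{c_1\pm r_1\}$ and $\{c_2\pm r_2\}$ separate each other, so the real M\"obius map sending $c_1-r_1\mapsto-1$, $c_1+r_1\mapsto1$ and the endpoint of $S^1(c_2,r_2)$ lying inside $(c_1-r_1,c_1+r_1)$ to $0$ preserves the order of these three boundary points, hence maps $\uhp^2$ onto $\uhp^2$ and the half-disk onto $\B^2\cap\uhp^2$, and the harmonic (orthogonality) relation then sends the remaining endpoint to $\infty$, so the image of $S^1(c_2,r_2)\cap\uhp^2$ is the positive imaginary axis. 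With that paragraph written out, your argument is a complete proof of the quoted lemma.
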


\begin{lemma}\label{lem_confmappingtoH}\cite[Lemma 4.5, p. 8]{sqm}
For all distinct points $x,y\in S_\theta$ with $0<\theta<2\pi$, there is a conformal mapping $f:S_\theta\to S_\theta$ such that $f(x)=e^{(1-k)\theta i\slash2}$ and $f(y)=e^{(1+k)\theta i\slash2}$ for some $k\in(0,1)$. \end{lemma}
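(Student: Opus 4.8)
The plan is to reduce the general case to the symmetric normalized configuration already handled implicitly by Corollary \ref{cor_deltaInS} and Lemma \ref{lem_Qktheta}, via the conformal self-map of $S_\theta$ whose existence is the content of Lemma \ref{lem_confmappingtoH}. The key observation is that the sector map $z\mapsto z^{\pi/\theta}$ conjugates $S_\theta$ to $\uhp^2$ and carries the hyperbolic metric $\rho_{S_\theta}$ exactly onto $\rho_{\uhp^2}$, so the problem of moving two arbitrary points of $S_\theta$ to the ``symmetric'' position $\{e^{(1-k)\theta i/2},e^{(1+k)\theta i/2}\}$ becomes, after this conjugation, the problem of moving two arbitrary points of $\uhp^2$ to a pair of the form $\{e^{(1-k)\pi i/2},e^{(1+k)\pi i/2}\}$, i.e.\ a pair lying on the unit semicircle and symmetric about the imaginary axis. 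That is precisely what the M\"obius transformation $g$ of Lemma \ref{lem_g} accomplishes in each of its three cases: it sends $x,y$ to points of equal modulus and equal imaginary part on $S^1\cap\uhp^2$, which is exactly the symmetric pair.

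First I would set up the composite map: given distinct $x,y\in S_\theta$, let $\phi(z)=z^{\pi/\theta}$ send $S_\theta$ onto $\uhp^2$, apply the M\"obius transformation $g$ from Lemma \ref{lem_g} to the pair $\phi(x),\phi(y)\in\uhp^2$ so that $|g(\phi(x))|=|g(\phi(y))|=1$ and ${\rm Im}(g(\phi(x)))={\rm Im}(g(\phi(y)))$, and then pull back by $\phi^{-1}(w)=w^{\theta/\pi}$. The composition $f=\phi^{-1}\circ g\circ\phi$ is a conformal self-map of $S_\theta$; one must check it is well-defined and maps $S_\theta$ onto itself, which follows because $g$ is a M\"obius self-map of $\uhp^2$ and the branches of the power maps are fixed by requiring arguments to lie in $(0,\pi)$ and $(0,\theta)$ respectively. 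The image pair $g(\phi(x)),g(\phi(y))$ lies on the unit semicircle symmetrically about $i$, hence equals $\{e^{(1-k)\pi i/2},e^{(1+k)\pi i/2}\}$ for a unique $k\in(0,1)$, and pulling back gives $f(x),f(y)\in\{e^{(1-k)\theta i/2},e^{(1+k)\theta i/2}\}$ after possibly swapping $x$ and $y$ so that the arguments are in the correct order.

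The main obstacle is not conceptual but bookkeeping: one has to verify that in all three cases of Lemma \ref{lem_g} the normalized points really land on the unit circle and are symmetric with respect to $i$, and that the composite power-and-M\"obius map is a genuine conformal homeomorphism of $S_\theta$ (injectivity, surjectivity, branch consistency). Case (3), where an auxiliary pair of orthogonal circles is introduced, needs the extra remark that $g$ maps the circle through $x,y$ to $\B^2\cap\uhp^2$ and the orthogonal circle to the positive imaginary axis, so the two points end up at $e^{i\alpha'}$ and $e^{i(\pi-\alpha')}$ for the common argument $\alpha'$; symmetry about $i$ then holds. Since this is essentially the argument of \cite[Lemma 4.5]{sqm} transported to the present setting, I would cite that source and present the construction explicitly only to the extent needed to fix the value of $k$. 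Once $f$ is in hand the conclusion is immediate: $f$ is the required conformal mapping.
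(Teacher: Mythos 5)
Your construction $f=\phi^{-1}\circ g\circ\phi$ with $\phi(z)=z^{\pi/\theta}$ and $g$ the normalizing M\"obius self-map of $\uhp^2$ from Lemma \ref{lem_g} is exactly the paper's proof, which defines $f=h^{-1}\circ g\circ h$ with $h(z)=z^{\pi/\theta}$ and reads off $k$ from $g(h(x))=e^{(1-k)\pi i/2}$, $g(h(y))=e^{(1+k)\pi i/2}$. The proposal is correct and takes essentially the same approach; the extra remarks about branches and possibly relabeling $x,y$ are harmless presentation details.
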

\begin{proof}
Consider a conformal map $h:S_\theta\to\uhp^2$, $h(z)=z^{\pi\slash\theta}$. Fix $g:\uhp^2\to\uhp^2$ as the M\"obius invariant map of Lemma \ref{lem_g} for the points $h(x),h(y)$. Define a conformal mapping $f=h^{-1}\circ g\circ h$. Since $|g(h(x))|=|g(h(y))|=1$ and ${\rm Im}(g(h(x)))={\rm Im}(g(h(y)))$, we can write $g(h(x))=e^{(1-k)\pi i\slash2}$ and $g(h(y))=e^{(1+k)\pi i\slash2}$ for some $0<k<1$. By using this $k$, we will have the points $f(x)=e^{(1-k)\theta i\slash2}$ and $f(y)=e^{(1+k)\theta i\slash2}$. 
\end{proof}

\begin{corollary}\label{cor_Qwlg}
For all $0<\theta<2\pi$ and distinct $x,y\in S_\theta$,
\begin{align*}
\inf_{0<k<1}Q(k,\theta)&\leq
\frac{\delta_{S_\theta}(x,y)}{\rho_{S_\theta}(x,y)}\leq
\sup_{0<k<1}Q(k,\theta),\quad\text{if}\quad0<\theta\leq\pi\\
\inf_{0<k<1}Q(k,\theta)&\leq
\frac{\delta_{S_\theta}(x,y)}{\rho_{S_\theta}(x,y)},\quad\text{if}\quad\pi<\theta<2\pi
\end{align*}
where $f(x)=e^{(1-k)\theta i\slash2}$ and $f(y)=e^{(1+k)\theta i\slash2}$.
\end{corollary}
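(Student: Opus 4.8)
The natural plan is to reduce the general pair $(x,y)$ to the bisector-symmetric configuration treated in Lemma~\ref{lem_Qktheta} by means of the conformal map furnished by Lemma~\ref{lem_confmappingtoH}. The case $\theta=\pi$ is trivial: then $S_\pi=\uhp^2$, so $\delta_{S_\pi}=\rho_{S_\pi}$ by Theorem~\ref{thm_seitminvariant}, while $Q(k,\pi)=1$ for all $k\in(0,1)$ straight from \eqref{def_quoQ}, so every quantity in the two displays equals $1$. So suppose $\theta\neq\pi$. For distinct $x,y\in S_\theta$, Lemma~\ref{lem_confmappingtoH} gives a conformal self-map $f$ of $S_\theta$ and a $k\in(0,1)$ with $f(x)=e^{(1-k)\theta i/2}=:x_k$ and $f(y)=e^{(1+k)\theta i/2}=:y_k$. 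Since a conformal automorphism of $S_\theta$ is a hyperbolic isometry, $\rho_{S_\theta}(x,y)=\rho_{S_\theta}(x_k,y_k)$, and Lemma~\ref{lem_Qktheta} evaluates the quotient for the transported pair: $\delta_{S_\theta}(x_k,y_k)=Q(k,\theta)\,\rho_{S_\theta}(x_k,y_k)$ for $0<\theta<\pi$, and $\delta_{S_\theta}(x_k,y_k)\geq Q(k,\theta)\,\rho_{S_\theta}(x_k,y_k)$ for $\pi<\theta<2\pi$.

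It therefore remains to see that $\delta_{S_\theta}(x,y)=\delta_{S_\theta}(x_k,y_k)$, and this equality is the main obstacle. Granting it, dividing by $\rho_{S_\theta}(x,y)=\rho_{S_\theta}(x_k,y_k)$ gives $\delta_{S_\theta}(x,y)/\rho_{S_\theta}(x,y)=Q(k,\theta)$ (resp.\ $\geq Q(k,\theta)$), and then the displayed chains of inequalities follow at once from the trivial bound $\inf_{0<j<1}Q(j,\theta)\leq Q(k,\theta)\leq\sup_{0<j<1}Q(j,\theta)$. The difficulty in the invariance step is that $f$ has the form $h^{-1}\circ g\circ h$ with $h(z)=z^{\pi/\theta}$ and $g$ a M\"obius self-map of $\uhp^2$, so for $\theta\neq\pi$ the map $f$ is conformal but not M\"obius, and the M\"obius invariance of Theorem~\ref{thm_seitminvariant} does not apply to $f$ itself. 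What one has available is only that $f$ maps $\partial S_\theta$ bijectively onto $\partial S_\theta$, so the two supremum problems in \eqref{def_delta} defining $\delta_{S_\theta}(x,y)$ and $\delta_{S_\theta}(x_k,y_k)$ run over the same index set of boundary pairs; converting this into an equality of the two supremum \emph{values} is where the real weight of the proof lies, and it is here that the explicit identification of the extremal boundary data in Theorem~\ref{thm_deltaPointslessthanpi} (for $\theta<\pi$) and in Corollary~\ref{cor_deltaInS} has to be brought in.

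A clean way to organise the write-up is thus: (i) dispose of $\theta=\pi$; (ii) produce $f$ and $k$ from Lemma~\ref{lem_confmappingtoH} and record $\rho_{S_\theta}(x,y)=\rho_{S_\theta}(x_k,y_k)$; (iii) prove the M\"obius-metric invariance $\delta_{S_\theta}(x,y)=\delta_{S_\theta}(x_k,y_k)$, the hard step; (iv) apply Lemma~\ref{lem_Qktheta} together with the trivial $\inf$/$\sup$ comparison to read off both cases. The subsequent explicit evaluation of $\inf_{0<j<1}Q(j,\theta)$ and $\sup_{0<j<1}Q(j,\theta)$ — which is what converts these bounds into Theorem~\ref{thm_finalfordeltaS} — is a separate matter and does not enter the proof of this corollary.
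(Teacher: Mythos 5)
Your reduction stands or falls on step (iii), the claimed equality $\delta_{S_\theta}(x,y)=\delta_{S_\theta}(f(x),f(y))$, and this is exactly where the proposal breaks: you never prove it, and it is false in general. For $\theta\neq\pi$ the map $f=h^{-1}\circ g\circ h$ of Lemma \ref{lem_confmappingtoH} is conformal but not a M\"obius transformation of $\overline{\R}^2$, and $\delta_G$ is only M\"obius invariant, not conformally invariant; the paper's own proof even states explicitly that $f$ ``does not necessarily preserve the distance $\delta_{S_\theta}(x,y)$''. A concrete obstruction for $\theta<\pi$: take $x\to y$ near an interior point of one edge of $\partial S_\theta$ (away from the vertex), where the sector is locally a half-plane, so $\delta_{S_\theta}(x,y)/\rho_{S_\theta}(x,y)\to 1$; the image pair $(f(x),f(y))$ is a bisector-symmetric pair whose parameter $k$ tends to $0^+$, and there Lemma \ref{lem_Qktheta} gives the exact ratio $Q(k,\theta)\to\theta\sin(\theta/2)/(2\pi\sin^2(\theta/4))>1$. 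Since $\rho_{S_\theta}$ \emph{is} preserved by $f$, the two $\delta$-values cannot coincide, so the invariance you call ``the hard step'' is not merely hard but unattainable, and appealing to Theorem \ref{thm_deltaPointslessthanpi} or Corollary \ref{cor_deltaInS} cannot rescue it (those results identify extremal boundary points for special configurations; they do not make the supremum value invariant under $f$). Note also that your route would yield the exact identity $\delta_{S_\theta}(x,y)/\rho_{S_\theta}(x,y)=Q(k,\theta)$ for \emph{all} pairs when $\theta<\pi$, which is strictly stronger than the corollary and contradicted by the same example -- a sign the approach proves too much.

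The paper avoids this trap: it uses only the conformal invariance of $\rho_{S_\theta}$ under $f$ together with $\delta_{\uhp^2}=\rho_{\uhp^2}$ (Theorem \ref{thm_seitminvariant}) to sandwich the quotient $\delta_{S_\theta}(x,y)/\rho_{S_\theta}(x,y)$ between the infimum and supremum of the same quotient taken over the symmetric configurations $f(x)=e^{(1-k)\theta i/2}$, $f(y)=e^{(1+k)\theta i/2}$, and only then invokes Lemma \ref{lem_Qktheta}, which gives equality with $Q(k,\theta)$ at those configurations for $\theta<\pi$ and the one-sided bound for $\theta>\pi$ (hence the corollary's one-sided conclusion in that range). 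Your steps (i), (ii) and (iv) agree with this, but without replacing (iii) by an argument of this comparison type -- one that never asserts $\delta_{S_\theta}(x,y)=\delta_{S_\theta}(f(x),f(y))$ -- the proposal does not establish the statement.
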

\begin{proof}
Let the mappings $f,g,h$ be as in Lemma \ref{lem_g} and the proof of Lemma \ref{lem_confmappingtoH}. Note that, even though the mapping $f$ of does not necessarily preserve the distance $\delta_{S_\theta}(x,y)$, by Theorem \ref{thm_seitminvariant} and the conformal invariance of the hyperbolic metric,
\begin{align*}
&\delta_{\uhp^2}(h(x),h(y))=\delta_{\uhp^2}(g(h(x)),g(h(y)))=\rho_{\uhp^2}(g(h(x)),g(h(y)))=\rho_{\uhp^2}(h(x),h(y)),\\
&\rho_{S_\theta}(x,y)=\rho_{S_\theta}(f(x),f(y)),
\end{align*}
for all points $x,y\in S_\theta$. It follows from this that 
\begin{align*}
\inf_{x,y\in S_\theta}\frac{\delta_{S_\theta}(f(x),f(y))}{\rho_{S_\theta}(f(x),f(y))}\leq
\frac{\delta_{S_\theta}(x,y)}{\rho_{S_\theta}(x,y)}\leq
\sup_{x,y\in S_\theta}\frac{\delta_{S_\theta}(f(x),f(y))}{\rho_{S_\theta}(f(x),f(y))},
\end{align*}
which leads to the result of our corollary by Lemmas \ref{lem_Qktheta} and \ref{lem_confmappingtoH}. 
\end{proof}

Before studying the values of the quotient $Q(k,\theta)$, consider yet the following proposition.

\begin{proposition}\label{prop_triginc}
$(1)$ For all constants $u,v\in(0,\pi]$, the quotients $\sin(uk)\slash\sin(vk)$ and $\sin((1-k)v)\slash\sin((1-k)u)$ are increasing with respect to $0<k<1$ if and only if $u\leq v$, and decreasing if $u>v$ instead.\newline
$(2)$ The quotient $\sin(t\slash2)\slash t$ is decreasing with respect to $0<t<2\pi$.\newline
$(3)$ The quotient $t\sin(t)\slash\sin^2(t\slash2)$ is decreasing with respect to $0<t<2\pi$.\newline
$(4)$ The quotient $\log(1+\mu q)\slash\log(1+q)$ is increasing with respect to $q>0$ if $0<\mu\leq1$, and decreasing if $\mu>1$.
\end{proposition}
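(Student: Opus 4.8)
The statement is a collection of four elementary monotonicity claims, so the plan is to treat each one by reducing it to the sign of a derivative or to a known monotonicity fact, rather than by ad hoc estimates.

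For part $(1)$, I would set $\varphi(k)=\sin(uk)/\sin(vk)$ and compute
\[
\varphi'(k)=\frac{u\cos(uk)\sin(vk)-v\sin(uk)\cos(vk)}{\sin^2(vk)}.
\]
The sign of the numerator is the sign of $g(k):=u\cot(uk)-v\cot(vk)$ after dividing by $\sin(uk)\sin(vk)>0$ (note $uk,vk\in(0,\pi)$ for $k\in(0,1)$, so both sines are positive). Here I would invoke the fact that $t\mapsto t\cot t$ is strictly decreasing on $(0,\pi)$ — this follows since its derivative is $\cot t - t/\sin^2 t$, whose numerator $\sin t\cos t - t<0$ on $(0,\pi)$. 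Hence $u\cot(uk)=\psi(uk)$ with $\psi(t)=t\cot t/k$... more cleanly: write $u\cot(uk)=\frac{1}{k}(uk)\cot(uk)$, so $g(k)=\frac{1}{k}\big(h(uk)-h(vk)\big)$ with $h(t)=t\cot t$ decreasing; thus $g(k)\ge 0 \iff h(uk)\ge h(vk)\iff uk\le vk\iff u\le v$, with equality only when $u=v$. This gives the "if and only if" and the decreasing case simultaneously. The second quotient $\sin((1-k)v)/\sin((1-k)u)$ is handled by substituting $m=1-k$ and applying the same computation with the roles of $u,v$ swapped and the chain-rule sign flip from $dk\mapsto -dm$, which exactly cancels to give monotonicity in $k$ of the same type.

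For parts $(2)$ and $(3)$ I would again differentiate. For $(2)$, with $p(t)=\sin(t/2)/t$, the derivative has numerator $\tfrac{t}{2}\cos(t/2)-\sin(t/2)$, which is $<0$ on $(0,2\pi)$ because $\tan(t/2)>t/2$ for $t/2\in(0,\pi)$ (strict on $(0,\pi)$; at $t/2\in(\pi/2,\pi)$ the left side is negative and the right side positive, so it is immediate there). For $(3)$, with $P(t)=t\sin t/\sin^2(t/2)=t\sin t/\big(\tfrac12(1-\cos t)\big)$, I would simplify using $\sin t = 2\sin(t/2)\cos(t/2)$ and $\sin^2(t/2)=\tfrac12(1-\cos t)$ to get $P(t)=2t\cos(t/2)/\sin(t/2)=2t\cot(t/2)=4\cdot\tfrac{t}{2}\cot(t/2)=4h(t/2)$ with $h(s)=s\cot s$ the function already shown to be strictly decreasing on $(0,\pi)$; since $t\mapsto t/2$ maps $(0,2\pi)$ onto $(0,\pi)$, $P$ is strictly decreasing. (So $(3)$ in fact reduces to the same lemma used in $(1)$, which is a nice economy.) For part $(4)$, fix $q$ and set $r(\mu)$ aside; instead fix $\mu$ and let $F(q)=\log(1+\mu q)/\log(1+q)$. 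Differentiating, the sign of $F'(q)$ is the sign of
\[
\frac{\mu}{1+\mu q}\log(1+q)-\frac{1}{1+q}\log(1+\mu q).
\]
I would rewrite this as $\frac{1}{1+q}\frac{1}{1+\mu q}\big[\mu(1+q)\log(1+q)-(1+\mu q)\log(1+\mu q)\big]$ and analyze $\Phi(q):=\mu(1+q)\log(1+q)-(1+\mu q)\log(1+\mu q)$. A clean route: compare against the function $x\mapsto (1+x)\log(1+x)$... actually the slickest argument is to substitute $s=\log(1+q)$, $t=\log(1+\mu q)$ and use concavity; alternatively note $F(q)=\int_0^1 \frac{\mu q\,/(1+t\mu q)}{\log(1+q)}\,dt$ type tricks. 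The most robust is simply: let $G(q)=\log(1+\mu q)$ and $H(q)=\log(1+q)$, both increasing with $G(0)=H(0)=0$; then $G/H$ is monotone according to whether $G'/H'=\frac{\mu(1+q)}{1+\mu q}$ is increasing or decreasing (this is the monotone l'Hôpital / ratio lemma for functions vanishing at $0$). Compute $\frac{d}{dq}\frac{\mu(1+q)}{1+\mu q}=\frac{\mu(1-\mu)}{(1+\mu q)^2}$, which is positive for $\mu<1$ and negative for $\mu>1$; hence $G'/H'$ is increasing when $0<\mu<1$, so $G/H$ is increasing, and decreasing when $\mu>1$. This is exactly the claim.

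\textbf{Main obstacle.} None of the four parts is deep; the only genuine care is in part $(1)$, where one must keep track of which variable is being differentiated and make sure the arguments $uk,vk$ (and $(1-k)u,(1-k)v$) stay in $(0,\pi)$ so that the sign analysis of $t\cot t$ applies on the whole interval — this uses $u,v\in(0,\pi]$ and $k\in(0,1)$ precisely. For part $(4)$ the cleanest presentation hinges on stating and applying the monotone ratio lemma (if $G,H$ are differentiable with $G(0)=H(0)=0$, $H,H'>0$, and $G'/H'$ is monotone, then $G/H$ is monotone the same way); I would either cite it or give the two-line proof via Cauchy's mean value theorem. Everything else is a routine derivative-sign computation, and I expect the writeup to fit in under a page.
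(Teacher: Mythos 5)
Your proposal is correct and follows essentially the same route as the paper: part (1) rests on the monotonicity of $t\mapsto t\cot t$ on $(0,\pi)$ (the paper's auxiliary function $f_2(u)=u\cos(uk)/\sin(uk)$ is exactly this, and the second quotient is likewise handled by the substitution $k\mapsto 1-k$), and part (4) is proved by the same monotone l'H\^opital rule applied to $\log(1+\mu q)$ and $\log(1+q)$ with $G'/H'=\mu(1+q)/(1+\mu q)$. The only differences are cosmetic: you check (2) by a direct derivative-sign argument and reduce (3) to $2t\cot(t/2)$ (a slightly more economical reuse of the lemma from (1)), whereas the paper proves (2) by the monotone l'H\^opital rule and (3) by direct differentiation.
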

\begin{proof}
(1) First, define a function $f_1:[0,\pi]\to\R$, $f_1(t)=\sin(t\slash2)-2t$, and note that by calculus $f_1(t)\leq0$ for all $0\leq t\leq\pi$. Now, consider the function $f_2:(0,\pi]\to\R$, $f_2(u)=u\cos(uk)\slash\sin(uk)$ with $0<k<1$. By differentiation and simple trigonometric identities,
\begin{align*}
f_2'(u)
=\frac{\sin(uk\slash2)-2uk}{2\sin^2(uk)}
=\frac{f_1(uk)}{2\sin^2(uk)}\leq0
\end{align*}
so $f_2$ is decreasing with respect to $0<u\leq\pi$. Finally, denote $f_3:(0,1)\to\R$, $f_3(k)=\sin(uk)\slash\sin(vk)$, where $u,v\in(0,\pi]$ are constants. By differentiation,
\begin{align*}
&f_3'(k)=\frac{u\cos(uk)\sin(vk)-v\sin(uk)\cos(vk)}{\sin^2(vk)}\geq0
\quad\Leftrightarrow\quad
\frac{u\cos(uk)}{\sin(uk)}\geq\frac{v\cos(vk)}{\sin(vk)}\\
\Leftrightarrow\quad
&f_2(u)\geq f_2(v)
\quad\Leftrightarrow\quad
u\leq v.
\end{align*}
This is enough to prove the result because $1\slash f_3(1-k)$ is increasing (or decreasing) with respect to $0<k<1$ whenever $f_3$ is.

(2) Define $f_4,f_5:(0,2\pi)\to\R$, $f_4(t)=\sin(t\slash2)$, $f_5(t)=t$. Since $f_4(0)=f_5(0)=0$ and $f_4'(t)\slash f_5'(t)=\cos(t\slash2)\slash2$ is decreasing with respect to $t$, by \cite[Thm B.2, p. 465]{hkvbook}, $f_4(t)\slash f_5(t)$ is decreasing, too.

(3) Denote $f_6:(0,2\pi)\to\R$, $f_6(t)=t\sin(t)\slash\sin^2(t\slash2)$.
By differentiation and some trigonometric identities,
\begin{align*}
f_6'(t)
=\frac{(\sin(t)+t\cos(t))(1-\cos(t))-t\sin^2(t)}{2\sin^4(t\slash2)}
=\frac{(\sin(t)-t)(1-\cos(t))}{2\sin^4(t\slash2)}\leq0,
\end{align*}
so $f_6$ is decreasing for $0<t<2\pi$.

(4) Define $f_7,f_8:(0,\infty)\to\R$, $f_7(q)=\log(1+\mu q)$, $f_8(q)=\log(1+q)$. Note that $f_7(0)=f_8(0)=0$ and $f_7'(q)\slash f_8'(q)=\mu(1+q)\slash(1+\mu q)$ is increasing with respect to $q>0$ if $0<\mu\leq1$ and decreasing if $\mu>1$. By \cite[Thm B.2, p. 465]{hkvbook}, the quotient $f_7(q)\slash f_8(q)$ is increasing (or decreasing) whenever $f_7'(q)\slash f_8'(q)$ is, so the result follows. 
\end{proof}

\begin{theorem}\label{thm_quoQlimits}
For all $0<k<1$ and $0<\theta<2\pi$, the quotient $Q(k,\theta)$ defined in \eqref{def_quoQ} fulfills 
\begin{align*}
&1\leq Q(k,\theta)\leq(\pi\sin(\theta\slash2)\slash\theta)^2,\quad\text{if }\theta<\pi,\\
&(\pi\sin(\theta\slash2)\slash\theta)^2\leq Q(k,\theta)\leq1,\quad\text{if }\theta>\pi.
\end{align*}
\end{theorem}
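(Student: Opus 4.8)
The plan is to analyse the quotient
\[
Q(k,\theta)=\frac{\log\!\bigl(1+q(k,\theta)\bigr)}{\log\!\bigl(1+q(k,\pi)\bigr)},\qquad
q(k,\theta)=\frac{\sin(\theta/2)\sin(k\theta/2)}{\sin^2((1-k)\theta/4)},
\]
by separating the ``$\log$'' dependence from the ``ratio of arguments'' dependence. First I would record the elementary fact that $Q(k,\theta)=1$ when $\theta=\pi$, and then treat $\theta<\pi$ and $\theta>\pi$ as mirror images of each other. The key observation is that
\[
\frac{q(k,\theta)}{q(k,\pi)}
=\frac{\sin(\theta/2)}{\sin(\pi/2)}\cdot\frac{\sin(k\theta/2)}{\sin(k\pi/2)}\cdot\frac{\sin^2((1-k)\pi/4)}{\sin^2((1-k)\theta/4)},
\]
so I would show this ratio $\mu=\mu(k,\theta)$ satisfies $\mu\le 1$ when $\theta<\pi$ and $\mu\ge 1$ when $\theta>\pi$; combined with Proposition \ref{prop_triginc}(4) (monotonicity of $\log(1+\mu q)/\log(1+q)$ in $q$) this already pins down whether $Q(k,\theta)\ge 1$ or $\le 1$, because $\log(1+q(k,\theta))/\log(1+q(k,\pi)) = [\log(1+\mu q(k,\pi))/\log(1+q(k,\pi))]$ lies between $\mu$ and $1$... but more directly, writing $q=q(k,\pi)$ and $\mu q=q(k,\theta)$, Proposition \ref{prop_triginc}(4) gives $Q(k,\theta)\ge 1$ iff $\mu\le 1$. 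So the bound on the side $Q\gtrless 1$ reduces entirely to the sign of $\mu-1$, which I expect to prove by applying Proposition \ref{prop_triginc}(1) to the two factors $\sin(k\theta/2)/\sin(k\pi/2)$ and $\sin((1-k)\pi/4)/\sin((1-k)\theta/4)$ (both monotone in $k$ with the right direction), and Proposition \ref{prop_triginc}(2) or a direct concavity/comparison argument to the factor $\sin(\theta/2)$ versus the denominators.

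For the other (nontrivial) inequality, say $Q(k,\theta)\le(\pi\sin(\theta/2)/\theta)^2$ when $\theta<\pi$, I would again use Proposition \ref{prop_triginc}(4): since $q(k,\theta)=\mu q(k,\pi)$ with $\mu\le 1$, we have
\[
Q(k,\theta)=\frac{\log(1+\mu q)}{\log(1+q)}\le \sup_{q>0}\frac{\log(1+\mu q)}{\log(1+q)}=\lim_{q\to 0^+}\frac{\log(1+\mu q)}{\log(1+q)}=\mu
\]
(the sup of an increasing function of $q$ on $(0,\infty)$ being its limit at $0$, provided one checks the limit as $q\to0$ along the relevant curve $k\mapsto q(k,\pi)$ — note $q(k,\pi)\to0$ as $k\to0^+$). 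Hence it suffices to bound $\mu(k,\theta)\le(\pi\sin(\theta/2)/\theta)^2$ for all $k\in(0,1)$. This in turn should follow from estimating each of the two $k$-dependent factors of $\mu$: by Proposition \ref{prop_triginc}(1) the factor $\sin(k\theta/2)/\sin(k\pi/2)$ is monotone in $k$ and its extreme value as $k\to0^+$ is $\theta/\pi$, and similarly $\sin^2((1-k)\pi/4)/\sin^2((1-k)\theta/4)$ tends to $(\pi/\theta)^2$ as $k\to0^+$ — wait, that overshoots; I will need the genuine monotone directions so that the product of the two factors, times $\sin(\theta/2)$, is maximised exactly in the limit $k\to0^+$, giving $\sin(\theta/2)\cdot(\theta/\pi)\cdot$(something). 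I expect the honest computation of $\lim_{k\to0^+}q(k,\theta)/q(k,\pi)$ to yield precisely $(\pi/\theta)\sin(\theta/2)\cdot$[?]; using $\sin(k\theta/2)\sim k\theta/2$ and $\sin^2((1-k)\theta/4)\to\sin^2(\theta/4)$ one gets $q(k,\theta)\sim \frac{k\theta\sin(\theta/2)}{2\sin^2(\theta/4)}=\frac{k\theta\sin(\theta/2)}{1-\cos(\theta/2)}$, so $\lim_{k\to0^+}q(k,\theta)/q(k,\pi)=\frac{\theta\sin(\theta/2)/(1-\cos(\theta/2))}{\pi/ (1-\cos(\pi/2))}=\frac{\theta\sin(\theta/2)}{\pi}\cdot\frac{1}{1-\cos(\theta/2)}$. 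This is not obviously $(\pi\sin(\theta/2)/\theta)^2$, so the clean endpoint must come from a different limit or from a genuine monotonicity-in-$k$ argument rather than the $k\to0$ limit.

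Accordingly, the real plan is: fix $\theta$ and study $k\mapsto Q(k,\theta)$ on $(0,1)$ directly, showing it is monotone (this is where Proposition \ref{prop_triginc}(3) on $t\sin t/\sin^2(t/2)$ enters, controlling the $\theta$-versus-$\pi$ comparison of the arguments uniformly), so that the extrema of $Q(k,\theta)$ over $k$ are attained at $k\to0^+$ and $k\to1^-$. At $k\to1^-$ both arguments $q(k,\theta),q(k,\pi)\to\infty$ (the denominators $\sin^2((1-k)\cdot)\to0$), and $Q\to \lim q(k,\theta)/q(k,\pi)$ along that end, which is $\frac{\sin(\theta/2)\cdot (\theta/2)\cdot \pi^2}{\pi\cdot(\pi/2)\cdot\theta^2}$ after expanding $\sin(k\theta/2)\to\sin(\theta/2)$ and $\sin^2((1-k)\theta/4)\sim((1-k)\theta/4)^2$: this gives exactly $(\pi\sin(\theta/2)/\theta)^2$ — wait, let me recompute: $q(k,\theta)\sim \sin(\theta/2)\sin(\theta/2)/((1-k)\theta/4)^2$ as $k\to1$, i.e. $\sim 16\sin^2(\theta/2)/((1-k)^2\theta^2)$, hence $q(k,\theta)/q(k,\pi)\to \frac{\sin^2(\theta/2)/\theta^2}{\sin^2(\pi/2)/\pi^2}=(\pi\sin(\theta/2)/\theta)^2$. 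Good — that is the endpoint. At $k\to0^+$ both $\to0$ and $Q\to 1$ (since $\log(1+\epsilon)/\log(1+\epsilon')\to\epsilon/\epsilon'$ and by L'Hôpital $q(k,\theta)/q(k,\pi)\to$ the derivative ratio $= \frac{\sin(\theta/2)\cdot(\theta/2)/\sin^2(\theta/4)}{(\pi/2)/\sin^2(\pi/4)}=\frac{\theta\sin(\theta/2)}{\pi(1-\cos(\theta/2))}$; hmm, that is not $1$ either). \textbf{The main obstacle} is exactly this: identifying the correct endpoint behaviour and proving the in-$k$ monotonicity of $Q(k,\theta)$. I would resolve it by writing $Q(k,\theta)=\frac{\log(1+\mu q)}{\log(1+q)}$ with $q=q(k,\pi)$ and $\mu=\mu(k,\theta)$, noting $q$ ranges over all of $(0,\infty)$ as $k$ ranges over $(0,1)$ (so we may use $q$ as the variable), and then showing $\mu(k,\theta)$ is itself \emph{monotone in $k$} with endpoint values $1$ (at the end where $q\to 0$) and $(\pi\sin(\theta/2)/\theta)^2$ (at the end where $q\to\infty$) — these endpoint values being computed by the asymptotics above, now done carefully. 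Given $\mu$ monotone between these two values and Proposition \ref{prop_triginc}(4), a short envelope argument bounds $Q$ between $1$ and $(\pi\sin(\theta/2)/\theta)^2$, with the inequalities reversing for $\theta>\pi$ because then $\mu\ge 1$ throughout. The verification that $\mu(k,\theta)$ is monotone in $k$ is the one place a genuine calculation is unavoidable; I would organise it through Proposition \ref{prop_triginc}(1)--(3), each of whose three monotone quotients matches one of the three factors of $\mu$.
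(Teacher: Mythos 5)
Your proposal assembles the right raw materials (the ratio $\mu(k,\theta)=q(k,\theta)/q(k,\pi)$ and its factorisation, Proposition \ref{prop_triginc}(1) for monotonicity in $k$, Proposition \ref{prop_triginc}(4) for the log--quotient), but as written it has genuine gaps. First, your endpoint asymptotics for $Q$ are wrong: as $k\to1^-$ both arguments tend to $\infty$, and a quotient of logarithms of quantities tending to infinity tends to $1$ whenever the quotient of the quantities themselves has a finite positive limit; hence $\lim_{k\to1^-}Q(k,\theta)=1$, not $(\pi\sin(\theta/2)/\theta)^2$. The value $(\pi\sin(\theta/2)/\theta)^2$ is the limit of $\mu(k,\theta)$, not of $Q$. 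The replacement of $\log(1+\epsilon)/\log(1+\epsilon')$ by $\epsilon/\epsilon'$ is only legitimate at the end $k\to0^+$, where, as you yourself computed, the limit is $\theta\sin(\theta/2)/\bigl(2\pi\sin^2(\theta/4)\bigr)$, which is not $1$ either; your final ``envelope'' paragraph nevertheless asserts that $\mu\to1$ at that end, contradicting your own calculation. Second, your ``real plan'' rests on the monotonicity of $k\mapsto Q(k,\theta)$, for which you offer nothing beyond a gesture at Proposition \ref{prop_triginc}(3); this monotonicity is exactly what the paper leaves open as Conjecture \ref{conj_quoQlimits}, supported only by numerical experiments, so it cannot be assumed. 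Also note the sign slips: for $\theta<\pi$ one has $\mu(k,\theta)\ge1$, not $\le1$, and trivially $Q\ge1$ iff $\mu\ge1$.

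What does work --- and is the paper's proof --- is the constant-envelope version of your first idea with the signs straightened out. For $\theta<\pi$, Proposition \ref{prop_triginc}(1) shows $\mu(k,\theta)$ is increasing in $k$, so $\mu_0(\theta)\le\mu(k,\theta)\le\mu_1(\theta)$ with $\mu_0(\theta)=\theta\sin(\theta/2)/\bigl(2\pi\sin^2(\theta/4)\bigr)$ (limit at $k\to0^+$) and $\mu_1(\theta)=(\pi\sin(\theta/2)/\theta)^2$ (limit at $k\to1^-$); one then needs the step you have not supplied, namely that both $\mu_0$ and $\mu_1$ are $\ge1$ for $\theta<\pi$ and $\le1$ for $\theta>\pi$, which the paper gets from Proposition \ref{prop_triginc}(2)--(3): both functions are decreasing in $\theta$ and equal $1$ at $\theta=\pi$. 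Granting this, $q(k,\theta)\ge q(k,\pi)$ gives $Q(k,\theta)\ge1$ at once, while $q(k,\theta)\le\mu_1(\theta)\,q(k,\pi)$ with the \emph{constant} $\mu_1(\theta)\ge1$ and Proposition \ref{prop_triginc}(4) give $Q(k,\theta)\le\sup_{q>0}\log(1+\mu_1(\theta)q)/\log(1+q)=\lim_{q\to0^+}\log(1+\mu_1(\theta)q)/\log(1+q)=\mu_1(\theta)$, the quotient being decreasing in $q$ because $\mu_1(\theta)\ge1$ (so its supremum sits at $q\to0^+$, the opposite of what you wrote); here one also uses that $q(k,\pi)$ runs over all of $(0,\infty)$. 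The case $\theta>\pi$ is the mirror image with every inequality reversed.
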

\begin{proof}
Note that the quotient $Q(k,\theta)$ can be written as $\log(1+q_0(k,\theta))\slash\log(1+q_1(k))$, where
\begin{align}\label{exp_q0q1}
q_0(k,\theta)=\frac{\sin(\theta\slash2)\sin(k\theta\slash2)}{\sin^2((1-k)\theta\slash4)},\quad
q_1(k)=\frac{\sin(k\pi\slash2)}{\sin^2((1-k)\pi\slash4)}.
\end{align}
Clearly,
\begin{align*}
\frac{q_0(k,\theta)}{q_1(k)}=\sin(\theta\slash2)\frac{\sin(k\theta\slash2)}{\sin(k\pi\slash2)}\left(\frac{\sin((1-k)\pi\slash4)}{\sin((1-k)\theta\slash4)}\right)^2 \,.
\end{align*}
Trivially, $q_0(k,\theta)\slash q_1(k)=1$, whenever $\theta=\pi$. It follows from Proposition \ref{prop_triginc}(1) that $q_0(k,\theta)\slash q_1(k)$ is increasing with respect to $k$ if $0<\theta<\pi$, and decreasing if $\pi<\theta<2\pi$. Thus, this quotient is bounded by its limit values
\begin{align*}
\mu_0(\theta)\equiv\lim_{k\to0^+}\frac{q_0(k,\theta)}{q_1(k)}=\frac{\theta\sin(\theta\slash2)}{2\pi\sin^2(\theta\slash4)},\quad
\mu_1(\theta)\equiv\lim_{k\to1^-}\frac{q_0(k,\theta)}{q_1(k)}=\left(\frac{\pi\sin(\theta\slash2)}{\theta}\right)^2.
\end{align*}
By Proposition \ref{prop_triginc}(2)-(3), both of the functions $\mu_0(\theta)$ and $\mu_1(\theta)$ are decreasing with respect to $0<\theta<2\pi$. Since $\mu_0(\pi)=\mu_1(\pi)=1$, this means that the functions $\mu_0$, $\mu_1$ are greater than or equal to 1 for $0<\theta<\pi$, and less than or equal to 1 for $\pi<\theta<2\pi$. It follows that
\begin{equation}\label{equ_forq0}
\begin{aligned}
&q_1(k)\leq\mu_0(\theta)q_1(k)\leq q_0(k,\theta)\leq\mu_1(\theta)q_1(k),
\text{ if }0<\theta<\pi,\text{ and}\\
&\mu_1(\theta)q_1(k)\leq q_0(k,\theta)\leq\mu_0(\theta)q_1(k)\leq q_1(k),
\text{ if }\pi<\theta<2\pi.    
\end{aligned}
\end{equation}
Recall now the expression for the quotient $q_1(k)$ from \eqref{exp_q0q1}. This quotient $q_1(k)$ must be strictly increasing with respect to $0<k<1$, because it has a strictly increasing positive numerator $\sin(k\pi\slash2)$ and a strictly decreasing positive denominator $\sin^2((1-k)\pi\slash4)$. Since $q_1(k)$ has limit values $\lim_{k\to0^+}q_1(k)=0$ and $\lim_{k\to0^+}q_1(k)=\infty$, it maps the interval $(0,1)$ onto $(0,\infty)$. Furthermore, we already earlier noted that $\mu_1(\theta)\geq1$ if $0<\theta<\pi$, and $\mu_1(\theta)\leq1$ if $\pi<\theta<2\pi$. It follows from these observations, inequalities in \eqref{equ_forq0} and Proposition \ref{prop_triginc}(4) that, if $0<\theta<\pi$,
\begin{align*}
\inf_{0<k<1}Q(k,\theta)
&=\inf_{0<k<1}\frac{\log(1+q_0(k,\theta))}{\log(1+q_1(k))}
\geq\inf_{0<k<1}\frac{\log(1+q_1(k))}{\log(1+q_1(k))}=1,\\
\sup_{0<k<1}Q(k,\theta)
&\leq\sup_{0<k<1}\frac{\log(1+\mu_1(\theta)q_1(k))}{\log(1+q_1(k))}
=\lim_{q_1\to0^+}\frac{\log(1+\mu_1(\theta)q_1)}{\log(1+q_1)}
=\mu_1(\theta),
\end{align*}
and, if $\pi<\theta<2\pi$,
\begin{align*}
\inf_{0<k<1}Q(k,\theta)
&\geq\inf_{0<k<1}\frac{\log(1+\mu_1(\theta)q_1(k))}{\log(1+q_1(k))}
=\lim_{q_1\to0^+}\frac{\log(1+\mu_1(\theta)q_1)}{\log(1+q_1)}
=\mu_1(\theta),\\
\sup_{0<k<1}Q(k,\theta)
&\leq\sup_{0<k<1}\frac{\log(1+q_1(k))}{\log(1+q_1(k))}=1,
\end{align*}
which proves the theorem.
\end{proof}

It can be verified that the number 1 in the inequalities of Theorem \ref{thm_quoQlimits} is the best possible constant by showing that it is the limit value of the quotient $Q(k,\theta)$ whenever $k\to1^-$. However, the bound $(\pi\sin(\theta\slash2)\slash\theta)^2$ in Theorem \ref{thm_quoQlimits} does not seem to be sharp. According to several numerical test, the quotient $Q(k,\theta)$ is monotonic with respect to $k$, either decreasing when $0<\theta\leq\pi$ or increasing when $\pi\leq\theta<2\pi$, which would lead into the following result.

\begin{conjecture}\label{conj_quoQlimits}
For all $0<k<1$ and $0<\theta<2\pi$, the quotient $Q(k,\theta)$ fulfills
\begin{align*}
1&=\lim_{k\to1^-}Q(k,\theta)\leq Q(k,\theta)\leq\lim_{k\to0^+}Q(k,\theta)
=\frac{\theta\sin(\theta\slash2)}{2\pi\sin^2(\theta\slash4)},&\text{if }\theta<\pi,\\
\frac{\theta\sin(\theta\slash2)}{2\pi\sin^2(\theta\slash4)}&=\lim_{k\to0^+}Q(k,\theta)\leq Q(k,\theta)\leq\lim_{k\to1^-}Q(k,\theta)
=1,&\text{if }\theta>\pi.
\end{align*}
\end{conjecture}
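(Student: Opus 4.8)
\textbf{Proof proposal for Conjecture~\ref{conj_quoQlimits}.}
The plan is to reduce the two-sided bound entirely to a monotonicity statement: if $Q(k,\theta)$ is monotone in $k\in(0,1)$ for each fixed $\theta$ (decreasing when $\theta<\pi$, increasing when $\theta>\pi$), then the extreme values are exactly the one-sided limits $\lim_{k\to1^-}Q(k,\theta)$ and $\lim_{k\to0^+}Q(k,\theta)$, which have already been identified in the proof of Theorem~\ref{thm_quoQlimits} as $\mu_1(\theta)=(\pi\sin(\theta\slash2)\slash\theta)^2$ and (after an elementary computation, writing $Q=\log(1+q_0)\slash\log(1+q_1)$ and using $q_0\slash q_1\to\mu_0$ together with $q_0,q_1\to0$ as $k\to1^-$, wait---one must be careful: as $k\to1^-$ one has $q_1\to\infty$, not $0$) --- so in fact $\lim_{k\to1^-}Q(k,\theta)=1$ because both $q_0$ and $q_1$ blow up while their ratio stays bounded, and $\lim_{k\to0^+}Q(k,\theta)=\mu_0(\theta)=\theta\sin(\theta\slash2)\slash(2\pi\sin^2(\theta\slash4))$ because $q_0,q_1\to0$ and $q_0\slash q_1\to\mu_0$ so $\log(1+q_0)\slash\log(1+q_1)\to q_0\slash q_1\to\mu_0$. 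Thus the whole conjecture is equivalent to the monotonicity of $k\mapsto Q(k,\theta)$, and since $Q(k,\pi)\equiv1$ the case $\theta=\pi$ is trivial.

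Next I would attack the monotonicity itself. Set $N(k,\theta)=\log(1+q_0(k,\theta))$ and $D(k)=\log(1+q_1(k))$, both positive and (as noted in the existing proof) $D$ strictly increasing with $D(0^+)=0$, $D(1^-)=\infty$. One natural route is the monotone l'Hôpital rule \cite[Thm B.2, p. 465]{hkvbook}, which would give monotonicity of $N\slash D$ from monotonicity of $N_k\slash D_k$ (derivatives in $k$), provided $N,D$ vanish at a common endpoint; here $N(0^+)=D(0^+)=0$, so the rule applies at $k=0^+$ and it would suffice to show $\partial_k N\slash\partial_k D = \bigl(q_0'\slash(1+q_0)\bigr)\slash\bigl(q_1'\slash(1+q_1)\bigr)$ is monotone in $k$ with the correct direction. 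Expanding, this ratio equals $\dfrac{q_0'}{q_1'}\cdot\dfrac{1+q_1}{1+q_0}$, so one is led to study the two factors separately: the factor $(1+q_1)\slash(1+q_0)$ is controlled by the sign of $q_1-q_0=q_1(1-q_0\slash q_1)$, where $q_0\slash q_1$ was already shown to be monotone in $k$ (increasing for $\theta<\pi$, decreasing for $\theta>\pi$) with value $1$ at $\theta=\pi$; the factor $q_0'\slash q_1'$ requires differentiating the explicit trigonometric expressions in \eqref{exp_q0q1}. This is where the bookkeeping becomes heavy: $q_0' $ and $q_1'$ each involve products and quotients of $\sin(k\theta\slash2)$, $\sin((1-k)\theta\slash4)$ and their derivatives, so the cleaner approach may be to take logarithmic derivatives, i.e.\ study $\partial_k\log q_0 = \tfrac{\theta}{2}\cot(k\theta\slash2)+\tfrac{\theta}{2}\cot((1-k)\theta\slash4)$ versus $\partial_k\log q_1$, and compare these against $\partial_k\log(1+q_0)$ and $\partial_k\log(1+q_1)$.

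An alternative, possibly more robust, route avoids derivatives altogether: use the substitution hinted at by Proposition~\ref{prop_triginc}(4). Since $q_1\colon(0,1)\to(0,\infty)$ is a strictly increasing bijection, reparametrise by $q=q_1(k)$ and write $q_0 = \phi_\theta(q)\cdot q$ where $\phi_\theta$ is (the conjugated version of) the monotone function $q_0\slash q_1$; then $Q = \log(1+\phi_\theta(q)q)\slash\log(1+q)$. If one could show that $\phi_\theta$ is itself monotone as a function of $q$ (not just of $k$, but this follows since $q_1$ is monotone) \emph{and} that the two-variable map $(\phi,q)\mapsto\log(1+\phi q)\slash\log(1+q)$ interacts with the monotone $\phi_\theta$ so as to preserve monotonicity, the result would follow from a clean chain-rule argument combined with Proposition~\ref{prop_triginc}(4) applied pointwise. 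The main obstacle, in either route, is exactly the sign analysis of the combination $q_0'\slash q_1'\cdot(1+q_1)\slash(1+q_0)$ (equivalently, showing the reparametrised $\phi_\theta$ does not "undo" the monotonicity contributed by the $\log$): the two competing factors pull in opposite directions near $k=1^-$, where $q_0,q_1\to\infty$ but their ratio tends to the finite limit $\mu_0(\theta)\neq1$, so one cannot hope for a crude term-by-term comparison and will need a genuine trigonometric inequality --- most plausibly derived, as in the proof of Proposition~\ref{prop_triginc}, by isolating an auxiliary single-variable function of $k\theta$ and checking its sign by elementary calculus.
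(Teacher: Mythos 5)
There is a genuine gap: you have not proved the statement, and indeed you could not have reproduced "the paper's proof" because the paper offers none --- this is stated as a conjecture, supported only by numerical experiments. What you establish correctly are the two endpoint limits: writing $Q=\log(1+q_0)\slash\log(1+q_1)$ with $q_0,q_1$ as in \eqref{exp_q0q1}, one has $q_0,q_1\to0$ and $q_0\slash q_1\to\mu_0(\theta)$ as $k\to0^+$, giving $Q\to\mu_0(\theta)=\theta\sin(\theta\slash2)\slash(2\pi\sin^2(\theta\slash4))$, while $q_0,q_1\to\infty$ with bounded ratio as $k\to1^-$, giving $Q\to1$ (your mid-sentence self-correction on this point is right). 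But the limits alone are not the conjecture; the new content beyond Theorem~\ref{thm_quoQlimits} is the inequality $Q(k,\theta)\leq\mu_0(\theta)$ for $\theta<\pi$ (respectively $Q(k,\theta)\geq\mu_0(\theta)$ for $\theta>\pi$), and your argument reduces this to the monotonicity of $k\mapsto Q(k,\theta)$ --- which is precisely the unproven observation the authors themselves make just before stating the conjecture. Your reduction is also slightly overstated: monotonicity implies the two-sided bound, but the conjecture is not \emph{equivalent} to it, since $Q$ could stay between its endpoint limits without being monotone.

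The two routes you sketch for the monotonicity both stop exactly where the difficulty begins. The monotone l'H\^opital route requires showing that $\bigl(q_0'\slash(1+q_0)\bigr)\slash\bigl(q_1'\slash(1+q_1)\bigr)$ is monotone in $k$ with the correct direction; you factor it as $(q_0'\slash q_1')\cdot(1+q_1)\slash(1+q_0)$ and observe the factors "pull in opposite directions near $k=1^-$", but you neither produce the needed trigonometric inequality nor show that the known monotonicity of $q_0\slash q_1$ (Proposition~\ref{prop_triginc}(1)) suffices --- and it does not obviously suffice, since Proposition~\ref{prop_triginc}(4) only controls $\log(1+\mu q)\slash\log(1+q)$ for \emph{constant} $\mu$, whereas here the effective $\mu=\phi_\theta(q)$ varies with $q$ and drifts from $\mu_0(\theta)$ to $\mu_1(\theta)$, so a pointwise application gives back only the bounds already proved in Theorem~\ref{thm_quoQlimits}. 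The reparametrisation route has the same unresolved core: you would need a quantitative statement about how fast $\phi_\theta(q)$ varies relative to the logarithms, and no such estimate is attempted. In short, the proposal is a plausible research plan whose decisive step --- the "genuine trigonometric inequality" you defer --- is exactly the open problem; as it stands the conjecture remains unproved.
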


Finally, we will have the following result.

\begin{corollary}\label{cor_rhodeltainS}
For all $0<\theta<2\pi$ and $x,y\in S_\theta$, the following inequalities hold:
\begin{align*}
(1)\quad&\rho_{S_\theta}(x,y)\leq\delta_{S_\theta}(x,y)\leq(\pi\sin(\theta\slash2)\slash\theta)^2\rho_{S_\theta}(x,y),&\text{if }\theta<\pi,\\
(2)\quad&\delta_{S_\theta}(x,y)=\rho_{S_\theta}(x,y),&\text{if }\theta=\pi,\\
(3)\quad&(\pi\sin(\theta\slash2)\slash\theta)^2\rho_{S_\theta}(x,y)\leq \delta_{S_\theta}(x,y),&\text{if }\theta>\pi.
\end{align*}
\end{corollary}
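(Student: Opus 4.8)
The plan is to obtain all three parts as an immediate consequence of two results already established: the two-sided estimate of the ratio $\delta_{S_\theta}(x,y)/\rho_{S_\theta}(x,y)$ by the extremal quotient $Q(k,\theta)$ in Corollary \ref{cor_Qwlg}, together with the explicit bounds on $Q(k,\theta)$ in Theorem \ref{thm_quoQlimits}. We may assume $x\neq y$, since for $x=y$ every inequality reduces to $0\leq 0$.

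For $0<\theta<\pi$, Corollary \ref{cor_Qwlg} gives
\[
\inf_{0<k<1}Q(k,\theta)\;\leq\;\frac{\delta_{S_\theta}(x,y)}{\rho_{S_\theta}(x,y)}\;\leq\;\sup_{0<k<1}Q(k,\theta),
\]
while Theorem \ref{thm_quoQlimits} asserts $1\leq Q(k,\theta)\leq(\pi\sin(\theta/2)/\theta)^2$ for every $k\in(0,1)$, hence $\inf_{0<k<1}Q(k,\theta)\geq 1$ and $\sup_{0<k<1}Q(k,\theta)\leq(\pi\sin(\theta/2)/\theta)^2$. Multiplying the displayed chain by $\rho_{S_\theta}(x,y)>0$ yields part (1). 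For $\theta=\pi$, I would simply note that $S_\pi$ is a half-plane, so $\delta_{S_\pi}=\rho_{S_\pi}$ by Theorem \ref{thm_seitminvariant}; equivalently, $q_0(k,\pi)=q_1(k)$ identically, whence $Q(k,\pi)\equiv 1$ and part (2) follows again from Corollary \ref{cor_Qwlg}. For $\pi<\theta<2\pi$, Corollary \ref{cor_Qwlg} provides only the lower bound $\inf_{0<k<1}Q(k,\theta)\leq\delta_{S_\theta}(x,y)/\rho_{S_\theta}(x,y)$, and Theorem \ref{thm_quoQlimits} gives $Q(k,\theta)\geq(\pi\sin(\theta/2)/\theta)^2$ for all $k\in(0,1)$, so $\inf_{0<k<1}Q(k,\theta)\geq(\pi\sin(\theta/2)/\theta)^2$; multiplying by $\rho_{S_\theta}(x,y)$ gives part (3).

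There is no genuine obstacle here: the corollary is merely a repackaging of Corollary \ref{cor_Qwlg} and Theorem \ref{thm_quoQlimits}, and the only care needed is the trivial case $x=y$ and a consistency check between the two justifications at the borderline $\theta=\pi$. All the substance — the reduction, via the conformal map of Lemma \ref{lem_confmappingtoH}, to points symmetric about the angle bisector with equal modulus, and the monotonicity analysis of $q_0(k,\theta)/q_1(k)$ together with that of the limit functions $\mu_0$ and $\mu_1$ — has already been dispatched upstream, so the write-up should be only a few lines.
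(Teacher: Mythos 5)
Your proposal is correct and follows essentially the same route as the paper, whose proof is exactly the one-line deduction from Corollary \ref{cor_Qwlg}, Theorem \ref{thm_quoQlimits}, and the identity $Q(k,\pi)=1$ (your alternative half-plane justification of part (2) via Theorem \ref{thm_seitminvariant} is an equivalent remark). No gaps.
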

\begin{proof}
Follows from Corollary \ref{cor_Qwlg}, Theorem \ref{thm_quoQlimits} and the fact that $Q(k,\pi)=1$.
\end{proof}

\begin{remark}\label{rmk_stricterBoundsIfConj}
If Conjecture \ref{conj_quoQlimits} holds, then the coefficient $(\pi\sin(\theta\slash2)\slash\theta)^2$ in Corollary \ref{cor_rhodeltainS} can be replaced by $\theta\sin(\theta\slash2)\slash(2\pi\sin^2(\theta\slash4))$, which gives us even sharper bounds for the M\"obius metric.
\end{remark}

Note that Corollary \ref{cor_rhodeltainS} does not offer an upper bound for the metric $\delta_{S_\theta}$ in terms of $\rho_{S_\theta}$ in the case $\theta>\pi$. As stated in Corollary \ref{cor_Qwlg}, $Q(k,\theta)$ is only a lower limit for the quotient $\delta_{S_\theta}(x,y)\slash\rho_{S_\theta}(x,y)$, so the result of Theorem \ref{thm_quoQlimits} gives us this upper bound. Specifically, even though $Q(k,\theta)\leq1$ for $\pi<\theta<2\pi$, the inequality $\delta_{S_\theta}(x,y)\leq\rho_{S_\theta}(x,y)$ does not hold, as will be shown next.

\begin{lemma}\label{lemma_dOverRhoIfOverPi}
For all $\pi<\theta<2\pi$, there are some points $x,y\in S_\theta$ such that $\delta_{S_\theta}(x,y)>\rho_{S_\theta}(x,y)$.
\end{lemma}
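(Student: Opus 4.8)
The plan is to exhibit an explicit pair of points for which the M\"obius distance strictly exceeds the hyperbolic distance, using the comparison with the distance ratio metric from Theorem~\ref{thm_deltaj} together with an asymptotic analysis near the vertex of the sector. The cleanest candidates are points sitting on the two bounding rays of $S_\theta$, or close to them, because there the boundary behaviour of $\delta_{S_\theta}$ is governed by the cross-ratio with $a=0$ and $b=\infty$, which for $\theta>\pi$ dominates the cross-ratio coming from the ``visible'' corner (compare Example~\ref{ex_crossratiooverpi}).

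Concretely, I would take $x=re^{(1-k)\theta i/2}$ and $y=re^{(1+k)\theta i/2}$ symmetric about the angle bisector and let $k\to 1^-$, i.e.\ let the points slide out toward the two sides of the sector. By Corollary~\ref{cor_deltaInS} (or directly from the formula in Lemma~\ref{lem_Qktheta} for the denominator) we have the lower bound
\begin{align*}
\delta_{S_\theta}(x,y)\geq\log\left(1+\frac{\sin(\theta/2)\sin(k\theta/2)}{\sin^2((1-k)\theta/4)}\right),
\end{align*}
while $\rho_{S_\theta}(x,y)=\log\left(1+\dfrac{\sin(k\pi/2)}{\sin^2((1-k)\pi/4)}\right)$ by \eqref{rho_distforxyk}. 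As $k\to1^-$ both quantities blow up, so it suffices to compare the arguments of the logarithms, and their ratio tends to $\mu_1(\theta)=(\pi\sin(\theta/2)/\theta)^2$ by the computation of $\mu_1$ in the proof of Theorem~\ref{thm_quoQlimits}; but $\mu_1(\theta)<1$ for $\theta>\pi$, so this route actually shows $\delta/\rho\to\mu_1(\theta)<1$ along this family, which is the \emph{wrong} direction. Hence symmetric points near the sides do \emph{not} work, and I must instead use asymmetric points very close to one side of the sector, where $j_{S_\theta}$ already beats $\rho_{S_\theta}$.

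So the actual argument I would run: fix a point $p$ on (the closure of) one side of $S_\theta$ near the vertex, perturb slightly into the domain, and take $x,y$ both near $p$ with $|x-y|$ small; then $d_{S_\theta}(x)$ and $d_{S_\theta}(y)$ are of the same order as the distance to that side, so $j_{S_\theta}(x,y)$ is comparable to $|x-y|/d_{S_\theta}(x)$, while the quasihyperbolic (hence hyperbolic) distance near a reflex corner is \emph{smaller} than near a straight edge because the sector is ``wide'': one has $\rho_{S_\theta}(x,y)\le 2k_{S_\theta}(x,y)$ and the bound $\delta_{S_\theta}(x,y)\ge j_{S_\theta}(x,y)$; I would compare these using Theorem~\ref{thm_rhos}(3), which says ${\rm th}(\rho_{S_\theta}(x,y)/2)\le s_{S_\theta}(x,y)$ for $\theta\in(\pi,2\pi)$, combined with Corollary~\ref{cor_deltasj}(2) giving ${\rm th}(\delta_{S_\theta}(x,y)/2)\ge s_{S_\theta}(x,y)/2$. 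These two only yield $\delta\ge 2{\rm arth}({\rm th}(\rho/2)/2)$, which is weaker than $\delta>\rho$. The decisive step is therefore to pick a configuration where $s_{S_\theta}(x,y)$ is close to $1$: for instance $x,y$ on a circular arc through $0$ and $\infty$ inside the mapped domain $B^2((1-u)i,u)\cup B^2((u-1)i,u)$ of the preceding corollary, symmetric across the imaginary axis, so that the nearest boundary point forces $s_{S_\theta}$ near its maximum; then ${\rm th}(\delta/2)$ is bounded below near $1$, hence $\delta$ is large, while ${\rm th}(\rho/2)\le s_{S_\theta}<1$ stays bounded away from $1$, i.e.\ $\rho$ stays bounded — giving $\delta>\rho$.

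Here is the concrete realization I would write up. In the model domain $\Omega=B^2((1-u)i,u)\cup B^2((u-1)i,u)$ with $u=1/(1-\cos(\theta/2))\in(1/2,1)$, take $x=\varepsilon i$ and $y=-\varepsilon i$ for small $\varepsilon>0$ — two points on the imaginary axis straddling the neck of the domain at $0$. One computes directly: $d_\Omega(x)=d_\Omega(y)$ equals the distance from $\varepsilon i$ to $\partial\Omega$; for small $\varepsilon$ this is $O(\varepsilon)$ since $0\in\partial\Omega$ is a corner, so $j_\Omega(x,y)=\log(1+|x-y|/d_\Omega(x))$ stays bounded \emph{below} by a positive constant $c(\theta)>0$ as $\varepsilon\to0$, and in fact by taking $\varepsilon$ appropriately this ratio is large. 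Meanwhile $\rho_\Omega(x,y)$: since $\Omega$ contains a genuine neighbourhood-sector of angle $\theta$ at $0$ (after the conformal straightening, $\Omega$ near $0$ looks like $S_\theta$), the hyperbolic distance scales like $\rho_{S_\theta}(\varepsilon i', -\varepsilon i')$-type quantity which, pulling back by $z\mapsto z^{\pi/\theta}$, is the hyperbolic distance in $\uhp^2$ between two points symmetric about $\uhp^2$'s imaginary axis at comparable heights; one checks this stays bounded and, crucially, is \emph{smaller} than $\delta$ because the compression factor $\pi/\theta<1$ shrinks hyperbolic lengths near the reflex corner while the M\"obius metric (tracking $j$, hence raw Euclidean ratios) does not shrink. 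I expect the main obstacle to be making this last comparison quantitative — turning the heuristic ``$\delta\sim j$ does not feel the corner, $\rho$ does'' into a clean inequality — and I would handle it by choosing the single pair $x=\varepsilon_\theta i,\ y=-\varepsilon_\theta i$ with $\varepsilon_\theta$ small enough (explicitly in terms of $\theta$) that Theorem~\ref{thm_deltaj} gives $\delta_\Omega(x,y)\ge j_\Omega(x,y)\ge\log(1+C)$ while Theorem~\ref{thm_krho} together with an explicit hyperbolic-length estimate (via the map to $\uhp^2$, where the distance is an elementary ${\rm arth}$ expression as in \eqref{rho_distforxyk}) gives $\rho_\Omega(x,y)<\log(1+C)$; pulling back through $f$ via M\"obius invariance of $\delta$ (Theorem~\ref{thm_seitminvariant}) and conformal invariance of $\rho$ then yields the claimed points in $S_\theta$ itself.
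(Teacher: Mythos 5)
Your write-up does not reach a proof, and the decisive concrete step is based on a geometric misidentification. In the model domain $\Omega=B^2((1-u)i,u)\cup B^2((u-1)i,u)$ with $u=1\slash(1-\cos(\theta\slash2))\in(1\slash2,1)$, the point $0$ is \emph{not} a boundary corner: it is $f(re^{\theta i\slash2})$, an interior point of $\Omega$ (the two bounding arcs meet at the real points $f(0)=-f(\infty)=\pm\sin(\theta\slash2)\slash(1-\cos(\theta\slash2))$, cf.\ \eqref{mappingsOfF} and Figure \ref{figN}). Consequently $d_\Omega(\pm\varepsilon i)$ does not behave like $O(\varepsilon)$; it tends to the positive constant $2u-1$ as $\varepsilon\to0$, so $j_\Omega(\varepsilon i,-\varepsilon i)=\log(1+2\varepsilon\slash d_\Omega(\varepsilon i))\to0$ and the claimed uniform lower bound $j_\Omega\geq c(\theta)>0$, hence the bound $\delta_\Omega\geq\log(1+C)$, evaporates. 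The remaining comparison ("$\delta\sim j$ does not feel the corner, $\rho$ does") is explicitly left as a heuristic to be made quantitative, so the strict inequality $\delta_{S_\theta}(x,y)>\rho_{S_\theta}(x,y)$ is never actually established for any pair of points.

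The irony is that the ingredient you named in your opening paragraph already suffices, applied to the symmetric family you discarded. You rejected $x=re^{(1-k)\theta i\slash2}$, $y=re^{(1+k)\theta i\slash2}$ only after testing the limit $k\to1^-$ against the \emph{corner} cross-ratio $|r,x,re^{\theta i},y|$; the paper instead lets $k\to0^+$ and uses the boundary pair $a=0$, $b=\infty$ from Example \ref{ex_crossratiooverpi}, which for $\theta>\pi$ is exactly the dominant one in that regime. Indeed $\delta_{S_\theta}(x,y)\geq\log(1+|0,x,\infty,y|)=\log(1+2\sin(k\theta\slash2))$, while $\rho_{S_\theta}(x,y)=\log\bigl(1+\sin(k\pi\slash2)\slash\sin^2((1-k)\pi\slash4)\bigr)$ by \eqref{rho_distforxyk}; as $k\to0^+$ the first behaves like $k\theta$ and the second like $k\pi$, so the quotient $\delta_{S_\theta}(x,y)\slash\rho_{S_\theta}(x,y)$ has liminf at least $\theta\slash\pi>1$, and any sufficiently small $k$ gives the required pair of points. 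No passage to the lens/union-of-disks model, no $j$--$k$--$\rho$ chain, and no unquantified corner asymptotics are needed.
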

\begin{proof}
For $x=e^{(1-k)\theta i\slash2}$ and $y=e^{(1+k)\theta i\slash2}$ with $0<k<1$, the distance $\rho_{S_\theta}$ is as in the proof of Lemma \ref{lem_Qktheta} and, consequently,
\begin{align*}
\lim_{k\to0^+}\frac{\delta_{S_\theta}(x,y)}{\rho_{S_\theta}(x,y)}
\geq\lim_{k\to0^+}\frac{|0,x,\infty,y|}{\rho_{S_\theta}(x,y)}
=\lim_{k\to0^+}\frac{\log(1+2\sin(k\theta\slash2))}{\log\left(1+\sin(k\pi\slash2)\slash\sin^2((1-k)\pi\slash4)\right)}
=\frac{\theta}{\pi}>1.
\end{align*}
\end{proof}

Finally, we can combine the inequalities of Corollary \ref{cor_rhodeltainS} with our earlier results from Section \ref{sct3} in order to show that Theorem \ref{thm_finalfordeltaS} holds.

\begin{nonsec}{\bf Proof of Theorem \ref{thm_finalfordeltaS}.}
Follows directly from Corollaries \ref{cor_rhodeltaWithS}, \ref{cor_rhodeltaSuni} and \ref{cor_rhodeltainS}. Note that Theorem \ref{thm_finalfordeltaS} only contains the best ones out of the bounds found and, for instance, the lower bound for $\delta_{S_\theta}(x,y)$ in Corollary \ref{cor_rhodeltaWithS}(1) is never better than the one in Corollary \ref{cor_rhodeltainS}(1). 
Similarly, it can be shown that Corollary \ref{cor_rhodeltainS} has always better lower bounds than Corollary \ref{cor_rhodeltaSuni}.
\end{nonsec}

\section{M\"obius metric under quasiregular mappings}\label{sct6}

In this section, we will yet briefly consider the behaviour of the M\"obius metric under $K$-quasiregular mappings. This topic has already been researched in \cite{S99}, see for instance \cite[Thm 5.12, pp. 528-529]{S99}, but we can improve the existing results with our new bounds for the M\"obius metric in sector domains. However, let us first define all the concepts needed.

\begin{definition} \cite[pp. 289-288]{hkvbook}
Choose a domain $G\subset\R^n$ and let the function $f:G\to\R^n$ be ${\rm ACL}^n$, as defined in \cite[p. 150]{hkvbook}. Suppose that there exists a constant $K\geq1$ such that the inequality
\begin{align}\label{f_outerdil}
|f'(x)|^n\leq KJ_f(x),\quad
|f'(x)|=\max_{|h|=1}|f'(x)h|,
\end{align}
where $J_f(x)$ is the Jacobian determinant of $f$ at point $x\in G$, holds a.e. in $G$. Then the function $f$ is called \emph{quasiregular} and the smallest constant $K\geq1$ fulfilling the inequality \eqref{f_outerdil} is the \emph{outer dilatation} of $f$. Similarly, the \emph{inner dilatation} of $f$ is the smallest constant $K\geq1$ such that the inequality
\begin{align}\label{f_innerdil}
J_f(x)\leq K\ell(f'(x))^n,\quad
\ell(f'(x))=\min_{|h|=1}|f'(x)h|
\end{align}
holds a.e. in $G$. The function $f$ is \emph{$K$-quasiregular}, if $\max\{K_I(f),K_O(f)\}\leq K$, where $K_I(f)$ and $K_O(f)$ are the inner and the outer dilatation of $f$, respectively.
\end{definition}

See \cite[(7.1), p. 104]{hkvbook} and \cite{V71} for the definition of the conformal modulus of a curve family $\Gamma$ and denote it by $\M(\Gamma)$. For any non-empty subsets $F_0,F_1\subsetneq\R^n$, let $\Delta(F_0,F_1;\R^n)$ be the family of all the closed non-constant curves joining these two subsets $F_0$ and $F_1$. Furthermore, denote the Euclidean line segment between two points $x,y\in(\R^n\cup\{\infty\})$ by $[x,y]$ and let $e_k$ be the $k$th unit vector of the $n$-dimensional space, $k=1,...,n$. Now, we can define the \emph{Gr\"otzsch capacity} \cite[(7.17), p. 121]{hkvbook} as the decreasing homeomorphism $\gamma_n:(1,\infty)\to (0,\infty)$,
\begin{align*}
\gamma_n(s)=\M(\Delta(\overline{\B}^n,[se_1,\infty];\R^n)),\quad s>1.
\end{align*}
Note that, if $n=2$, we have the following explicit formulas \cite[(7.18), p. 122]{hkvbook}
\begin{align*}
\gamma_2(1/r)=\frac{2\pi}{\mu(r)},\quad \mu(r)=\frac{\pi}{2}\frac{\K(\sqrt{1-r^2})}{\K(r)},\quad
\K(r)=\int^1_0 \frac{dx}{\sqrt{(1-x^2)(1-r^2x^2)}}.
\end{align*}
By using the definition of the Gr\"otzsch capacity, we can define also an increasing homeomorphism $\varphi_{K,n}:[0,1]\to[0,1]$, \cite[(9.13), p. 167]{hkvbook}
\begin{align*}
\varphi_{K,n}(r)=\frac{1}{\gamma_n^{-1}(K\gamma_n(1\slash r))}
\quad\text{for}\quad
0<r<1,\,K>0;\quad
\varphi_{K,n}(0)=0,\quad  
\varphi_{K,n}(1)=1,
\end{align*}
and a number $\lambda_n$ \cite[(9.5) p. 157 \& (9.6), p. 158]{hkvbook} 
\begin{align*}
\log\lambda_n=\lim_{t\to\infty}((\gamma_n(t)\slash\omega_{n-1})^{1\slash(1-n)}-\log t),   
\end{align*}
where $\omega_{n-1}$ is the $(n-1)$-dimensional surface area of the unit sphere $S^{n-1}(0,1)$. For every $n\geq2$, $4\leq\lambda_n<2e^{n-1}$, and $\lambda_2=4$.

The Schwarz lemma is one of the most well-known results in the distortion theory and, while its original version is about the distortion of the Euclidean metric under holomorphic functions, there exists the following modified version of the Schwarz lemma that tells about the distortion of the hyperbolic metric under $K$-quasiregular mappings.

\begin{theorem}\label{thm_schforqr}\cite[Thm 16.2, p. 300 \& Thm 16.39, p. 313]{hkvbook}
Let $G,G'\in\{\uhp^n,\B^n\}$, $f:G\to f(G)\subset G'$ be a non-constant $K$-quasiregular mapping and $\alpha=K_I(f)^{1\slash(1-n)}$. Now,
\begin{align*}
&(1)\quad
{\rm th}\frac{\rho_{G'}(f(x),f(y))}{2}
\leq\varphi_{K,n}\left({\rm th}\frac{\rho_G(x,y)}{2}\right)
\leq\lambda_n^{1-\alpha}\left({\rm th}\frac{\rho_G(x,y)}{2}\right)^\alpha,\\
&(2)\quad
\rho_{G'}(f(x),f(y))
\leq K_I(f)(\rho_G(x,y)+\log4)\\
\intertext{holds for all $x,y\in G$. Furthermore, in the two-dimensional case $n=2$,}
&(3)\quad\rho_{G'}(f(x),f(y))\leq c(K)\max\{\rho_G(x,y),\rho_G(x,y)^{1\slash K}\} 
\end{align*}
for all $x,y\in G$, where
\begin{align*}
c(K)=2{\rm arth}(\varphi_{K,2}({\rm th}(1\slash2)))\leq v(K-1)+K, \quad v=\log(2(1+\sqrt{1-1\slash e^2}))<1.3507,    
\end{align*}
as in \cite[Thm 16.39, p. 313]{hkvbook}. Here, $c(K)\to1$ when $K\to1$ and, by the conformal invariance of the hyperbolic metric, the result $(3)$ also holds for any two simply connected planar domains $G$, $G'$ because they can be mapped conformally onto the unit disk $\B^2$.
\end{theorem}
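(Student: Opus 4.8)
Theorem~\ref{thm_schforqr} collects two forms of the quasiregular Schwarz lemma, and here is the route I would take. For part~(1) the first move is a normalization. Since $\rho$ is conformally invariant and every Möbius automorphism of $\B^n$ or $\uhp^n$ is $1$-quasiconformal, pre- and post-composing $f$ with such automorphisms changes neither $K_I$ nor $K_O$. Identifying $G$ with $\B^n$ by a Möbius transformation, fix $x,y\in G$, pick a Möbius automorphism $\sigma$ of $G$ with $\sigma(0)=x$ and one, $\tau$, of $G'$ with $\tau(f(x))=0$, and set $g=\tau\circ f\circ\sigma$, $z=\sigma^{-1}(y)$. Then $g:\B^n\to\B^n$ is $K$-quasiregular with $g(0)=0$ and the same $K_I$, while the formula ${\rm sh}^2(\rho_{\B^n}(0,w)/2)=|w|^2/(1-|w|^2)$ gives ${\rm th}(\rho_G(x,y)/2)=|z|$ and ${\rm th}(\rho_{G'}(f(x),f(y))/2)=|g(z)|$. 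Thus (1) reduces to the estimate $|g(z)|\le\varphi_{K,n}(|z|)$ for a $K$-quasiregular self-map of $\B^n$ fixing the origin.

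To prove that estimate I would run a modulus (capacity) comparison. Write $r=|z|$, $s=|g(z)|$, and consider the ring $\B^n\setminus[0,z]$: by the definition of the Grötzsch capacity together with a rotation, the family $\Gamma$ of curves joining $[0,z]$ to $S^{n-1}(0,1)$ in $\B^n$ has $\M(\Gamma)=\gamma_n(1/r)$. Applying the modulus inequality for $K$-quasiregular mappings (Poletsky's inequality $\M(g\Gamma)\le K_I(g)\,\M(\Gamma)$, with the $K_O$-inequality for the reverse comparison) and the extremal property of the Grötzsch ring — among all rings in $\B^n$ whose bounded complementary component is a continuum joining $0$ to some point of modulus $s$, the radial slit ring $\B^n\setminus[0,se_1]$ has the largest modulus, namely $\gamma_n(1/s)$ — one is led to $\gamma_n(1/s)\le K_I(g)\,\gamma_n(1/r)$. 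Since $\gamma_n$ is a decreasing homeomorphism, this rearranges, via $\varphi_{K,n}(r)=1/\gamma_n^{-1}(K\gamma_n(1/r))$, to $s\le\varphi_{K,n}(r)$. Undoing the normalization proves the first inequality of (1); the second, $\varphi_{K,n}(t)\le\lambda_n^{1-\alpha}t^{\alpha}$ with $\alpha=K_I(f)^{1/(1-n)}$, is the standard power majorant for $\varphi_{K,n}$ extracted from the asymptotics of $\gamma_n$ that define $\lambda_n$.

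Part~(2) then follows by feeding this majorant into $\rho_{G'}(f(x),f(y))=2\,{\rm arth}\,{\rm th}(\rho_{G'}(f(x),f(y))/2)\le 2\,{\rm arth}\big(\lambda_n^{1-\alpha}({\rm th}(\rho_G(x,y)/2))^{\alpha}\big)$ and applying standard elementary estimates relating ${\rm arth}$ and $\log$ (using $\alpha^{-1}=K_I(f)$ when $n=2$, together with $2\,{\rm arth}\,t\le\log(4/(1-t))$ and $1-t^{\alpha}\ge\alpha(1-t)$), which after simplification yields $\rho_{G'}(f(x),f(y))\le K_I(f)(\rho_G(x,y)+\log4)$. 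For the sharpened two-dimensional statement (3) I would specialize to $n=2$, where $\varphi_{K,2}$ is given explicitly through $\mu$, put $c(K)=2\,{\rm arth}(\varphi_{K,2}({\rm th}(1/2)))$, and invoke the known monotonicity and quasi-linearity estimates for $\varphi_{K,2}$ from \cite[Ch.~9 and Thm~16.39]{hkvbook} to get the linear majorant $c(K)\le v(K-1)+K$ with $v=\log(2(1+\sqrt{1-1/e^2}))$; the passage to arbitrary simply connected planar $G,G'$ is then immediate from the Riemann mapping theorem and conformal invariance of $\rho$.

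The delicate step is the capacity comparison underpinning (1): one must apply the modulus-of-curve-families inequality in the correct direction for a \emph{quasiregular} — not merely quasiconformal — map, which brings in branch points and local multiplicity, and then control the arbitrarily distorted continuum $g([0,z])$ via the Grötzsch extremal property. In dimension $2$ for quasiconformal $g$ this is classical, but the general $\R^n$ quasiregular version rests on the full Poletsky–Väisälä modulus machinery.
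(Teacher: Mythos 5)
The paper itself gives no proof of this theorem: it is imported verbatim from \cite[Thm 16.2, p.~300 \& Thm 16.39, p.~313]{hkvbook}, so the only meaningful comparison is with the source's argument. Your reconstruction of part (1) is essentially that standard argument: normalize by M\"obius automorphisms (which are $1$-quasiconformal, preserve $\rho$ and leave $K_I$, $K_O$ unchanged) to reduce to a $K$-quasiregular $g:\B^n\to\B^n$ with $g(0)=0$ and the claim $|g(z)|\le\varphi_{K,n}(|z|)$; then compare the condenser $(\B^n,[0,z])$, whose capacity is $\gamma_n(1/|z|)$, with its image under $g$ via the $K_I$-inequality for condensers, and bound the image capacity from below by $\gamma_n(1/|g(z)|)$ through the Gr\"otzsch extremal property. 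Two small corrections there: the extremal property should say that the Gr\"otzsch ring has \emph{minimal} capacity (equivalently, maximal ring modulus) among rings separating a continuum through $0$ and a point of modulus $s$ from $S^{n-1}$ --- your phrase ``largest modulus, namely $\gamma_n(1/s)$'' has the direction garbled, although the inequality $\gamma_n(1/s)\le K_I\,\gamma_n(1/r)$ you extract is the right one; and the $K_O$-inequality is not needed at all, since the $K_I$ condenser inequality together with monotonicity of capacity under enlarging the domain (using $g(\B^n)\subset\B^n$) suffices.

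Part (2) is where your sketch would break as written: you cannot feed the power majorant into ${\rm arth}$, because $\lambda_n^{1-\alpha}>1$ for $K>1$ and $\lambda_n^{1-\alpha}\,t^{\alpha}\ge 1$ for $t$ near $1$, so the expression $2\,{\rm arth}\bigl(\lambda_n^{1-\alpha}\,t^{\alpha}\bigr)$ is not defined exactly in the range where the additive bound matters; also $\alpha^{-1}=K_I(f)$ only when $n=2$, whereas (2) holds for all $n$. The standard route is instead to use a lower bound of the form $1-\varphi_{K,n}(t)\ge c\,(1-t)^{K_I}$ (with the correct constant coming from the same $\gamma_n$-asymptotics that define $\lambda_n$) together with $1-{\rm th}(\rho\slash2)=2\slash(e^{\rho}+1)$, which converts multiplicative distortion into the additive estimate $\rho_{G'}(f(x),f(y))\le K_I(f)(\rho_G(x,y)+\log 4)$. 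Part (3) you do not really prove but defer to \cite{hkvbook}, which is acceptable here since the paper does exactly the same; your final reduction of (3) to arbitrary simply connected plane domains via the Riemann mapping theorem and conformal invariance of $\rho$ is correct and is precisely the remark made in the statement.
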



\begin{corollary}
If $S_\theta$ is a sector with angle $0<\theta\leq\pi$ and $f:S_\theta\to S_\theta$ is a non-constant $K$-quasiregular mapping, then
\begin{align*}
\delta_{S_\theta}(f(x),f(y))
\leq c(K)\min\left\{2,\left(\frac{\pi\sin(\theta\slash2)}{\theta}\right)^2\right\}\max\{\delta_{S_\theta}(x,y),\delta_{S_\theta}(x,y)^{1\slash K}\}   \end{align*}
for all $x,y\in S_\theta$.
\end{corollary}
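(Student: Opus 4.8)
The plan is to chain three facts: the Schwarz-type distortion bound for the hyperbolic metric under $K$-quasiregular self-maps of $S_\theta$, and the two-sided comparison between $\delta_{S_\theta}$ and $\rho_{S_\theta}$ coming from Theorem \ref{thm_finalfordeltaS}. Throughout, abbreviate $M=\min\{2,(\pi\sin(\theta\slash2)\slash\theta)^2\}$, so that Theorem \ref{thm_finalfordeltaS}(1)--(2) gives, for $0<\theta\leq\pi$ and all $u,v\in S_\theta$,
\begin{align*}
\rho_{S_\theta}(u,v)\leq\delta_{S_\theta}(u,v)\leq M\,\rho_{S_\theta}(u,v).
\end{align*}

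First I would apply Theorem \ref{thm_schforqr}(3), which is valid for self-maps of any simply connected planar domain and in particular for $f:S_\theta\to S_\theta$ (the sector is conformally a disk), to obtain
\begin{align*}
\rho_{S_\theta}(f(x),f(y))\leq c(K)\max\{\rho_{S_\theta}(x,y),\rho_{S_\theta}(x,y)^{1\slash K}\}.
\end{align*}
Then I would sandwich: using the upper bound $\delta_{S_\theta}(f(x),f(y))\leq M\,\rho_{S_\theta}(f(x),f(y))$ on the left and the lower bound $\rho_{S_\theta}(x,y)\leq\delta_{S_\theta}(x,y)$ on the right inside the max. The only slightly delicate point is moving the exponent $1\slash K$ across the inequality $\rho_{S_\theta}(x,y)\leq\delta_{S_\theta}(x,y)$: since $t\mapsto t^{1\slash K}$ is increasing (as $K\geq1$), we get $\rho_{S_\theta}(x,y)^{1\slash K}\leq\delta_{S_\theta}(x,y)^{1\slash K}$, and likewise $\rho_{S_\theta}(x,y)\leq\delta_{S_\theta}(x,y)$, so
\begin{align*}
\max\{\rho_{S_\theta}(x,y),\rho_{S_\theta}(x,y)^{1\slash K}\}\leq\max\{\delta_{S_\theta}(x,y),\delta_{S_\theta}(x,y)^{1\slash K}\}.
\end{align*}
Combining the three displays yields exactly
\begin{align*}
\delta_{S_\theta}(f(x),f(y))\leq c(K)\,M\,\max\{\delta_{S_\theta}(x,y),\delta_{S_\theta}(x,y)^{1\slash K}\},
\end{align*}
which is the claim.

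There is essentially no hard obstacle here; the proof is a routine concatenation of already-established estimates, and the main thing to be careful about is bookkeeping — making sure the comparison constant $M$ is applied to $\rho_{S_\theta}(f(x),f(y))$ (not to $\rho_{S_\theta}(x,y)$), and that the lower comparison $\rho_{S_\theta}\leq\delta_{S_\theta}$ is applied at the source points, with the monotonicity of $t\mapsto t^{1/K}$ invoked to handle the $1/K$-power branch of the maximum. One should also note that $c(K)\to1$ as $K\to1$, so in the conformal limit the bound degenerates to the sharp comparison $\delta_{S_\theta}(f(x),f(y))\leq M\,\delta_{S_\theta}(x,y)$, consistent with Theorem \ref{thm_finalfordeltaS}.
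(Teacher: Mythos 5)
Your proof is correct and follows exactly the paper's route: the paper proves this corollary by citing Theorem \ref{thm_finalfordeltaS}(1)--(2) together with Theorem \ref{thm_schforqr}(3), which is precisely the chain you spell out (upper comparison $\delta\leq M\rho$ at the image points, the quasiregular Schwarz bound, then $\rho\leq\delta$ and monotonicity of $t\mapsto t^{1/K}$ at the source points). Your write-up simply makes explicit the bookkeeping that the paper leaves to the reader.
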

\begin{proof}
Follows from Theorems \ref{thm_finalfordeltaS}(1)-(2) and \ref{thm_schforqr}(3). 
\end{proof}

A similar result holds for a non-convex sector.

\begin{corollary}\label{cor0_qrNonConvexSector}
If $S_\theta$ is a sector with angle $\pi<\theta<2\pi$ and $f:S_\theta\to S_\theta$ is a non-constant $K$-quasiregular mapping, then
\begin{align*}
\delta_{S_\theta}(f(x),f(y))
\leq4c(K)
\max\left\{\left(\frac{\theta}{\pi\sin(\theta\slash2)}\right)^2\delta_{S_\theta}(x,y),
\left(\frac{\theta}{\pi\sin(\theta\slash2)}\right)^{2\slash K}\delta_{S_\theta}(x,y)^{1\slash K}\right\}   
\end{align*}
for all $x,y\in S_\theta$.
\end{corollary}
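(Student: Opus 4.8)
The plan is to chain together the three relevant inequalities: the Schwarz-type distortion bound of Theorem~\ref{thm_schforqr}(3) for the hyperbolic metric, and the two-sided comparison between $\delta_{S_\theta}$ and $\rho_{S_\theta}$ from Theorem~\ref{thm_finalfordeltaS}(3) (equivalently Corollary~\ref{cor_rhodeltainS}(3) together with the $\rho$-upper bound in Corollary~\ref{cor_rhodeltaWithS}(2), etc.). Since $S_\theta$ is a simply connected planar domain, Theorem~\ref{thm_schforqr}(3) applies and yields
\begin{align*}
\rho_{S_\theta}(f(x),f(y))\leq c(K)\max\{\rho_{S_\theta}(x,y),\rho_{S_\theta}(x,y)^{1\slash K}\}.
\end{align*}
First I would bound $\rho_{S_\theta}(f(x),f(y))$ from below in terms of $\delta_{S_\theta}(f(x),f(y))$: from Theorem~\ref{thm_finalfordeltaS}(3), for $\theta>\pi$ we have $\delta_{S_\theta}(u,v)\geq(\pi\sin(\theta\slash2)\slash\theta)^2\rho_{S_\theta}(u,v)$, so $\rho_{S_\theta}(f(x),f(y))\geq(\pi\sin(\theta\slash2)\slash\theta)^2\,$ is the wrong direction; instead I want $\rho_{S_\theta}(f(x),f(y))\leq(\theta\slash(\pi\sin(\theta\slash2)))^2\delta_{S_\theta}(f(x),f(y))$ — wait, that is also the wrong direction. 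The correct move is: we need an \emph{upper} bound for $\delta_{S_\theta}(f(x),f(y))$ in terms of $\rho_{S_\theta}(f(x),f(y))$, and then an upper bound for $\rho_{S_\theta}(x,y)$ in terms of $\delta_{S_\theta}(x,y)$.

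So the key steps, in order, are: (i) use Theorem~\ref{thm_finalfordeltaS}(3), upper bound, namely $\delta_{S_\theta}(f(x),f(y))\leq 4\min\{\rho_{S_\theta}(f(x),f(y)),\dots\}\leq 4\rho_{S_\theta}(f(x),f(y))$; (ii) apply the Schwarz bound above to get $\delta_{S_\theta}(f(x),f(y))\leq 4c(K)\max\{\rho_{S_\theta}(x,y),\rho_{S_\theta}(x,y)^{1\slash K}\}$; (iii) bound $\rho_{S_\theta}(x,y)$ from above by $\delta_{S_\theta}(x,y)$ using the lower bound of Theorem~\ref{thm_finalfordeltaS}(3), which gives $\rho_{S_\theta}(x,y)\leq(\theta\slash(\pi\sin(\theta\slash2)))^2\delta_{S_\theta}(x,y)$; (iv) substitute and distribute the power $1\slash K$ over the product inside the second branch of the maximum, using that $(\theta\slash(\pi\sin(\theta\slash2)))^2\geq1$ for $\pi<\theta<2\pi$ (so raising to $1\slash K\leq1$ only shrinks it — care is needed about which way the monotonicity goes, but since the constant exceeds $1$ and $1\slash K\le 1$ we have $c^{1\slash K}\le c$, so we keep the looser of the two to form a clean $\max$). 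Collecting the constants produces exactly
\begin{align*}
\delta_{S_\theta}(f(x),f(y))\leq 4c(K)\max\left\{\left(\frac{\theta}{\pi\sin(\theta\slash2)}\right)^2\delta_{S_\theta}(x,y),\left(\frac{\theta}{\pi\sin(\theta\slash2)}\right)^{2\slash K}\delta_{S_\theta}(x,y)^{1\slash K}\right\}.
\end{align*}

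The only real subtlety — the step I expect to need the most care — is the bookkeeping with the $\max$ and the exponent $1\slash K$: one must verify that the factor $(\theta\slash(\pi\sin(\theta\slash2)))^{2}$, when the $1\slash K$ power is applied to the term $\rho_{S_\theta}(x,y)^{1\slash K}$ after substituting $\rho_{S_\theta}(x,y)\le(\theta\slash(\pi\sin(\theta\slash2)))^2\delta_{S_\theta}(x,y)$, indeed contributes the stated exponent $2\slash K$ and not $2$. This is immediate once one writes $\rho_{S_\theta}(x,y)^{1\slash K}\le\big((\theta\slash(\pi\sin(\theta\slash2)))^2\delta_{S_\theta}(x,y)\big)^{1\slash K}=(\theta\slash(\pi\sin(\theta\slash2)))^{2\slash K}\delta_{S_\theta}(x,y)^{1\slash K}$, since all quantities are nonnegative. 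Everything else is a direct citation of Theorems~\ref{thm_finalfordeltaS} and~\ref{thm_schforqr}, so no genuine obstacle arises.
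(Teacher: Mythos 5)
Your proposal is correct and follows exactly the route the paper intends: bound $\delta_{S_\theta}(f(x),f(y))\leq 4\rho_{S_\theta}(f(x),f(y))$ by Theorem~\ref{thm_finalfordeltaS}(3), apply the Schwarz-type bound of Theorem~\ref{thm_schforqr}(3) (valid since $S_\theta$ is simply connected), and then replace $\rho_{S_\theta}(x,y)$ by $(\theta\slash(\pi\sin(\theta\slash2)))^{2}\delta_{S_\theta}(x,y)$ via the lower bound in Theorem~\ref{thm_finalfordeltaS}(3), with the exponent $2\slash K$ arising exactly as you note from monotonicity of $t\mapsto t^{1\slash K}$. The momentary detours about "wrong direction" and about $c^{1\slash K}\leq c$ are harmless and do not affect the final, correct chain of inequalities.
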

\begin{proof}
Follows from Theorems \ref{thm_finalfordeltaS}(3) and \ref{thm_schforqr}(3).
\end{proof}

\begin{corollary}\label{cor1_qrNonConvexSector}
If $S_\theta$ is a sector with angle $\pi<\theta<2\pi$ and $f:S_\theta\to S_\theta$ is a non-constant $K$-quasiregular mapping, then
\begin{align*}
{\rm th}\frac{\delta_{S_\theta}(f(x),f(y))}{4}
\leq c(K)\frac{\theta}{\pi}\left(2{\rm th}\frac{\delta_{S_\theta}(x,y)}{2}\right)^{1\slash K}
\end{align*}
for all $x,y\in S_\theta$.
\end{corollary}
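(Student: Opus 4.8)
The plan is to sandwich $\delta_{S_\theta}$ between expressions in $\rho_{S_\theta}$ by means of Theorem \ref{thm_finalfordeltaS}(3), to transport the quasiregular distortion estimate onto the half-plane $\uhp^2$ where Theorem \ref{thm_schforqr} applies, and then to reassemble the pieces. Throughout I abbreviate $\rho=\rho_{S_\theta}(x,y)$, $\rho'=\rho_{S_\theta}(f(x),f(y))$, $\delta=\delta_{S_\theta}(x,y)$ and $\delta'=\delta_{S_\theta}(f(x),f(y))$.

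First I would extract the two needed consequences of Theorem \ref{thm_finalfordeltaS}(3): applied to $f(x),f(y)$ its upper bound reads $\delta'\le 4{\rm arth}\bigl(\tfrac{\theta}{\pi}{\rm th}\tfrac{\rho'}{2}\bigr)$, that is,
\[
{\rm th}\frac{\delta'}{4}\le\frac{\theta}{\pi}\,{\rm th}\frac{\rho'}{2},
\]
while applied to $x,y$ its lower bound $\delta\ge 2{\rm arth}\tfrac{{\rm th}(\rho/2)}{2}$ rearranges to ${\rm th}\tfrac{\rho}{2}\le 2\,{\rm th}\tfrac{\delta}{2}$. Hence it suffices to prove ${\rm th}\tfrac{\rho'}{2}\le c(K)\bigl({\rm th}\tfrac{\rho}{2}\bigr)^{1/K}$: raising ${\rm th}\tfrac{\rho}{2}\le 2\,{\rm th}\tfrac{\delta}{2}$ to the power $1/K$ and chaining with the two displayed inequalities gives exactly the claimed inequality.

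To estimate $\rho'$ I would conjugate $f$ to the half-plane. The power map $h(z)=z^{\pi/\theta}$ is a conformal bijection $S_\theta\to\uhp^2$ and, by the defining formula for $\rho_{S_\theta}$, a hyperbolic isometry; consequently $g:=h\circ f\circ h^{-1}$ is a non-constant $K$-quasiregular self-map of $\uhp^2$ (pre- and post-composition with conformal maps leaves $K_I$ and $K_O$ unchanged), with $\rho'=\rho_{\uhp^2}(g(h(x)),g(h(y)))$ and $\rho=\rho_{\uhp^2}(h(x),h(y))$. Theorem \ref{thm_schforqr}(1) with $n=2$, $\lambda_2=4$ and $\alpha=K_I(g)^{1/(1-2)}=1/K_I(g)\ge 1/K$ then gives
\[
{\rm th}\frac{\rho'}{2}\le\varphi_{K,2}\Bigl({\rm th}\frac{\rho}{2}\Bigr)\le 4^{1-\alpha}\Bigl({\rm th}\frac{\rho}{2}\Bigr)^{\alpha}\le 4^{1-1/K}\Bigl({\rm th}\frac{\rho}{2}\Bigr)^{1/K},
\]
the last inequality since ${\rm th}\tfrac{\rho}{2}\in[0,1)$ and $\alpha\ge 1/K$.

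The step I expect to need the most care is the constant. The chain above produces the coefficient $4^{1-1/K}$, and to land on the stated $c(K)$ one still has to know that $4^{1-1/K}\le c(K)$ for every $K\ge1$ (with equality at $K=1$); equivalently $\varphi_{K,2}(r)\le c(K)\,r^{1/K}$ on $(0,1)$, which is a standard property of the distortion function and of $c(K)=2{\rm arth}(\varphi_{K,2}({\rm th}(1/2)))$ and can also be read off from Theorem \ref{thm_schforqr}(3) by evaluating its bound on the extremal radial stretch of $\B^2$. With this in place, combining the displays yields ${\rm th}\tfrac{\delta'}{4}\le\tfrac{\theta}{\pi}\,4^{1-1/K}\bigl(2\,{\rm th}\tfrac{\delta}{2}\bigr)^{1/K}\le c(K)\tfrac{\theta}{\pi}\bigl(2\,{\rm th}\tfrac{\delta}{2}\bigr)^{1/K}$. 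Relative to Corollary \ref{cor0_qrNonConvexSector}, the only genuinely new idea is to carry ${\rm th}(\rho/2)$, rather than $\rho$ itself, through the whole argument, so that nothing is wasted in returning from $\uhp^2$ to the sector.
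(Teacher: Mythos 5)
Your overall skeleton is sound and largely parallel to the paper's: both proofs use Theorem \ref{thm_finalfordeltaS}(3) on the two ends (the upper bound for $f(x),f(y)$ giving ${\rm th}(\delta'/4)\le\tfrac{\theta}{\pi}{\rm th}(\rho'/2)$, and the lower bound for $x,y$ giving ${\rm th}(\rho/2)\le 2\,{\rm th}(\delta/2)$), and the reduction to ${\rm th}(\rho'/2)\le c(K)\bigl({\rm th}(\rho/2)\bigr)^{1/K}$ is correct. The gap is in how you establish that reduced inequality. Using Theorem \ref{thm_schforqr}(1) with $\lambda_2=4$ you obtain the constant $4^{1-1/K}$, and you then assert $4^{1-1/K}\le c(K)$ as ``a standard property.'' It is not: the standard fact in this circle of ideas is the upper bound $\varphi_{K,2}(r)\le 4^{1-1/K}r^{1/K}$, which says nothing about a lower bound for $c(K)=2\,{\rm arth}(\varphi_{K,2}({\rm th}(1/2)))$. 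Your suggested derivation from Theorem \ref{thm_schforqr}(3) applied to the radial stretch also fails quantitatively: for the stretch $z\mapsto z|z|^{1/K-1}$ of $\B^2$ with one point at the origin, letting the hyperbolic distance tend to $0$ in (3) yields only $c(K)\ge 2^{1-1/K}$, not $4^{1-1/K}$. Nor do the elementary bounds on $\varphi_{K,2}$ rescue this near $K=1$: both $c(K)$ and $4^{1-1/K}$ equal $1$ at $K=1$, so the claim there is a comparison of growth rates ($c'(1)\approx 2.18$ versus $\ln 4\approx 1.39$), and crude lower bounds such as $\varphi_{K,2}(r)\ge (r/4)^{1/K}$ or $\varphi_{K,2}(r)\ge r^{1/K}$ give strictly smaller derivatives at $K=1$. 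Numerically the inequality $4^{1-1/K}\le c(K)$ does appear to hold, but as written you have not proved it, so your constant is not justified.

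The paper sidesteps this entirely: it keeps $c(K)$ throughout by using Theorem \ref{thm_schforqr}(3) directly, $\rho'\le c(K)\max\{\rho,\rho^{1/K}\}$, converts $\rho$ via Theorem \ref{thm_rhos}(3) into $2\,{\rm arth}(s_{S_\theta}(x,y))$, and then invokes the technical lemma from \cite[Thm 5.3, p. 11]{sqm}, namely ${\rm th}\bigl(\tfrac{C}{2}\max\{2\,{\rm arth}(t),(2\,{\rm arth}(t))^{1/K}\}\bigr)\le Ct^{1/K}$ for $C\ge1$, $K\ge1$, before finishing with $s_{S_\theta}(x,y)\le 2\,{\rm th}(\delta_{S_\theta}(x,y)/2)$ from Corollary \ref{cor_deltasj}(2). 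If you replace your use of Theorem \ref{thm_schforqr}(1) by (3) together with that cited lemma (applied with $t={\rm th}(\rho/2)$ or with $t=s_{S_\theta}(x,y)$), your argument closes without the unproven comparison $4^{1-1/K}\le c(K)$; alternatively, you would need to supply an actual proof of that comparison, which is a nontrivial extremal-function estimate and not something you can wave through.
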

\begin{proof}
By Theorems \ref{thm_finalfordeltaS}(3), \ref{thm_schforqr}(3) and \ref{thm_rhos}(3),
\begin{align*}
{\rm th}\frac{\delta_{S_\theta}(f(x),f(y))}{4}
&\leq\frac{\theta}{\pi}{\rm th}\frac{\rho_{S_\theta}(f(x),f(y))}{2}
\leq\frac{\theta}{\pi}{\rm th}\left(\frac{c(K)}{2}\max\{\rho_{S_\theta}(x,y),\rho_{S_\theta}(x,y)^{1\slash K}\}\right)\\
&\leq\frac{\theta}{\pi}{\rm th}\left(\frac{c(K)}{2}\max\{2{\rm arth}(s_{S_\theta}(x,y)),(2{\rm arth}(s_{S_\theta}(x,y)))^{1\slash K}\}\right).
\end{align*}
It follows from \cite[Thm 5.3, p. 11]{sqm} that
\begin{align*}
{\rm th}\left(\frac{C}{2}\max\{2{\rm arth}(t),(2{\rm arth}(t))^{1\slash K}\}\right)\leq Ct^{1\slash K}  
\end{align*}
for all $0<t<1$, $K\geq1$ and $C\geq1$. Consequently,
\begin{align*}
{\rm th}\frac{\delta_{S_\theta}(f(x),f(y))}{4}
\leq c(K)\frac{\theta}{\pi}s_{S_\theta}(x,y)^{1\slash K} \,.
\end{align*}
By Corollary \ref{cor_deltasj}(2), $s_{S_\theta}(x,y)\leq2{\rm th}(\delta_{S_\theta}(x,y)\slash2)$, so the result follows.
\end{proof}

\begin{remark}
Neither Corollary \ref{cor0_qrNonConvexSector} nor Corollary \ref{cor1_qrNonConvexSector} offers a bound for the distortion that is always better than the result of the other corollary, which can be seen by studying the case where $\theta\to\pi^-$ and $c(K)=K=1$ for varying points $x,y\in S_\theta$.
\end{remark}

\begin{corollary}\label{cor_qrBtoSector}
If $S_\theta$ is a sector with angle $0<\theta<2\pi$ and $f:\B^2\to S_\theta$ is a non-constant $K$-quasiregular mapping, then, for all $x,y\in\B^2$,
\begin{align*}
&\delta_{S_\theta}(f(x),f(y))
\leq c(K)\min\left\{2,\left(\frac{\pi\sin(\theta\slash2)}{\theta}\right)^2\right\}\max\{\delta_{\B^2}(x,y),\delta_{\B^2}(x,y)^{1\slash K}\},\\
&\text{if}\quad0<\theta\leq\pi,\quad\text{and}\\
&\delta_{S_\theta}(f(x),f(y))
\leq4c(K)\max\{\delta_{\B^2}(x,y),\delta_{\B^2}(x,y)^{1\slash K}\}
\quad\text{if}\quad\pi<\theta<2\pi.
\end{align*}
\end{corollary}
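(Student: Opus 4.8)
The plan is to reduce the statement about a quasiregular map $f:\B^2\to S_\theta$ to the already-proven facts: Theorem~\ref{thm_seitminvariant}, which identifies $\delta_{\B^2}$ with the hyperbolic metric $\rho_{\B^2}$, the distortion estimate Theorem~\ref{thm_schforqr}(3) for the hyperbolic metric under $K$-quasiregular mappings, and the sector bounds of Theorem~\ref{thm_finalfordeltaS}. The key observation is that all three domains in play ($\B^2$, $S_\theta$, and implicitly $\uhp^2$) are simply connected planar domains, so Theorem~\ref{thm_schforqr}(3) applies to $f$ viewed as a map $\B^2\to f(\B^2)\subset S_\theta$ after conformally transporting to the unit disk.

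First I would rewrite the left-hand side using Theorem~\ref{thm_finalfordeltaS}: since $f(x),f(y)\in S_\theta$, part~(1)--(2) gives $\delta_{S_\theta}(f(x),f(y))\leq \min\{2,(\pi\sin(\theta\slash2)\slash\theta)^2\}\,\rho_{S_\theta}(f(x),f(y))$ when $0<\theta\leq\pi$, and part~(3) gives $\delta_{S_\theta}(f(x),f(y))\leq 4\rho_{S_\theta}(f(x),f(y))$ when $\pi<\theta<2\pi$ (discarding the sharper ${\rm arth}$ term, which is not needed here). Second, I would apply Theorem~\ref{thm_schforqr}(3) in the form valid for simply connected planar domains to bound $\rho_{S_\theta}(f(x),f(y))\leq c(K)\max\{\rho_{\B^2}(x,y),\rho_{\B^2}(x,y)^{1\slash K}\}$. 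Third, I would invoke Theorem~\ref{thm_seitminvariant} to replace $\rho_{\B^2}(x,y)$ by $\delta_{\B^2}(x,y)$ on the right-hand side, and combine the three steps; the constant $c(K)$ passes through the $\max$ since $c(K)\geq1$, and the sector constant multiplies out in front. This yields exactly the two displayed inequalities.

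The only genuine subtlety is the applicability of Theorem~\ref{thm_schforqr}(3): that statement is phrased for $G,G'\in\{\uhp^n,\B^n\}$, but the remark at the end of Theorem~\ref{thm_schforqr} explicitly extends the two-dimensional case~(3) to any two simply connected planar domains via conformal uniformization, and $S_\theta$ is simply connected. So I would cite that extension directly rather than re-deriving it. A second minor point is that one should take $f(\B^2)$, not $S_\theta$, as the target codomain when applying the distortion inequality; but since $f(\B^2)\subset S_\theta$ and the hyperbolic metric is monotone under inclusion is \emph{not} what is used here — rather, the distortion bound is stated for $f:G\to f(G)\subset G'$ directly, so one simply takes $G=\B^2$, $G'=S_\theta$, and the inequality is about $\rho_{G'}(f(x),f(y))=\rho_{S_\theta}(f(x),f(y))$, which is precisely the quantity appearing after the first step. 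Hence there is no real obstacle; the proof is a three-line chaining of cited results, and I expect the write-up to read "Follows from Theorems~\ref{thm_finalfordeltaS}, \ref{thm_schforqr}(3) and \ref{thm_seitminvariant}," perhaps with the two-case split spelled out explicitly.
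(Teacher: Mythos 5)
Your proposal is correct and follows essentially the same route as the paper, whose proof is simply the citation "Follows from Theorems \ref{thm_seitminvariant}, \ref{thm_finalfordeltaS} and \ref{thm_schforqr}(3)"; your chaining (sector bound on $\delta_{S_\theta}$ via Theorem \ref{thm_finalfordeltaS}, distortion of $\rho$ via the simply connected extension of Theorem \ref{thm_schforqr}(3), then $\rho_{\B^2}=\delta_{\B^2}$ via Theorem \ref{thm_seitminvariant}) is exactly that argument spelled out.
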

\begin{proof}
Follows from Theorems \ref{thm_seitminvariant}, \ref{thm_finalfordeltaS} and \ref{thm_schforqr}(3). 
\end{proof}

\begin{corollary}\label{cor_6.13}
If $S_\theta$ is a sector with angle $0<\theta\leq\pi$ and $f:\B^2\to S_\theta$ is a non-constant $K$-quasiregular mapping, then for all $x\in\B^2$ such that $|x|\geq(e-1)\slash(e+1)$,
\begin{align*}
|f(x)|\leq|f(0)|\left(\frac{1+|x|}{1-|x|}\right)^{c(K)u(\theta)}
\quad\text{with}\quad
u(\theta)=\min\left\{2,\left(\frac{\pi\sin(\theta\slash2)}{\theta}\right)^2\right\}.   
\end{align*}
\end{corollary}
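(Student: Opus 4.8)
The plan is to feed $y=0$ into the distortion estimate of Corollary~\ref{cor_qrBtoSector} and to combine the resulting upper bound for $\delta_{S_\theta}(f(x),f(0))$ with an elementary lower bound obtained from the $j$-metric; the role of the hypothesis $|x|\ge(e-1)\slash(e+1)$ is precisely to get rid of the exponent $1\slash K$ appearing on the right-hand side of Corollary~\ref{cor_qrBtoSector}.

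First I would record two elementary facts about the point $x$. By Theorem~\ref{thm_seitminvariant} together with ${\rm th}(\rho_{\B^2}(0,x)\slash2)=|x|$ one has $\delta_{\B^2}(0,x)=\rho_{\B^2}(0,x)=\log\frac{1+|x|}{1-|x|}$, and an immediate computation shows that this quantity is $\ge1$ exactly when $|x|\ge(e-1)\slash(e+1)$. Consequently, under the standing hypothesis the number $t:=\delta_{\B^2}(0,x)$ satisfies $t\ge1$, hence $t^{1\slash K}\le t$ for all $K\ge1$ and $\max\{t,t^{1\slash K}\}=t=\log\frac{1+|x|}{1-|x|}$. Applying Corollary~\ref{cor_qrBtoSector} in the case $0<\theta\le\pi$ with $y=0$, and recalling that $\min\{2,(\pi\sin(\theta\slash2)\slash\theta)^2\}=u(\theta)$, we get
\begin{align*}
\delta_{S_\theta}(f(x),f(0))\le c(K)\,u(\theta)\log\frac{1+|x|}{1-|x|}.
\end{align*}

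It then remains to bound $\delta_{S_\theta}(f(x),f(0))$ from below by $\log(|f(x)|\slash|f(0)|)$. If $|f(x)|\le|f(0)|$ there is nothing to prove, since $(1+|x|)\slash(1-|x|)\ge1$ and $c(K)u(\theta)>0$; so assume $|f(x)|>|f(0)|$. Because the origin lies on $\partial S_\theta$, we have $d_{S_\theta}(f(0))\le|f(0)|$, and therefore $\min\{d_{S_\theta}(f(x)),d_{S_\theta}(f(0))\}\le|f(0)|$. Using this, the reverse triangle inequality $|f(x)-f(0)|\ge|f(x)|-|f(0)|>0$, the monotonicity of $s\mapsto\log(1+s)$, and the inequality $j_{S_\theta}\le\delta_{S_\theta}$ from Theorem~\ref{thm_deltaj}, we obtain
\begin{align*}
\delta_{S_\theta}(f(x),f(0))\ge j_{S_\theta}(f(x),f(0))\ge\log\left(1+\frac{|f(x)|-|f(0)|}{|f(0)|}\right)=\log\frac{|f(x)|}{|f(0)|}.
\end{align*}
Chaining the two displayed inequalities gives $\log\frac{|f(x)|}{|f(0)|}\le c(K)u(\theta)\log\frac{1+|x|}{1-|x|}$, and exponentiating yields the claim. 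The argument is essentially routine; the only points needing care are the verification that the threshold $|x|\ge(e-1)\slash(e+1)$ is exactly the condition $\rho_{\B^2}(0,x)\ge1$ which allows one to drop the $1\slash K$-power in Corollary~\ref{cor_qrBtoSector}, and the harmless case distinction according to whether $|f(x)|\le|f(0)|$.
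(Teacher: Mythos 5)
Your proof is correct and follows essentially the same route as the paper: the lower bound $\log(|f(x)|/|f(0)|)\leq j_{S_\theta}(f(x),f(0))\leq\delta_{S_\theta}(f(x),f(0))$ via $d_{S_\theta}(f(0))\leq|f(0)|$, the upper bound via the quasiregular distortion with the factor $c(K)u(\theta)$, and the observation that $|x|\geq(e-1)/(e+1)$ is exactly $\log\frac{1+|x|}{1-|x|}\geq1$, which kills the $1/K$-power. The only cosmetic differences are that you route the upper bound through Corollary \ref{cor_qrBtoSector} (itself an immediate consequence of Theorems \ref{thm_seitminvariant}, \ref{thm_finalfordeltaS} and \ref{thm_schforqr}(3), which the paper cites directly) and add a harmless case distinction that the paper's use of $|f(x)|\leq|f(x)-f(0)|+|f(0)|$ avoids.
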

\begin{proof}
By the triangle inequality, Theorems \ref{thm_deltaj}, \ref{thm_finalfordeltaS} and \ref{thm_schforqr}(3), and \cite[(4.14), p. 55]{hkvbook}, 
\begin{align*}
\log\frac{|f(x)|}{|f(0)|}
&\leq\log\frac{|f(x)-f(0)|+|f(0)|}{|f(0)|}
=\log\left(1+\frac{|f(x)-f(0)|}{|f(0)|}\right)\\
&\leq\log\left(1+\frac{|f(x)-f(0)|}{\min\{d_{S_\theta}(f(x)),d_{S_\theta}(f(0))\}}\right)
=j_{S_\theta}(f(x),f(0))
\leq\delta_{S_\theta}(f(x),f(0))\\
&\leq u(\theta)\rho_{S_\theta}(f(x),f(0))
\leq c(K)u(\theta)\max\{\rho_{\B^2}(x,0),\rho_{\B^2}(x,0)^{1\slash K}\}\\
&=c(K)u(\theta)\max\left\{\log\frac{1+|x|}{1-|x|},\left(\log\frac{1+|x|}{1-|x|}\right)^{1\slash K}\right\},
\end{align*}
and, if the inequality
\begin{align*}
\log\frac{1+|x|}{1-|x|}\geq\left(\log\frac{1+|x|}{1-|x|}\right)^{1\slash K}
\quad\Leftrightarrow\quad
\frac{1+|x|}{1-|x|}\geq e
\quad\Leftrightarrow\quad
|x|\geq\frac{e-1}{1+e}
\end{align*}
holds, then we will have
\begin{align*}
\log\frac{|f(x)|}{|f(0)|}
\leq c(K)u(\theta)\log\frac{1+|x|}{1-|x|}
\quad\Leftrightarrow\quad
|f(x)|\leq|f(0)|\left(\frac{1+|x|}{1-|x|}\right)^{c(K)u(\theta)}.
\end{align*}
\end{proof}

\begin{remark}
Note that Corollary \ref{cor_6.13} refines \cite[Thm 16.19(1), p. 306]{hkvbook} when $n=2$.
\end{remark}
\section{M\"obius metric in polygon}\label{sct7}

In this section, we will introduce a few open questions related to the M\"obius metric inside a polygon domain. Especially, we are interested in the inequality between the M\"obius metric and the hyperbolic metric defined in a polygon. All the computational findings and Figure \ref{fig3} have been made with MATLAB programs from \cite{nrv1}.

Even though the inequality $\rho_{S_\theta}(x,y)\leq\delta_{S_\theta}(x,y)$ holds in all convex sectors by Theorem \ref{thm_finalfordeltaS} and these metrics are equivalent in such convex domains as the unit disk and the upper half-plane, our computer experiments verify that neither of metrics is always greater than or equal to the other in all polygonal domains, not even in all convex polygons.

\begin{conjecture}\label{conj_convexIne}
If $G\subsetneq\R^2$ is any bounded polygonal domain, there are always some points $x,y,u,v\in G$ such that $\rho_G(x,y)<\delta_G(x,y)$ and $\rho_G(u,v)>\delta_G(u,v)$.
\end{conjecture}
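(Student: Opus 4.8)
The plan is to exhibit both strict inequalities using points located near one and the same vertex of $G$. Since $G$ is a bounded convex polygon, it has a vertex $w$ whose interior angle $\theta$ lies in $(0,\pi)$; fix such a $w$ and let $S$ be the infinite sector with apex $w$ bounded by the two lines carrying the edges of $G$ at $w$. Convexity gives, for all sufficiently small $r_0>0$, the chain $S\cap B^2(w,r_0)=G\cap B^2(w,r_0)\subset G\subset S$: the equality holds because near $w$ the boundary of $G$ consists only of the two edges issuing from $w$, and the last inclusion holds because $G$ is the intersection of the closed half-planes determined by all of its edges while $S$ is the intersection of just the two coming from the edges at $w$. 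After a rigid motion we identify $S$ with $S_\theta$.

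To produce $x,y$ with $\rho_G(x,y)<\delta_G(x,y)$ I would blow up at $w$. Pick a small $k\in(0,1)$ and set $x_0=e^{(1-k)\theta i/2}$, $y_0=e^{(1+k)\theta i/2}\in S_\theta$. By Lemma~\ref{lem_Qktheta}, $\delta_{S_\theta}(x_0,y_0)=Q(k,\theta)\,\rho_{S_\theta}(x_0,y_0)$, and $\lim_{k\to0^+}Q(k,\theta)=\theta\sin(\theta/2)/(2\pi\sin^2(\theta/4))$, which exceeds $1$ when $\theta<\pi$ (it equals $1$ at $\theta=\pi$ and is strictly decreasing in $\theta$, by Proposition~\ref{prop_triginc}(3)); so we may fix $k$ with $\delta_{S_\theta}(x_0,y_0)>\rho_{S_\theta}(x_0,y_0)$. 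For $\epsilon>0$ let $x^\epsilon,y^\epsilon\in S$ correspond to $\epsilon x_0,\epsilon y_0$ under the identification of $S$ with $S_\theta$. Because the cross-ratio is a similarity invariant, restricting the supremum in $\delta_G(x^\epsilon,y^\epsilon)$ to boundary points on the two edges at $w$ and rescaling by $1/\epsilon$ turns it into the supremum of $|a,x_0,b,y_0|$ over two bounded sub-rays of $\partial S_\theta$; by Theorem~\ref{thm_deltaPointslessthanpi} the supremum over all of $\partial S_\theta$ is attained at the finite points $a=1$, $b=e^{\theta i}$, so for every sufficiently small $\epsilon$ this restricted supremum already equals $\delta_{S_\theta}(x_0,y_0)$, whence $\delta_G(x^\epsilon,y^\epsilon)\ge\delta_{S_\theta}(x_0,y_0)$. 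On the other hand, monotonicity of the hyperbolic metric and $G\cap B^2(w,r_0)\subset G$ give $\rho_G(x^\epsilon,y^\epsilon)\le\rho_{G\cap B^2(w,r_0)}(x^\epsilon,y^\epsilon)=\rho_{S_\theta\cap B^2(0,r_0/\epsilon)}(x_0,y_0)$, and the right-hand side decreases to $\rho_{S_\theta}(x_0,y_0)$ as $\epsilon\to0^+$ since $S_\theta\cap B^2(0,R)\uparrow S_\theta$. Fixing $\epsilon$ so small that this quantity drops below $\delta_{S_\theta}(x_0,y_0)$ yields $\rho_G(x^\epsilon,y^\epsilon)<\delta_G(x^\epsilon,y^\epsilon)$.

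To produce $u,v$ with $\rho_G(u,v)>\delta_G(u,v)$ I would fix any $u\in G$ and let $v=v_t=w+t\eta\to w$ along the inward bisector $\eta$ of the angle at $w$, so that $d_G(v_t)=t\sin(\theta/2)$ for small $t$. First, $G\subset S$ and monotonicity give $\rho_G(u,v_t)\ge\rho_S(u,v_t)$, and an elementary computation with the conformal map $z\mapsto(z-w)^{\pi/\theta}$ of $S$ onto $\uhp^2$ shows $\rho_S(u,v_t)=(\pi/\theta)\log(1/t)+O(1)$ as $t\to0^+$. Second, from $|a-b|\le|a-u|+|u-v_t|+|v_t-b|$ together with $|a-u|\ge d_G(u)$, $|v_t-b|\ge d_G(v_t)$ and the boundedness of $|u-v_t|$,
\[
|a,u,b,v_t|=\frac{|a-b|\,|u-v_t|}{|a-u|\,|v_t-b|}\ \le\ \frac{C_u}{d_G(v_t)}\qquad\text{for all }a,b\in\partial G ,
\]
so $\delta_G(u,v_t)=\sup_{a,b\in\partial G}\log(1+|a,u,b,v_t|)\le\log(1/t)+O(1)$. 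Subtracting, $\rho_G(u,v_t)-\delta_G(u,v_t)\ge(\pi/\theta-1)\log(1/t)-O(1)\to+\infty$ because $\theta<\pi$, so $\rho_G(u,v_t)>\delta_G(u,v_t)$ for all small $t$.

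The two asymptotic estimates for $\rho$ are routine; the step that needs the most care is the blow-up in the first half — one must check that the supremum defining $\delta_{S_\theta}(x_0,y_0)$ is realised by boundary points at bounded distance from the apex (so the rescaled, truncated edges of $G$ suffice) and that the remaining, far-away part of $\partial G$ cannot interfere. Theorem~\ref{thm_deltaPointslessthanpi} settles the first point, and the second is avoided by bounding $\rho_G$ from above through the inclusion $G\cap B^2(w,r_0)\subset G$ rather than trying to bound $\delta_G$ from above directly. Since both witness pairs may be taken near the same vertex, the argument works for every bounded convex polygonal domain.
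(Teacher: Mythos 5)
The paper itself offers no proof of this statement; it is left as a conjecture supported only by the contour-plot experiments of Section \ref{sct7} (Figure \ref{fig2}), so there is nothing to compare against line by line, and your argument, which I find correct, would actually settle it. You localize at a vertex $w$ of interior angle $\theta\in(0,\pi)$ and reuse the paper's sector machinery: for $\rho_G<\delta_G$ you take the symmetric points of Lemma \ref{lem_Qktheta}, note that $\lim_{k\to0^+}Q(k,\theta)=\theta\sin(\theta/2)/(2\pi\sin^2(\theta/4))>1$ strictly for $\theta<\pi$ (this strictness does follow from the proof of Proposition \ref{prop_triginc}(3), where $\sin t<t$ on $(0,2\pi)$), and then use Theorem \ref{thm_deltaPointslessthanpi} together with Corollary \ref{cor_deltaInS} to see that the extremal boundary points lie at distance $|x_0|$ from the apex, so after scaling by $\epsilon$ they sit on the two edges of $G$ and give $\delta_G(x^\epsilon,y^\epsilon)\ge\delta_{S_\theta}(x_0,y_0)$, while $\rho_G$ is bounded above through the truncated sector; for $\rho_G>\delta_G$ your direct cross-ratio bound $\delta_G(u,v_t)\le\log(1/t)+O(1)$ against $\rho_G(u,v_t)\ge\rho_{S}(u,v_t)=(\pi/\theta)\log(1/t)+O(1)$ works precisely because $\pi/\theta>1$ at a convex corner (note that the paper's bound $\delta_G\le2j_G$ from Theorem \ref{thm_deltaj} would not suffice once $\theta\ge\pi/2$, so your sharper constant $1$ in front of $\log(1/t)$ is essential). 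The only ingredient not available in the paper is the exhaustion statement $\rho_{S_\theta\cap B^2(0,R)}(x_0,y_0)\to\rho_{S_\theta}(x_0,y_0)$ as $R\to\infty$; this is a standard monotone-convergence property of hyperbolic densities under increasing domains (or can be checked by an explicit conformal map of the circular sector), but it should be cited or proved if this is written up. A pleasant byproduct: your first half only uses the local coincidence of $\partial G$ with the sector boundary near $w$, so it applies at any vertex of angle less than $\pi$ of an arbitrary simple polygon, which is relevant to the paper's later conjecture about non-convex polygonal domains.
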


However, the values of these two metrics do not differ very much from each other in the domain $G_k$ shaped like a regular convex $k$-gon with $k$ vertices $e^{p2\pi i/k}$, $p=0,1,...,k-1$, especially as the value of $k$ grows and this domain resembles more and more the unit disk, where these metrics are equivalent.

\begin{conjecture}
If $G_k$ is the above regular $k$-gon and $x,y\in\cap G_k$ are distinct points, then
\begin{align*}
\lim_{k\to\infty}(\delta_{G_k}(x,y)\slash\rho_{G_k}(x,y))\to1.    
\end{align*}
\end{conjecture}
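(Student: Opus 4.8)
The plan is to reduce the general $k$-gon statement to the local behaviour of a conformal map near an interior point, combined with a compactness argument over the (compact) set of pairs $(x,y)$. First I would fix two distinct points $x,y$ and a large $k$, and normalize so that $G_k$ is the regular $k$-gon inscribed in the unit circle. The key geometric input is that as $k\to\infty$ the domains $G_k$ exhaust $\B^2$ in the sense of Carath\'eodory kernel convergence: every compact subset of $\B^2$ lies in $G_k$ for $k$ large, and $\partial G_k\to S^1$ in the Hausdorff metric, with $d_{G_k}(z)\to d_{\B^2}(z)=1-|z|$ uniformly on compact subsets of $\B^2$. From this, the Carath\'eodory kernel theorem gives that the Riemann maps $\varphi_k:\B^2\to G_k$ (suitably normalized, say $\varphi_k(0)=0$, $\varphi_k'(0)>0$) converge locally uniformly to the identity on $\B^2$, hence $\rho_{G_k}(x,y)=\rho_{\B^2}(\varphi_k^{-1}(x),\varphi_k^{-1}(y))\to\rho_{\B^2}(x,y)$.

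The remaining task is to show $\delta_{G_k}(x,y)\to\delta_{\B^2}(x,y)=\rho_{\B^2}(x,y)$. For the lower bound one uses monotonicity of the M\"obius metric under inclusion of domains together with the exhaustion: since every compact $K\Subset\B^2$ is contained in $G_k$ for large $k$, one gets $\delta_{G_k}\ge\delta_{\B^2}$ is \emph{false} in general (larger domain gives smaller metric), so instead I would argue directly from the definition. Write $\delta_{G_k}(x,y)=\sup_{a,b\in\partial G_k}\log(1+|a,x,b,y|)$ and $\delta_{\B^2}(x,y)=\sup_{a,b\in S^1}\log(1+|a,x,b,y|)$. The cross-ratio $(a,b)\mapsto|a,x,b,y|$ is continuous on $\overline{\R}^2\times\overline{\R}^2$ away from the diagonal involving $x,y$, and since $x,y$ are interior points bounded away from both $\partial G_k$ and $S^1$, it is uniformly continuous on a neighbourhood of $(\partial G_k\cup S^1)^2$. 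Because $\partial G_k\to S^1$ in Hausdorff distance, each $(a,b)\in(\partial G_k)^2$ is within $o(1)$ of a point of $(S^1)^2$ and vice versa, so $\delta_{G_k}(x,y)\to\delta_{\B^2}(x,y)$. Combining with the hyperbolic convergence, $\delta_{G_k}(x,y)/\rho_{G_k}(x,y)\to\rho_{\B^2}(x,y)/\rho_{\B^2}(x,y)=1$.

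I would also record the minor subtlety that the statement as written (``$x,y\in\cap G_k$'') requires $x,y$ to lie in every $G_k$, in particular in $G_3$, so they are automatically compactly contained in $\B^2$ and the uniform estimates above apply with constants independent of $k$ once $k$ is large; alternatively one may simply demand $k$ large enough that $x,y\in G_k$. The main obstacle I anticipate is making the cross-ratio supremum convergence fully rigorous: one must ensure the near-optimal boundary points $(a,b)$ do not escape to a configuration where $|a,x,b,y|$ is singular, which is handled by noting that the supremum over $S^1$ is attained (by compactness, and in fact by Proposition \ref{prop_abinSH} it is an explicit pair of antipodal-type points), so the optimizers stay in a fixed compact set on which the cross-ratio is Lipschitz in $(a,b)$ uniformly for $k$ large. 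Everything else is a routine application of Carath\'eodory kernel convergence of the uniformizing maps.
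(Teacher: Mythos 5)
This statement is one of the paper's conjectures: the authors give no proof, only the numerical evidence discussed in Section \ref{sct7}, so there is no argument of the paper to compare yours against, and your sketch should be judged on its own merits. On those merits it is essentially correct, and written out carefully it would upgrade the conjecture to a theorem. Two comments. (i) Your aside on monotonicity is backwards: the M\"obius metric \emph{is} domain-monotone, because the supremum in \eqref{def_delta} may equivalently be taken over all $a,b$ in the complement of $G$ (after a M\"obius map sending $x$ to $\infty$, the cross-ratio as a function of $a$ is proportional to a Euclidean distance, whose maximum over the compact complement is attained on the boundary), so a larger domain gives a smaller metric; since $G_k\subset\B^2$, this yields $\delta_{G_k}(x,y)\ge\delta_{\B^2}(x,y)=\rho_{\B^2}(x,y)$ (Theorem \ref{thm_seitminvariant}) for free, i.e.\ the lower bound you worried about is immediate. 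Your direct route via Hausdorff convergence of $\partial G_k$ to $S^1$ is nevertheless valid and gives both bounds: for fixed interior points $x\ne y$ the function $(a,b)\mapsto|a-b|\,|x-y|/(|a-x|\,|b-y|)$ is bounded and uniformly continuous on an annular neighbourhood of $S^1\times S^1$ (its denominators stay bounded below by about $1-|x|$ and $1-|y|$), while $\partial G_k$ lies in that annulus within Hausdorff distance $1-\cos(\pi/k)$ of $S^1$, so no ``escaping optimizer'' issue can arise and the attainment of the supremum on $S^1$ is not even needed. (ii) The Carath\'eodory kernel theorem is heavier machinery than the hyperbolic part requires: from $B^2(0,\cos(\pi/k))\subset G_k\subset\B^2$ and monotonicity of $\rho$ one gets $\rho_{\B^2}(x,y)\le\rho_{G_k}(x,y)\le\rho_{\B^2}(x/\cos(\pi/k),\,y/\cos(\pi/k))$ as soon as $|x|,|y|<\cos(\pi/k)$, and the right-hand side tends to $\rho_{\B^2}(x,y)$ by continuity. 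Since $x\ne y$ makes the common limit $\rho_{\B^2}(x,y)$ positive and finite, the quotient $\delta_{G_k}(x,y)/\rho_{G_k}(x,y)$ tends to $1$, as claimed; your observation that $x,y\in\cap_k G_k$ are fixed before letting $k\to\infty$ (so all constants may depend on $x,y$) is exactly the right caveat.
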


Furthermore, recall from Corollary \ref{cor_deltauni} that the inequality $\delta_G(x,y)\leq4\rho_G(x,y)$ holds for all points $x,y$ in a simply-connected domain $G$, and our computer tests suggests that this result can be improved in the case of polygonal domains.

\begin{figure}
    \centering
    \includegraphics[scale=0.8]{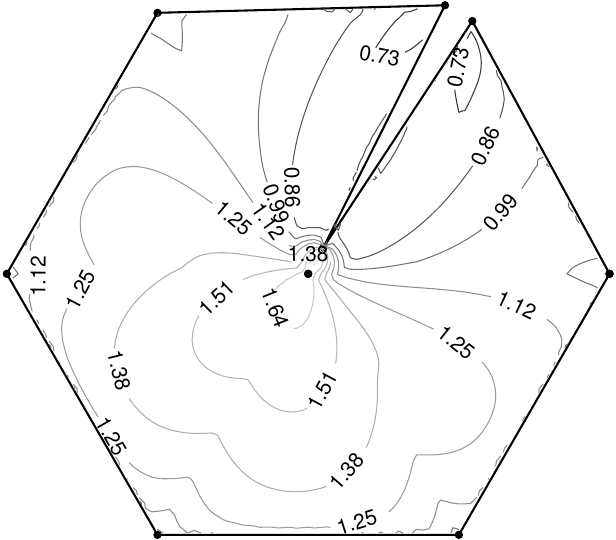}
    \caption{Contour plot of the quotient $\delta_G(x,y)\slash\rho_G(x,y)$, when $G$ is the polygon with vertices $1,e^{0.95\pi i\slash3},0.1e^{\pi i\slash3},e^{1.05\pi i\slash3},e^{2\pi i\slash3},e^{\pi i},e^{4\pi i\slash3},e^{5\pi i\slash3}$, the point $x$ is fixed as 0, and $y$ varies inside $G$.}
    \label{fig3}
\end{figure}

\begin{conjecture}\label{conj_cons}
For all polygonal domains $G\subsetneq\R^2$, the inequality $\rho_G(x,y)\slash 2\leq\delta_G(x,y)\leq2\rho_G(x,y)$ holds for all $x,y\in G$.
\end{conjecture}

Figure \ref{fig3} contains an example of a non-convex polygon where the values of the quotient $\delta_G(x,y)\slash\rho_G(x,y)$ vary at least on the interval $[0.73,1.64]$. Note that, based on our computer tests, the latter constant in the inequality of Conjecture \ref{conj_cons} can be replaced with a smaller one when considering only convex domains. Another interesting notion is that by Corollary \ref{cor_deltauni} the uniformity constant $A_G$ of a domain $G$ is fulfills
\begin{align*}
A_G\geq\rho_G(x,y)\slash(2\delta_G(x,y)   
\end{align*}
for all points $x,y$ in the domain $x,y$, so computing the maximum value of the quotient $\rho_G(x,y)\slash(2\delta_G(x,y)$ gives an lower bound for the uniformity constant $A_G$.

\def\cprime{$'$} \def\cprime{$'$} \def\cprime{$'$}
\providecommand{\bysame}{\leavevmode\hbox to3em{\hrulefill}\thinspace}
\providecommand{\MR}{\relax\ifhmode\unskip\space\fi MR }
\providecommand{\MRhref}[2]{%
  \href{http://www.ams.org/mathscinet-getitem?mr=#1}{#2}
}
\providecommand{\href}[2]{#2}

\end{document}